\documentclass[a4paper,10pt]{article}
\usepackage{authblk}

\usepackage[ngerman,english]{babel}
\usepackage[latin1]{inputenc}
\usepackage{csquotes}

\usepackage{amsfonts,amsmath,amsthm}
\usepackage{empheq}
\usepackage{enumitem}

\usepackage[titletoc,title]{appendix}

\usepackage[backend=bibtex8,url=true,doi=false,eprint=false,giveninits=true,isbn=false,style=numeric-comp,maxnames=99]{biblatex}
\makeatletter
\def\blx@maxline{77}
\makeatother

\bibliography{bibl.bib}
\AtEveryBibitem{\clearlist{language}}

\usepackage{cases}
\usepackage{mathabx}
\usepackage{bbm}
\usepackage{xfrac}
\usepackage{slashbox}

\usepackage{caption}
\usepackage
{subcaption}

\usepackage{xstring}

\usepackage{fancyhdr}
\usepackage{pgffor}

\usepackage{color}
\usepackage[colorinlistoftodos,textsize=small,backgroundcolor=white,bordercolor=magenta,linecolor=magenta]{todonotes}
\usepackage[colorlinks]{hyperref}
\definecolor{blue75}{rgb}{0,0,.75}
\definecolor{green75}{rgb}{0,.75,0}
\hypersetup{colorlinks=true, urlcolor=blue75,linkcolor=blue75,citecolor=green75,pdfstartview=FitB,bookmarksopen=true,bookmarksopenlevel=1}

\usepackage[a4paper, left=2.5cm, right=2.5cm, top=2.5cm,bottom=2cm]{geometry}
\usepackage{constants}
\newcommand{\parenthezises}[1]{\arabic{#1}}
\newconstantfamily{C}{
symbol=C,
format=\parenthezises,
}
\newconstantfamily{F}{
symbol=F,
format=\parenthezises,
}
\newconstantfamily{G}{
symbol=G,
format=\parenthezises,
reset={section}
}

\newconstantfamily{M}{
symbol=M,
format=\parenthezises,
reset={section}
}
\newconstantfamily{B}{
symbol=B,
format=\parenthezises,
reset={section}
}
\newconstantfamily{xi}{
symbol=\xi,
format=\parenthezises,
reset={section}
}
\usepackage{graphicx}
\usepackage{wrapfig}
\usepackage{figbib}
\allowdisplaybreaks

\usepackage[capitalise]{cleveref}

\crefdefaultlabelformat{{\it #2#1#3}}

\crefname{equation}{}{}

\crefname{enumi}{}{}
\crefformat{enumi}{#2#1#3}

\crefname{section}{{\it Section}}{{\it Sections}}
\crefname{subsection}{{\it Section}}{{\it Sections}}
\crefname{subsubsection}{{\it Paragraph}}{{\it Paragraphs}}
\crefname{table}{{\it Table}}{{\it Tables}}

\newtheorem{Theorem}{Theorem}[section]
\crefname{Theorem}{{\it Theorem}}{{\it Theorems}}
\newtheorem{Definition}[Theorem]{Definition}
\crefname{Definition}{{\it Definition}}{{\it Definitions}}
\newtheorem{Lemma}[Theorem]{Lemma}
\crefname{Lemma}{{\it Lemma}}{{\it Lemmas}}

\newtheorem{Corollary}[Theorem]{Corollary}
\crefname{Proposition}{{\it Proposition}}{{\it Propositions}}

\crefname{Assumptions}{{\it Assumptions}}{{\it Assumptions}}

\theoremstyle{definition}
\newtheorem{Remark}[Theorem]{Remark}
\crefname{Remark}{{\it Remark}}{{\it Remarks}}

\crefname{Notation}{{\it Notation}}{{\it Notations}}

\crefname{Example}{{\it Example}}{{\it Examples}}

\usepackage[version=4]{mhchem}

% \usepackage{refcheck}
% 
% \makeatletter
% \newcommand{\refcheckize}[1]{%
%   \expandafter\let\csname @@\string#1\endcsname#1%
%   \expandafter\DeclareRobustCommand\csname relax\string#1\endcsname[1]{%
%     \csname @@\string#1\endcsname{##1}\@for\@temp:=##1\do{\wrtusdrf{\@temp}\wrtusdrf{{\@temp}}}}%
%   \expandafter\let\expandafter#1\csname relax\string#1\endcsname
% }
% \newcommand{\refcheckizetwo}[1]{%
%   \expandafter\let\csname @@\string#1\endcsname#1%
%   \expandafter\DeclareRobustCommand\csname relax\string#1\endcsname[2]{%
%     \csname @@\string#1\endcsname{##1}{##2}\wrtusdrf{##1}\wrtusdrf{{##1}}\wrtusdrf{##2}\wrtusdrf{{##2}}}%
%   \expandafter\let\expandafter#1\csname relax\string#1\endcsname
% }
% \makeatother
% 
% \refcheckize{\cref}
% \refcheckize{\Cref}
% \refcheckizetwo{\crefrange}
% \refcheckizetwo{\Crefrange}

\begin{document}

\newcommand{\D}{\mathbb{D}}
\newcommand{\R}{\mathbb{R}}
\newcommand{\N}{\mathbb{N}}
\newcommand{\Q}{\mathbb{Q}}

\newcommand{\limn}{\underset{n\rightarrow\infty}{\rightarrow}}
\newcommand{\uPi}{\underline{\Pi}}
\newcommand{\oPi}{\overline{\Pi}}

\def\diam{\operatorname{diam}}
\def\dist{\operatorname{dist}}
\def\diver{\operatorname{div}}
\def\ess{\operatorname{ess}}
\def\id{\operatorname{id}}
\def\inner{\operatorname{int}}
\def\osc{\operatorname{osc}}
\def\sign{\operatorname{sign}}
\def\supp{\operatorname{supp}}
\def\exp{\operatorname{exp}}
\newcommand{\BMO}{BMO(\Omega)}
\newcommand{\LOne}{L^{1}(\Omega)}
\newcommand{\LOnen}{(L^{1}(\Omega))^d}
\newcommand{\LTwo}{L^{2}(\Omega)}
\newcommand{\Lq}{L^{q}(\Omega)}
\newcommand{\Lp}{L^{2}(\Omega)}
\newcommand{\Lpn}{(L^{2}(\Omega))^d}
\newcommand{\LInf}{L^{\infty}(\Omega)}
\newcommand{\HOneO}{H^{1,0}(\Omega)}
\newcommand{\HTwoO}{H^{2,0}(\Omega)}
\newcommand{\HOne}{H^{1}(\Omega)}
\newcommand{\HTwo}{H^{2}(\Omega)}
\newcommand{\HmOne}{H^{-1}(\Omega)}
\newcommand{\HmTwo}{H^{-2}(\Omega)}
\newcommand{\WOne}{W^{1,1}(\Omega)}

\newcommand{\LlogL}{L\log L(\Omega)}
\newcommand{\LpX}{L^p((0,T);X)}
\newcommand{\LtwoX}{L^2((0,T);X)}

\newcommand{\Co}{C_0(\overline{\Omega})}
\newcommand{\RM}{{\cal M}(\Omega)}
\newcommand{\BV}{BV(\Omega)}

\newcommand{\A}{{\cal A}}
\newcommand{\B}{{\cal B}}
\newcommand{\Ss}{{\cal S}}
\newcommand{\T}{{\cal T}}
\newcommand{\X}{{\cal X}}

\def\avint{\mathop{\,\rlap{-}\!\!\int}\nolimits}

\theoremstyle{definition}
\newtheorem{proofpart}{Step}
\makeatletter
\@addtoreset{proofpart}{Theorem}
\makeatother
\numberwithin{equation}{section}

\title{Global entropy solutions to a degenerate parabolic-parabolic  chemotaxis system for flux-limited dispersal}
\author{Anna~Zhigun}
\renewcommand\Affilfont{\itshape\small}
\affil{ Queen's University Belfast, School of Mathematics and Physics\\
University Road, Belfast BT7 1NN, Northern Ireland, UK\\
  \href{mailto:A.Zhigun@qub.ac.uk}{A.Zhigun@qub.ac.uk}}
\date{}
\maketitle
\begin{abstract}
  Existence of global finite-time bounded entropy solutions to a parabolic-parabolic  system proposed in \cite{BBNS2010} is established in bounded domains under no-flux boundary conditions for nonnegative bounded initial data. This  modification of the classical Keller-Segel model features  degenerate diffusion and chemotaxis that are both subject to flux-saturation. The approach is based on Schauder's fixed point theorem and calculus of functions of bounded variation. 
  \\\\
{\bf Keywords}: flux-limited diffusion; flux-limited chemotaxis; entropy solutions. 
\\
MSC 2020: 
26B30 %
35K55 %
35K65 %
35M30 %
35Q92 %
92C17 %
\end{abstract}

\section{Introduction}
% \label{SecIntro}
In this paper we study the  initial-boundary value problem (IBVP) for the chemotaxis system 
\begin{subequations}\label{new}
 \begin{align}
  &\partial_t c=\nabla\cdot\left(D_c\frac{c\nabla c}{\sqrt{c^2+\frac{D_c^2}{C^2}|\nabla c|^2}}-\chi c \frac{\nabla v}{\sqrt{1+|\nabla v|^2}}\right)+f_c(c,v),\label{newc}\\
  &\partial_t v=D_v\Delta v+f_v(c,v)\label{newv}
 \end{align}
 \end{subequations} 
 equipped with no-flux boundary conditions. Equations \cref{new} were derived in \cite{BBNS2010} and serve to overcome several issues arising in connection with the classical Keller-Segel model for chemotaxis  \cite{KS70,KS71}
\begin{subequations}\label{KS}
 \begin{align}
  &\partial_t c=\nabla\cdot(D_c\nabla c-\chi c\nabla v),\label{KSc}\\
  &\partial_t v=D_v\Delta v-\gamma v+\alpha c.\label{KSv}
 \end{align}
\end{subequations}
Both systems describe  spatio-temporal evolution of cell density, $c$ and concentration of  chemoattractant, $v$.  Functions  $f_c$ and $f_v$ and constants $C,D_c,D_v,\chi,\alpha,\gamma>0$  are assumed given. 
Each of the $c$-equations \cref{KSc,newc} can be rewritten in the form of a continuity equation
\begin{align}
 &\partial_t c=-\nabla\cdot(c(V_d+V_c)),\nonumber
\end{align}
where $V_d$ and $V_c$ are velocities due to diffusion and chemotaxis, respectively.
Whereas for \cref{KSc} 
\begin{align}
 V_d=-D_C\frac{\nabla c}{c},\qquad V_c=\chi \nabla v\nonumber
\end{align}
need not be finite everywhere, for \cref{newc} we have 
\begin{align}
 &V_d=-D_c\frac{\nabla c}{\sqrt{c^2+\frac{D_c^2}{C^2}|\nabla c|^2}},\qquad V_c=\chi  \frac{\nabla v}{\sqrt{1+|\nabla v|^2}},\label{Vdc}
\end{align}
so that $V_d$ and $V_c$ are a priori bounded by $C$ and $\chi$, respectively. 
Thus, unlike \cref{KSc}, both diffusion and chemotaxis in \cref{newc} are flux-limited (FL).  
{A justified expectation is that for such choices as in \cref{Vdc} neither the overall propagation speed of the cell  population can  exceed the universal constant $C+\chi$ nor can aggregation due to taxis be unlimited. In contrast, for the original model \cref{KS}, the propagation speed is always infinite due to the non-degeneracy of diffusion, whereas taxis can lead even to blow-ups, i.e. the cell density tending to infinity, in finite or infinite time, see, e.g. \cite{Horstmann,BBTW,LankWink2020}. System \cref{new} therefore provides a more realistic description for cell dispersion due to diffusion and chemotaxis.} In this work, we concentrate solely on the analysis of \cref{new} and refer the interested reader to  \cite{BBNS2010} as well as our review \cite{ZhRevFluxL} for {further} details on the biological motivation behind  equations with FL diffusion and/or taxis and their derivation. 

While an equation with a flux-limited diffusion  may be attractive for modelling, the presence of such a term   leads to substantial analytical difficulties. In particular, spatial derivatives of solutions that are nonnegative but not everywhere positive are, in general, Radon measures, and not some integrable functions. Dealing with such  solutions requires using the calculus of functions of bounded variation (BV functions). 

 Equations with FL diffusion, also with  $V_d$ different from that in \cref{Vdc}, however without chemotaxis, i.e. for $\chi=0$, are already well-understood: well-posedness and other qualitative properties  under various boundary conditions and in the whole space were established in \cite{ACM021,ACM02,ACM05,ACM05Cauchy,ACMM06,ACM08,ACMM10,ACalvoMSoler,CMSV}, see also the review paper \cite{CCCSS15} for further references. Well-posedness of the Cauchy problem for a FL reaction-diffusion equation was obtained in  \cite{ACM10} and the corresponding travelling waves were studied in \cite{CGSS13}. 
As to FL diffusion-chemotaxis systems, only parabolic-elliptic modifications, i.e. such where  \cref{newv} is replaced by some elliptic equation, have been analysed so far. 
  Existence and boundedness/blow-up were addressed in the radial-symmetric case for strictly positive initial values of $c$ in \cite{BelWinkler20171,BelWinkler20172,MIZUKAMI2019,Chiyoda2019,TuMuZheng2022,YiMuQiuXu2021}. The travelling waves analysis in   \cite{arias2018cross,CYPerthame,campos2021kinks} allowed for biologically relevant not necessarily strictly positive wave profiles.  A qualitative study of a fully parabolic system such as \cref{new} has been lacking. 
In the present work we make a step in this direction by establishing for $$0<\chi<C$$ global existence for the IBVP associated with \cref{new} subject to no-flux boundary conditions in a certain class of entropy solutions.  Our main  \cref{mainthm} (see below) holds in general spatial domains and for general nonnegative and bounded initial data. No radial symmetry or initial positivity of $c$ is thus required. 

System \cref{new} raises analytical challenges  that differ from those for   \cref{KS}. Indeed, since the latter is a regular  upper-triangular parabolic system, standard  theory (see, e.g. \cite{Amann1}) ensures global well-posedness in the classical sense provided that solutions are a priori bounded for finite times. The study of such systems thus mostly concentrates upon the issue of boundedness/blow-up. In the case of system \cref{new}  sufficiently regular weak solutions are (see \cref{Lemcbnd} below) a priori bounded over finite times if $0<\chi<C$ and  $f_c$ satisfies appropriate assumptions. This holds thanks to the imposed limitation of the chemotactic flux. 
Notwithstanding,   saturation in the diffusion term precludes the application of standard results for parabolic equations and systems. In  \cite{ACM021,ACM02,ACM05,ACM05Cauchy,ACMM06,ACM08,ACMM10}, where a solvability theory  for single parabolic FL diffusion equations was developed, one first constructed a mild solution and then verified that it is actually the unique entropy solution.  In the case of a strongly coupled system such as  \cref{new} it seems that neither it is possible to  set up a semigroup for the system, nor to prove  uniqueness of entropy solutions, mainly due to the generally poor regularity of $c$. To overcome this issue, we first decouple the system, treat  solvability of the two equations separately, and finally obtain a joint solution with the help of Schauder's fixed point theorem.
In order to solve equation \cref{newc} for a fixed $v$ we adapt the aforementioned theory \cite{ACM021,ACM02,ACM05,ACM05Cauchy,ACMM06,ACM08,ACMM10}.

The rest of the paper is organised as follows. We begin with the preliminary \cref{prelim} where we collect some notation and useful facts. In particular, we recall 
some machinery which helps to operate with BV functions. In \cref{prset} we fully set up the problem and state the main result, the existence \cref{mainthm}.   \cref{Secc,Secv} are devoted to the solvability of \cref{newc,newv} for fixed $v$ and $c$, respectively. The most extensive Section of this work, \cref{Secc},  begins with \cref{Secaprio} where  some a priori bounds are derived for sufficiently regular solutions. It is succeeded by a study of some elliptic operators related to the $c$-equation in \cref{Secell}. Next, in \cref{Secdiscr},  we consider a family of regularised time discretisations of \cref{newc} for a fixed $v$. 
This allows us to pass to a  limit and thus obtain an entropy solution to \cref{newc} in \cref{Secsolc}. 
Finally, in \cref{SecSchauder} we establish the existence of  entropy solutions to the whole system \cref{new}.
\section{Preliminaries}\label{prelim}
Throughout this paper, $\Omega\subset \R^d$, $d\in\N$,  is a smooth bounded domain and $O\subset \R^N$, $N\in\N$, is any domain.

For the (continuous) dual of a normed space $X$ we use the notation $X^*$. Symbols $\rightharpoonup$ and $\overset{*}{\rightharpoonup}$ represent the weak and weak-$*$ convergence, respectively. 
By $<\cdot,\cdot>$ we denote the duality pairing.
\subsection{Spaces of scalar functions}%\label{SecFSp}

Let $k\in \N_0$.
As usual, $C^k(\overline{O})$ ($C(\overline{O}):=C^0(\overline{O})$)  denotes the space of $k$ times continuously differentiable real-valued functions in $\overline{O}$ equipped with their standard respective norms.
By $D(O)$ we denote the space of test functions in $O$. The closure of $D(O)$ in $C^k(\overline{O})$ ($C(\overline{O})$) is denoted by $C_0^k(\overline{O})$ ($C_0(\overline{O})$).
Similarly, we write, e.g. $C(E;F)$ for the space of continuous functions with domain $E$ and values in $F$.

We use standard Lebesgue spaces $L^p(\Omega)$, Sobolev spaces $W^{s,p}(\Omega)$ and  $H^s(\Omega):=W^{s,2}(\Omega)$ for various $p\in[1,\infty]$ and $s\in\R$.
We assume the reader to be familiar with their standard properties. 

Index $\nu$ added into the notation of a space of functions defined in $\overline{\Omega}$ means that this is the subspace of functions with zero normal derivative, understood in the trace sense where necessary, on the boundary  of $\Omega$.

\subsection{Measures and BV functions}\label{SecBV}
In this Subsection we introduce some notation and recall a few facts about Radon measures and BV functions. A detailed treatment of BV functions can be found, e.g. in \cite{AFP2000}. 
By ${\cal M}(\Omega)$ ($({\cal M}(\Omega))^d$) we denote the space of finite real (vector, with values in $\R^d$) Radon measures in $\Omega$. The norms in these spaces are given by the total variation. We follow a standard convention and  identify these measures with the corresponding continuous lineal functionals on $\Co$ defined via integration. We write $|\mu|$/$\mu^{ac}$/$\mu^s$ for the variation/absolutely continuous part/singular part of a Radon measure $\mu$, respectively. We also identify the absolutely continuous part with the corresponding density (with respect to the Lebesgue measure, that is). By $\frac{\mu_1}{\mu_2}$ we denote the Radon-Nikodym derivative of a measure $\mu_1$ with respect to a positive measure $\mu_2$.

By $|\cdot|$ we denote the $d$-dimensional Lebesgue measure and by ${\cal H}^{d-1}$ the $(d-1)$-dimensional Hausdorff measure in $\R^d$.

Unless specified otherwise, a.e. (a.a.), meaning almost everywhere (almost all), is to be understood  with respect to the Lebesgue measure of the dimension that is standardly used for a set in question.

As usual, $\BV$ stands for the subspace of $\LOne$ which consists of functions $w$ of bounded variation. The latter means that the distributional gradient $D w$ is given by a  Radon measure of a finite total variation. We write $\nabla w/D^sw/D^cw/D^jw$ for the absolutely continuous/singular/Cantor/jump parts of $D w$, respectively. If $w\in \WOne$, then, of course, $Dw=\nabla w$. We denote by $S_w$ the approximate discontinuity set and by  $J_w$ the set of approximative jump points of $w$. The approximate limit of $w$ at $x\in\Omega\backslash S_w$ is denoted by $\widetilde{w}(x)$. For $x\in J_w$, the approximate limits of $w$ arising in directions $\nu_w(x):=\frac{Dw}{|Dw|}(x)$ and $-\nu_w(x)$ are denoted by $w^+(x)$ and $w^-(x)$, respectively. 

As in \cite{ACM021,ACM02,ACM05,ACM05Cauchy,ACMM06,ACM08,ACMM10}, we need pairings defined via the 'partial integration formula', in our case for a bounded domain:
\begin{align}
\left<(Z,Dw),\rho\right>:=\int_{\Omega}-\rho w\nabla\cdot Z-Zw\cdot\nabla\rho\,dx\qquad \text{for all }\rho\in C_0(\overline{\Omega})\cap C^1(\overline{\Omega})\label{prod} 
\end{align}
for $w\in  \BV$ and $Z\in X_{\infty}(\Omega)$, where 
\begin{align}
X_{\infty}(\Omega):=\{Z\in (\LInf)^d:\ \nabla\cdot Z\in \LInf\}.  \label{Xinf}                                                                \end{align}
We make use of the following properties originating from  \cite{Anzellotti,KohnTem}.  
\begin{enumerate}
[label=(\arabic*),ref=(\arabic*)]
\item\label{pairingdef} The mapping $(\cdot,D\cdot):X_{\infty}(\Omega)\times \BV\rightarrow {\RM}$ is well-defined, bilinear, bounded, and satisfies
\begin{align}
 \|(Z,Dw)\|_{{\RM}}\leq \|z\|_{\LInf}\|D w\|_{{\RM}}\qquad\text{for all }Z\in X_{\infty}(\Omega)\text{ and }w\in  \BV.\nonumber
\end{align}
 \item Paring $(\cdot,\cdot)$ extends  product of functions  and  multiplication of a finite measure by a continuous function:
 \begin{alignat}{3}
 &(Z,Dw)=Z\cdot \nabla w&&\qquad\text{for all }Z\in X_{\infty}(\Omega),\ w\in \WOne, \nonumber\\
 &\left<(Z,Dw),\rho\right>=\left<Dw,Z\rho\right>&&\qquad\text{for all }Z\in (C(\overline{\Omega}))^d,\ \rho\in{ \Co}.\nonumber
\end{alignat}
\item Measure $(Z,Dw)$ is absolutely continuous with respect to $|Dw|$ and has with respect to it a bounded density.
\item For the absolutely continuous part of $(Z,Dw)$ we have the explicit formula 
\begin{subequations}
\begin{align}
(Z,Dw)^{ac}=Z\cdot\nabla w\qquad\text{a.e. in }\Omega. \label{ZDwa}                               \end{align}
\item%\label{prsing}
The singular part $(Z,Dw)^s=(Z,Dw)-Z\cdot\nabla w$ is a finite measure which is absolutely continuous with respect to $|D^s_xw|$ and satisfies
\begin{align}
 |(Z,Dw)^s|\leq \|Z\|_{\LInf}|D^sw|\qquad\text{in }\RM.\label{ZDws}
\end{align}
\end{subequations}

\end{enumerate}

\subsection{Spaces of functions and measures with values in Banach spaces}
In this Subsection, $I\subset\R_0^+$ is an interval, $p\in[1,\infty]$, and  $X$ is a Banach space with the dual space $X^*$. 

We denote: by $C(I;X)$ ($C^1(I;X)$) the space of continuous (continuously differentiable) functions $u:I\rightarrow X$, by $C_w(I;X^*)$ the space of weakly continuous functions $u:I\rightarrow X^*$, by $L^p(I;X)$ the space of strongly measurable functions $u:I\rightarrow X$ with $\|u\|_X\in L^p(I)$, and by $L^p_{w}(I;X)$ ($L^p_{w-*}(I;X^*)$) the space of weakly (weakly-$*$) measurable functions $u:I\to X$ ($u:I\rightarrow X^*$) with $\|u\|_{X}\in L^p(I)$ ($\|u\|_{X^*}\in L^p(I)$). 
Further, $W^{1,p}(I;X)$ stands for the Sobolev space of weakly differentiable functions $u\in L^p(I;X)$ such that $\partial_t u \in L^p(I;X)$. 

Let $Y$ be a space that contains $X$. If $u$ is a $Y$-valued distribution on $I$ and such that it can be extended to a function with values in $X$, then we identify $u$ with that extension. 

Since we only deal here with the cases where $X$ is a space of functions or measures, we typically  adopt  the notation $u(t,\cdot)$ rather than $u(t)$ for $t\in I$.

For a function $u:I\to X^d$, $d\in\N$, we often write $\|u(t,\cdot)\|_{X}$ instead of $\||u(t,\cdot)|\|_{X}$,  where $|\cdot|$ denotes the Euclidean norm in $\R^d$.

If $loc$ appears in the index of a space notation, then the corresponding defining property is assumed to hold on every compact subset of $I$.

If $f:I\to \RM$, then we set $(f^{ac})(t,\cdot):=(f(t,\cdot))^{ac}$ and $(f^s)(t,\cdot):=(f(t,\cdot))^s$ for $t\in I$. \\

By ${\cal M}(I ;X^*)$ we denote the space of $X^*$-valued countably additive regular vector measures in $I$ that are of bounded variation.  
 The reader is referred to \cite[Chapter 1  \S1]{DiestelUhl} for the definitions of the involved terms. We equip this vector measure space with the variation over $I$ as norm. An extension of the Riesz-Markov-Kakutani representation theorem holds for compact $I$\cite{Singer} (see also \cite[Chapter III \S19]{Dinculeanu}): 
 \begin{align}
  {\cal M}(I ;X^*)\cong(C(I ;X))^*.\label{RMKV}
 \end{align}
In view of \cref{RMKV} and a representation result given by \cref{LemRep} from \cref{AppVM} we identify $f\in L^1_{w-*}((0,T);X^*)$ with the vector measure defined by
\begin{align}
  \left<\mu_f(E),\varphi\right>:=\int_E\left<f(t),\varphi\right>dt\qquad\text{for all }\varphi\in X\text{ and Lebesgue measurable }E\subset I \nonumber%
 \end{align}
 as well as with the corresponding element of $(C(I;X))^*$.
\subsection{Interpolation and discretisation of vector-valued functions}\label{PrelimDI}
 For $n\in\N$ set
\begin{align*}
 \frac{1}{n}\N_0:=\left\{\frac{k}{n}:\ k\in\N_0\right\}.
\end{align*}
Let $X$ be some Banach space.  We use two standard ways of interpolating a function  $u:\frac{1}{n}\N_0\cap [0,T]\to X$: piecewise constant
\begin{alignat}{3}
 &\uPi_n{u}(t):=u\left(\frac{k-1}{n}\right)&&\qquad\text{in }\left(\frac{k-1}{n},\frac{k}{n}\right)\qquad\text{ for } k\in\{1,2,\dots,nT\},\nonumber\\
 &{\oPi_n}{u}(t):=u\left(\frac{k}{n}\right)&&\qquad\text{in }\left(\frac{k-1}{n},\frac{k}{n}\right)\qquad\text{ for } k\in\{1,2,\dots,nT\},\nonumber
\end{alignat}
and piecewise linear
\begin{align}
 \Lambda_n{u}(t):=&\frac{u\left(\frac{k}{n}\right)-u\left(\frac{k-1}{n}\right)}{\frac{1}{n}}\left(t-\frac{k}{n}\right)+u\left(\frac{k}{n}\right)\nonumber\\
 =&\frac{\frac{k}{n}-t}{\frac{1}{n}}u\left(\frac{k-1}{n}\right)+\frac{t-\frac{k-1}{n}}{\frac{1}{n}}u\left(\frac{k}{n}\right)\nonumber\\
 =&\frac{\frac{k}{n}-t}{\frac{1}{n}}\uPi_n{u}(t)+\frac{t-\frac{k-1}{n}}{\frac{1}{n}}{\oPi_n}{u}(t)\qquad\text{in }\left(\frac{k-1}{n},\frac{k}{n}\right)\qquad\text{ for }k\in\{1,2,\dots,nT\}.\label{convComb}
\end{align}

For $\widetilde u\in L^1_{loc}([0,T];X)$, its $n$-th discretisation via Steklov averages is given by
\begin{align}
 P_n\widetilde u\left(\frac{k}{n}\right):=n\int_{\frac{k-1}{n}}^{\frac{k}{n}}\widetilde u(t)\,dt\qquad\text{for } k\in\{1,2,\dots,nT\}.\nonumber
\end{align}
Further, we set
\begin{align*}
 P_n\widetilde u(0):=P_n\widetilde u\left(\frac{1}{n}\right).
\end{align*}
Clearly, the  introduced  interpolation and discretisation operators are linear and preserve non-negativity: for a functional space $X$
\begin{align}
 u\geq0\text{ in }\frac{1}{n}\N_0\cap[0,T]&\qquad \Rightarrow \qquad\uPi_nu,{\oPi_n}u,\Lambda_nu\geq0 \text{ in }[0,T],\nonumber\\
 \widetilde u\geq 0\text{ in }[0,T]&\qquad \Rightarrow \qquad P_n\widetilde u\geq0 \text{ in }\frac{1}{n}\N_0\cap[0,T].\nonumber
\end{align}
Several other useful properties of these operators are provided in \cref{AppendixInter}.
\subsection{Further notation}
We make use of functions
\begin{align*}
T_{m,M}(x):=&\begin{cases}m&\text{ for }x\leq m,\\ x&\text{ for }x\in(m,M),\\M&\text{ for }x\geq M,\end{cases}\qquad -\infty< m<M<\infty,\\
T_{\delta}:=&T_{-\delta,\delta},\\
 \sign_{\delta}(x):=&\begin{cases}-1&\text{ for }x<-\delta,\\\frac{1}{\delta}x&\text{ for }x\in[-\delta,\delta],\\1&\text{ for }x>\delta,\end{cases}\qquad \delta>0,\\
 x_+:=&\begin{cases}
       x&\text{ for }x\geq0,\\
       0&\text{ for }x<0.
      \end{cases}
\end{align*}

For $h:\R^k\times \R^d\to\R$,  $k,d\in\N$, we define its recession function with respect to the second variable as in  \cite{DalMaso}:
\begin{align}
 h^{\infty}(y,\xi):=\underset{s\to0}{\lim}sh\left(y,\frac{\xi}{s}\right)\qquad\text{for }y\in\R^k,\ \xi\in\R^d,\nonumber
\end{align}
{provided all limits exist.}

Finally, we make the following  conventions. \begin{itemize}
\item 
For indices $i\in\N$, $C_i$ denotes a positive 
constant or, alternatively, a positive non-decreasing function of its arguments. The dependence upon the parameters of the problem (see \cref{prset} below), i.e. the space dimension $d$, domain $\Omega$, constants $\chi$, $p_0$, and $\alpha_0$, functions $f_c$ and $f_v$, the involved norms of the initial data, etc. is mostly omitted.  
\item Further, $g:\R_0^+\to \R$ stands for an arbitrary locally Lipschitz function. We do not track dependence upon $g$ in the constants/functions $C_i$.
\end{itemize}

\section{Problem setting and main result}\label{prset}
Given a smooth bounded domain $\Omega\subset\R^d$, $d\in\N$, with the unit outer normal $\nu$,  we consider the system
\begin{subequations}\label{IBVPc}
 \begin{alignat}{3}
  &\partial_t c=\nabla\cdot\left(a(c,\nabla c)-\chi c \Phi(\nabla v)\right)+f_c(c,v)&&\qquad\text{in }(0,\infty)\times \Omega,\label{flc}\\
  &\left(a(c,\nabla c)-\chi c \Phi(\nabla v)\right)\cdot\nu=0&&\qquad\text{in }(0,\infty)\times \partial\Omega,\label{noflux}\\
  &c(0)=c_0&&\qquad\text{in } \{0\}\times \Omega,\label{inic}
 \end{alignat}
 \end{subequations}
 and
 \begin{subequations}\label{IBVPv}
 \begin{alignat}{3}
  &\partial_t v=D_v\Delta v+f_v(c,v)&&\qquad\text{in }(0,\infty)\times \Omega,\label{flv}\\
  &\partial_{\nu}v=0&&\qquad\text{in }(0,\infty)\times \partial\Omega,\label{bcv}\\
  &v(0)=v_0&&\qquad\text{in } \{0\}\times \Omega.\label{iniv}
 \end{alignat}
 \end{subequations}
We next list our assumptions on the parameters along with some of their properties:
\begin{align}
\chi\in(0,1),\qquad D_v>0;\label{asschi}
\end{align}
\begin{align}
 a(c,\xi):=\begin{cases}\frac{c\xi}{\sqrt{c^2+|\xi|^2}}&\text{for }c>0,\\
      0&\text{for }c=0, 
    \end{cases}\qquad\xi\in\R^d,\label{a}
\end{align}
so that   
\begin{subequations}\label{conda}
\begin{alignat}{3}
&a\in C(\R_0^+\times \R^d;\R^d),&&\\
&|a(c,\xi)|\leq c&&\qquad\text{for }c\geq0,\ \xi\in\R^d,\label{abound}\\
&|a(c_1,\xi)-a(c_2,\xi)|\leq |c_1-c_2|&&\qquad\text{for } c_1,c_2\geq0,\ \xi\in\R^d,\label{aLip1}\\
&a(c,\xi)\cdot\xi\geq c(|\xi|-c)&&\qquad\text{for } c\geq0,\ \xi\in\R^d,\label{acor}\\
&(a(c,\xi_1)-a(c,\xi_2))\cdot(\xi_1-\xi_2)\geq 0&&\qquad\text{for }c\geq0,\ \xi_1,\xi_2\in\R^d,\label{amon}\\
&(a(c,\cdot)\cdot(\cdot))^{\infty}(\xi)=c|\xi|&&\qquad\text{for }c\geq0,\ \xi\in\R^d;\label{recessa}
\end{alignat}
\end{subequations}
\begin{align}
\Phi(\xi):=\frac{\xi}{\sqrt{1+|\xi|^2}}\qquad\text{for } \xi\in\R^d,\label{Phi}
\end{align}
so that 
\begin{subequations}
\begin{alignat}{3}
&\Phi\in C^{\infty}_b(\R^d;\R^d),&& \nonumber\\
& |\Phi(\xi)|\leq 1&&\qquad\text{for } \xi\in\R^d;\label{Phi1}
\end{alignat}
\end{subequations}
\begin{subequations}\label{fc}
\begin{alignat}{3}
&f_c\in C^1(\R_0^+\times\R_0^+;\R),\label{fcfvreg}&&\\
 &f_c(c,v)\leq \Cl[F]{fc1}c&&\qquad\text{for }c,v\geq0,\label{asfc1} \\
 &f_c(0,v)=0&&\qquad\text{for }v\geq0,
\end{alignat}
\end{subequations}
where $\Cr{fc1}\in\R$;
\begin{subequations}\label{fv}
\begin{alignat}{3}
 &f_v\in C^1(\R_0^+\times\R_0^+;\R),\\
 &f_v(c,v)\leq \Cl[F]{fv1}(c)(v+\Cl[F]{fv2}(c))&&\qquad\text{for }c,v\geq0, \\%\label{fvsub}\\
 &f_v(c,0)\geq0&&\qquad\text{for }c\geq0,%\label{fv0}
\end{alignat}
\end{subequations}
where $\Cr{fv1},\Cr{fv2}\in C(\R_0^+;\R_0^+)$;
\begin{align}
 &0\leq c_0\in \LInf,\label{c0bnd}\\
 &0\leq v_0\in W^{2\left(1-\frac{1}{p_0}\right),p_0}_{\nu} \qquad\text{for some }p_0>d+2.\label{v0bnd}
\end{align}
\begin{Remark}%
\begin{enumerate}
\item 
The specific choice of $a$ in  \cref{a} corresponds to $D_c=C=1$ in \cref{newc}. There is no loss of generality here because such an $a$ can be achieved through introducing rescaled time and space variables 
\begin{align*}
t':= \left(\frac{C}{D_c}\right)^{\frac{1}{2}}t, \qquad x':=\frac{C}{D_c}x.
\end{align*}

Moreover, our analysis is not tied to a FL diffusion such as in \cref{newc} and, similarly to, e.g.  \cite{ACM05Cauchy}, it  extends to more general FL diffusion terms. {This is in particular the case if $a$ satisfies \cref{conda} and there  exists a Lagrangian $L$ associated with it that satisfies conditions \cref{propL} below, but also in more general situations.} 
\item Less restrictive conditions on other parameters could also be considered. For instance, one could relax the one-sided sublinear growth condition \cref{fcfvreg} on $f_c$  or assume $\chi$ to be a function rather than a constant. 
To streamline the exposition, we abstained from such generalisations here. 

Note that the logistic growth $f_c=f_c(c)=c(1-c)$, such as previously adopted in \cite{ACM10}, is admissible under \cref{fc}.
\end{enumerate}
\end{Remark}

We define solutions to \cref{IBVPc}-\cref{IBVPv} as follows. 
\begin{Definition}[Weak and entropy solutions to \cref{IBVPc}]\label{Defsol1} 
Let $a$ and $\Phi$ be as defined in \cref{a} and \cref{Phi}, respectively, $f_c$ satisfy \cref{fc}, and  $c_0$ satisfy \cref{c0bnd}. We call a pair of functions $(c,v):\R_0^+\times \overline{\Omega}\rightarrow\R_0^+\times \R_0^+$  a \underline{global weak solution} to \cref{IBVPc}-\cref{IBVPv} if $c$ and $v$ satisfy:
\begin{subequations}\label{regc}
\begin{align}
&c\in L^{\infty}_{loc}(\R_0^+;\LInf) \cap L_{w,loc}^1(\R_0^+;\BV)\cap C_w(\R_0^+;\LInf),\label{ClassC_}\\
&c\in W^{1,\infty}_{loc}(\R_0^+,(\WOne)^*),%\label{ClassCt_}
\end{align}
\end{subequations}
\begin{align}
 &v\in  C(\R_0^+;C^1_{\nu}(\overline{\Omega})),\label{classvinc}
\end{align}
  the weak formulation
 \begin{align}
  &\left<\partial_tc,\varphi\right>=\int_{\Omega}-J\cdot\nabla\varphi+f_c(c,v)\varphi\,dx\qquad \text{a.e. in }(0,\infty)\qquad\text{for }\varphi\in L^1_{loc}(\R_0^+;\WOne),\label{weakfc_}
 \end{align}
 where 
 \begin{align*}
  J:=a(c,\nabla c)-\chi c\Phi(\nabla v),
 \end{align*}
 and the initial condition  \cref{inic} in $\LInf$.
 
 We call $(c,v)$ a \underline{global entropy solution} to \cref{IBVPc}-\cref{IBVPv} if it is a global weak solution and additionally satisfies 
 for all $T\in\N$ and $0\leq m< M<\infty$:
\begin{align}
 \partial_t \int_0^cT_{m,M}(u)\,du+J\cdot \nabla T_{m,M}(c)-\nabla\cdot(JT_{m,M}(c))-f_c(c,v)T_{m,M}(c)\in L^1_{w-*,loc}(\R_0^+;\RM)\nonumber
\end{align}
 and the \underline{entropy inequalities}
  \begin{subequations}\label{entropyin}
  \begin{align}
   &-\Cl{CBNDchi}(T)|D^sT_{m,M}(c)|\nonumber\\
   \leq&\partial_t \int_0^{c}T_{m,M}(u)\,du+J\cdot \nabla T_{m,M}(c)-\nabla\cdot(JT_{m,M}(c))-f_c(c,v)T_{m,M}(c)\label{EstBelow}\\
   \leq &-\frac{1}{2}|D^sT_{m,M}^2(c)|+\frac{\chi}{2} D^sT_{m,M}^2(c)\cdot\Phi(\nabla v)\qquad\text{in }\RM\text{ a.e. in }(0,T),\label{EstAbove}
  \end{align}
  where
  \begin{align}
   \Cr{CBNDchi}(T):=\Cl{CBNDchi1}(1+T)e^{T\Cr{fc1}}
 \|c_0\|_{\LInf}
  \end{align}
for some constant $\Cr{CBNDchi1}$ that depends only on $d$, $\Omega$, and $\chi$.
\end{subequations}
\end{Definition}
\begin{Remark}
  \begin{enumerate}
\item The one-sided entropy inequalities \cref{EstAbove} are reminiscent of those in, e.g. \cite[Definition 6]{ACM05}. In contrast, inequalities  adopted in, e.g. \cite[Definition 4.4]{ACM05Cauchy} each contain  a pair of piecewise linear transformations of the form $T_{m,M}$. Such more delicate entropy inequalities, but also further  conditions on the time derivative of $c$ and/or boundary traces (compare \cite[Definition 6]{ACM05}) can be considered for \cref{IBVPc} as well. Since we do not aim at uniqueness, we chose to restrict ourselves to \cref{EstAbove}. 
\item In contrast to, e.g. \cite{ACM05}, an estimate from below,  \cref{EstBelow}, is included as well. 
Each weak solution to \cref{IBVPc}-\cref{IBVPv}  satisfies it, see \cref{RemLB}. 
\item {The entropy inequalities  \cref{EstBelow,EstAbove} ensure that the right-hand side of \cref{EstBelow} is squeezed  between two singular measures that are subordinate to the singular part of the  derivative of $T_{m,M}(c)$. This rather tight control on the evolution of cutoffs of $c$  relies heavily on the flux-limitation in \cref{flc}.  A meaningful solution concept involving more complex cutoffs was previously introduced for the original non-FL system \cref{KS} in  \cite{ZhigunSIMA}. However, in this case, terms that are quadratic rather than sublinear in the gradient of the $c$-component emerge in the approximation systems for those cutoffs. The issue of controlling the singular parts of such terms in the limit remains unresolved.
}
\end{enumerate}

\end{Remark}

{\begin{Remark}
  Choosing $f_v\equiv 0$ and $v_0$ a constant function  reduces  \cref{flc} to a FL (reaction)-diffusion equation for $c$ alone and allows a direct comparison with previous results.
  \begin{enumerate}
  \item  We expect the $c$-component of our solution to \cref{IBVPc} to be unique for fixed $v$ and, moreover, to coincide with the entropy solution in the sense of  \cite{ACM05} if $v$ is constant, see  \cref{RemMethod}.
   \item 
   Unlike the constructions in  \cite{ACM021,ACM02,ACM05,ACM05Cauchy,ACMM06,ACM08,ACMM10} and other works, where
 \begin{align*}
   &T_{{m},M}(c)\in L_{w,loc}^1(\R_0^+;\BV)\qquad \text{for }0<{m}<M<\infty,\\
   &c\in C(\R_0^+;\LOne),
  \end{align*}
  we have with \cref{ClassC_} that
  \begin{align*}
   &c\in L_{w,loc}^1(\R_0^+;\BV),\\
   &c\in C_w(\R_0^+;\LOne).
  \end{align*}
  This discrepancy is due to the differences in the derivation methods, see \cref{RemMethod}. Thus, our approach improves on the spatial regularity close to $\{c=0\}$, but maintains only weak continuity in time for $c$ taking values in $\LOne$. 
  Still, in view of the expected uniqueness (see previous item), $c$ then  enjoys the  regularity
  \begin{align*}
   c\in L_{w,loc}^1(\R_0^+;\BV)\cap C(\R_0^+;\LOne).
  \end{align*}
  \end{enumerate}
  
\end{Remark}
}

\begin{Definition}[Strong solution to \cref{IBVPv}]\label{Defsol2} 
Let $f_v$ satisfy \cref{fv} and $v_0$ satisfy \cref{v0bnd}. We call a pair of functions $(c,v):\R_0^+\times \overline{\Omega}\rightarrow\R_0^+\times \R_0^+$  a \underline{global strong solution} to \cref{IBVPv} if $c$ and $v$ have the regularity 
\begin{align}
&c\in L^{\infty}_{loc}(\R_0^+;\LInf)\label{regcinv}
\end{align}
and
\begin{align}
 &v\in L_{loc}^{p_0}(\R_0^+;W^{2,p_0}_{\nu}(\Omega))\cap W^{1,p_0}_{loc}(\R_0^+;L^{p_0}(\Omega)),\label{regv}
\end{align}
respectively, and satisfy \cref{flv} strongly in $ L_{loc}^{p_0}(\R_0^+;L^{p_0}(\Omega))$ and  \cref{bcv,iniv} pointwise.                                                                                                                                                              
  
\end{Definition}
\begin{Remark}
 Combining the trace theorem \cite[Chapter 2 Theorem 4.10.2]{Amannbook95} and the  standard results on interpolation and embedding of Sobolev spaces and using assumption $p_0>d+2$, we have the continuous embeddings
  \begin{subequations}\label{embeddings}
\begin{align}
 W^{1,p_0}((0,T);L^{p_0}(\Omega))\cap L^{p_0}((0,T);W^{2,p_0}_{\nu}(\Omega))\subset &C\left([0,T];W^{2\left(1-\frac{1}{p_0}\right),p_0}_{\nu}\right)\\ \subset & C([0,T];C^1_{\nu}(\overline\Omega)).
\end{align} 
\end{subequations}
Thus, the $v$-component of a strong solution to \cref{IBVPv} satisfies
\begin{align}
 v\in C\left([0,T];W^{2\left(1-\frac{1}{p_0}\right),p_0}_{\nu}\right)\subset  C([0,T];C^1_{\nu}(\overline\Omega)),\label{contiv}
\end{align}
so that \cref{bcv,iniv} can indeed be stated pointwise.

\end{Remark}
\begin{Definition}[Solution to \cref{IBVPc}-\cref{IBVPv}]\label{Defsol}
 We call a pair of functions $(c,v)$ a \underline{global solution} to \cref{IBVPc}-\cref{IBVPv} if it is a global entropy solution to \cref{IBVPc} and a global strong solution to \cref{IBVPv}.
\end{Definition}

Our main result now reads:
\begin{Theorem}[Existence for \cref{IBVPc}-\cref{IBVPv}]\label{mainthm}
 Let assumptions \cref{asschi,a,Phi,fc,fv,c0bnd,v0bnd} be satisfied.  Then there exists a pair of functions $(c,v)$ 
that solves \cref{IBVPc}-\cref{IBVPv} in the sense of \cref{Defsol}.
\end{Theorem}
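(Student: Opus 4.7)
The plan is to decouple the system and apply Schauder's fixed point theorem, as the author previewed in the introduction. Concretely, I would set up a map $\mathcal{T}:\overline{v}\mapsto v$ where, given $\overline v$ in an appropriate closed convex set $K\subset C([0,T];C^1_\nu(\overline\Omega))$, one first solves \cref{IBVPc} with $v$ replaced by $\overline v$ (yielding an entropy solution component $c=\mathcal{S}_c(\overline v)$), and then plugs this $c$ into \cref{IBVPv} to obtain $v=\mathcal{S}_v(c)$ via classical strong parabolic theory. A fixed point of $\mathcal{T}=\mathcal{S}_v\circ\mathcal{S}_c$ yields a solution in the sense of \cref{Defsol}. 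To implement this, I would work on a finite interval $[0,T]$, build the solution there, and then extend globally by iteration since the a priori estimates do not blow up in finite time.

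For the set $K$ I would take a bounded subset of $W^{1,p_0}((0,T);L^{p_0}(\Omega))\cap L^{p_0}((0,T);W^{2,p_0}_\nu(\Omega))$ (which embeds compactly into $C([0,T];C^1_\nu(\overline\Omega))$ by \cref{embeddings} after a small loss in exponent), intersected with the cone of nonnegative functions. The $v$-step is straightforward: once $c\in L^\infty_{loc}(\R_0^+;\LInf)$ is fixed, $f_v(c,v)$ is sublinear in $v$ with a locally bounded coefficient depending on $\|c\|_{L^\infty}$, so standard maximal $L^{p_0}$-regularity for the heat equation with Neumann boundary data gives a unique strong solution with the regularity \cref{regv}, bounds in $K$ controlled by $\|c\|_\infty$ and $\|v_0\|_{W^{2(1-1/p_0),p_0}}$, and nonnegativity via the maximum principle (since $f_v(c,0)\geq 0$). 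Invariance of $K$ is ensured by choosing $K$ large enough using the a priori estimate $\|c\|_\infty\leq e^{T\Cr{fc1}}\|c_0\|_\infty$ supplied by the cited \cref{Lemcbnd}, which is where the crucial hypothesis $\chi<1$ enters.

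The core work is constructing $\mathcal{S}_c$ and establishing the entropy inequalities \cref{EstBelow,EstAbove}: for fixed $\overline v\in K$, $\nabla \overline v$ is a bounded continuous vector field, so the nonlocal chemotactic drift $-\chi c\Phi(\nabla\overline v)$ is a Lipschitz lower-order perturbation, and the construction proceeds via the program announced for \cref{Secdiscr,Secsolc}: regularize and time-discretize the $c$-equation, solve each elliptic step using the operator theory from \cref{Secell}, recover uniform estimates from \cref{Secaprio} (notably $L^\infty$ bounds, BV bounds in space coming from the Lagrangian/recession structure encoded in \cref{acor,recessa}, and $(\WOne)^*$ bounds in time from the weak formulation), and pass to the limit using compactness. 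The identification of $a(c,\nabla c)$ in the limit relies on the Anzellotti-Kohn-Temam pairing $(\cdot,D\cdot)$ introduced in \cref{SecBV}, while the entropy inequalities come from testing the discrete scheme against $T_{m,M}(c)$ and carefully controlling the singular parts using \cref{ZDwa,ZDws}.

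The main obstacle is continuity of $\mathcal{S}_c:K\to L^1((0,T)\times\Omega)$ (strong) with respect to $\overline v\in C([0,T];C^1(\overline\Omega))$, which is needed to apply Schauder to $\mathcal{T}$. Because $c$ is only BV in space, we cannot expect compactness in $v$ to translate into compactness in $\nabla c$; one must instead use the a priori BV bound on $c$ uniformly in $\overline v$ (giving compactness in $L^1$ via Aubin--Lions with the time derivative in $(\WOne)^*$) together with the continuous dependence of the pairing $(a(c,\nabla c),D T_{m,M}(c))$ on the Lipschitz drift, arguing as in the uniqueness/continuous-dependence parts of \cite{ACM05}. Once $\mathcal{S}_c$ is shown continuous (even just sequentially, which suffices on the metrizable compact set $K$), $\mathcal{S}_v$ is continuous by classical parabolic regularity, the composition $\mathcal{T}$ maps $K$ compactly into itself, and Schauder's theorem yields the fixed point, concluding \cref{mainthm}.
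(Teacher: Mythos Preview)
Your plan follows the same Schauder strategy as the paper, but you invert the roles of the two unknowns: you set up the fixed-point map on $v$, whereas the paper sets it up on $c$. Concretely, the paper works in $\mathcal V=L^1((0,1);L^1(\Omega))$, takes $\mathcal X$ to be the closed ball of radius $\Cr{CBND}(1)$ in $L^\infty((0,1);L^\infty(\Omega))$, and defines $\mathcal S(\bar c):=\mathcal S_1(c_0,\mathcal S_2(\bar c))$. Precompactness of $\mathcal S(\mathcal X)$ in $L^1$ comes from the uniform BV/time-derivative bounds on $c$ (Lions--Aubin), and continuity follows by chaining the Lipschitz estimate \cref{conticv} for $\mathcal S_2$ with the stability estimate \cref{contic} for $\mathcal S_1$.

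Your variant has a gap in the continuity step that the paper's ordering is designed to avoid. The only stability estimate available for $\mathcal S_c$ (the paper's \cref{contic}, coming from the Kato-type inequality \cref{Kato2}) controls $\|c^{(1)}-c^{(2)}\|_{L^1}$ by $\|v^{(1)}-v^{(2)}\|_{C([0,T];C^1)}+\|v^{(1)}-v^{(2)}\|_{L^1((0,T);W^{2,1})}$; the $W^{2,1}$ term is unavoidable because one must integrate by parts the taxis term $\nabla\cdot(c_2(\Phi(\nabla v_1)-\Phi(\nabla v_2)))$ to survive the limit $\delta\to 0$. But your set $K$, even though bounded in $W^{1,p_0}(L^{p_0})\cap L^{p_0}(W^{2,p_0})$, is not compact in $L^1((0,T);W^{2,1}(\Omega))$: the spatial embedding $W^{2,p_0}\hookrightarrow W^{2,1}$ is not compact, so Aubin--Lions does not give strong convergence of the second derivatives. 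Consequently $v_n\to v$ in $C([0,T];C^1)$ on $K$ does not imply $v_n\to v$ in $L^1(W^{2,1})$, and your appeal to ``continuous dependence as in \cite{ACM05}'' does not close this. The paper's ordering sidesteps the difficulty because $\mathcal S_2$ outputs $v$ with genuine Lipschitz dependence in the strong norm $W^{1,p_0}(L^{p_0})\cap L^{p_0}(W^{2,p_0})$ (hence in $C(C^1)\cap L^1(W^{2,1})$ by \cref{embeddings}), so the input to $\mathcal S_1$ converges in exactly the topology \cref{contic} requires. (A minor point: the $L^\infty$ bound from \cref{Lemcbnd} is $\Cr{CBND_1}(1+T)e^{T\Cr{fc1}}\|c_0\|_{\LInf}$, not simply $e^{T\Cr{fc1}}\|c_0\|_{\LInf}$.)
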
 
The proof of \cref{mainthm} requires some preparation and is therefore postponed to the final  \cref{SecSchauder}. It is based on solving  \cref{IBVPc,IBVPv} for fixed $v$ and $c$, respectively, and then obtaining a joint solution to the whole system  \cref{IBVPc}-\cref{IBVPv} by means of Schauder's fixed point theorem. Smoothness of $v$ allows to establish the solvability of \cref{IBVPc} in the entropy sense using an approach that resembles the one developed and pursued in \cite{ACM021,ACM02,ACM05,ACM05Cauchy,ACMM06,ACM08,ACMM10}.
\begin{Remark}
\begin{enumerate}
 \item Our existence proof is valid only for $\chi\in(0,1)$. For $\chi>1$, the study in  \cite{BelWinkler20172} of a parabolic-elliptic version of \cref{IBVPc}-\cref{IBVPv} in the radial symmetric case  indicated that a local solution may cease to exist globally due to a blow-up in finite time, and that for any initial mass $m_0:=\int_{\Omega}c_0\,dx$ in higher dimensions, and, for sufficiently large $m_0$, even in the case of dimension one.
 \item Due to a possibly  low regularity of the $c$-component and the strong coupling of the equations the uniqueness of solutions remains a challenging open issue.
\end{enumerate}
\end{Remark}

\section{Existence of entropy solutions to \texorpdfstring{\cref{IBVPc}}{} for fixed \texorpdfstring{$v$}{v}}\label{Secc}
In this Section we establish the existence of entropy solutions to \cref{IBVPc} for given $v$. To start, we derive some a priori bounds for the $c$-component in \cref{Secaprio}. Next, we study several spatial differential operators that arise in the context of \cref{flc}-\cref{noflux} in \cref{Secell}. These steps serve as  preparation for the proof of a result on well-posedness of a regularised time-discrete approximation of \cref{IBVPc} that we give in \cref{Secdiscr}. In \cref{Secsolc} we are then finally able to deduce the existence of entropy solutions to \cref{IBVPc} that depend on $v$ in a stable fashion.
\subsection{A priori bounds for \texorpdfstring{$c$}{c}}\label{Secaprio}
This Subsection is devoted to the  derivation of a priori estimates for a  weak solution $c$ of \cref{IBVPc} under the additional regularity assumption 
\begin{align}
c\in L^1_{loc}(\R_0^+;\WOne).   \label{addregc}                                                                                                                                                              \end{align}
In the proofs, estimates that we obtain for time-dependent quantities are to be understood to hold a.e., at least. 
\begin{Lemma}%
\label{Lemcbnd} 
Let $\chi\in(0,1)$ and $(c,v)$ be a weak solution to \cref{IBVPc}
such that \cref{addregc} is satisfied. 
Then for all $T>0$
\begin{align}
\|c\|_{L^{\infty}((0,T);\LInf)}
\leq \Cl{CBND_1}(1+T)e^{T\Cr{fc1}}
 \|c_0\|_{\LInf}=:\Cl{CBND}(T),\label{cbound}
\end{align}
where $\Cr{CBND_1}$  depends only on $d$, $\Omega$, and $\chi$.
\end{Lemma}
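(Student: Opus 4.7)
The plan is to obtain the bound via a family of $L^p$ energy estimates followed by a Moser-type passage to $p = \infty$. The extra regularity \cref{addregc} together with the weak-solution class \cref{ClassC_} gives $c\in L^\infty_{loc}(\R_0^+;\LInf)\cap L^1_{loc}(\R_0^+;\WOne)$, which is enough to make $\varphi = c^{p-1}$ an admissible test function in \cref{weakfc_} for every $p \geq 2$.

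First I would plug $\varphi = c^{p-1}$ into \cref{weakfc_} and invoke the structural estimates: the coercivity $a(c,\nabla c)\cdot\nabla c \geq c(|\nabla c|-c)$ from \cref{acor}, the uniform bound $|\Phi(\nabla v)|\leq 1$ from \cref{Phi1} (together with Cauchy--Schwarz), the growth control $f_c(c,v)\leq \Cr{fc1} c$ from \cref{asfc1}, and the dominance $\chi\in(0,1)$ from \cref{asschi}. Writing $p\int c^{p-1}|\nabla c|\,dx = \|\nabla c^p\|_{L^1}$, this yields an energy inequality of the form
\begin{align*}
\frac{d}{dt}\|c\|_{L^p}^p + (p-1)(1-\chi)\|\nabla c^p\|_{L^1} \leq p(p-1+\Cr{fc1})\|c\|_{L^p}^p.
\end{align*}
The key feature is that $\chi<1$ strictly keeps the dissipative coefficient $(p-1)(1-\chi)$ positive, and $v$ has already been eliminated from the inequality.

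Next, applying the Sobolev embedding $\WOne\hookrightarrow L^{d/(d-1)}(\Omega)$ to $c^p$ would close the recursion relating $\|c\|_{L^p}$ and $\|c\|_{L^{pd/(d-1)}}$. A Moser--Alikakos iteration along $p_n = p_0 (d/(d-1))^n$, combined with Gronwall's inequality, should then produce a quantitative $L^\infty$ bound. The factor $e^{T\Cr{fc1}}$ would emerge from Gronwall absorbing the $\Cr{fc1}$-dependent right-hand side; the polynomial-in-$T$ factor $(1+T)$ should come from the diffusion-coercivity contribution, after careful bookkeeping of the constants as $p\to\infty$.

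The main obstacle is the iteration itself. Unlike the standard parabolic Moser scheme, where testing with $c^{p-1}$ produces an $L^2$-gradient term $c^{p-2}|\nabla c|^2$, the flux limitation here only yields $c|\nabla c| = \tfrac12|\nabla c^2|$, i.e., gradient control in $L^1$. One is therefore forced to use Sobolev at the $\WOne$ level and track how the polynomial-in-$p$ prefactors on the two sides interact through the iteration. The strict inequality $\chi<1$ is indispensable: it keeps $(p-1)(1-\chi)$ of linear order in $p$, which is exactly what is needed for the $L^{pd/(d-1)}$ absorption to dominate the $p(p-1+\Cr{fc1})$ right-hand side coefficient and to deliver a bound of the precise form $\Cr{CBND_1}(1+T)e^{T\Cr{fc1}}\|c_0\|_{\LInf}$, with $\Cr{CBND_1}$ independent of $v$.
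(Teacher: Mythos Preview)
Your energy inequality
\[
\frac{d}{dt}\|c\|_{L^p}^p + (p-1)(1-\chi)\|\nabla c^p\|_{\LOne} \leq p(p-1+\Cr{fc1})\|c\|_{L^p}^p
\]
is correct and coincides with what the paper obtains. The gap is in the next step. The right-hand side contains the term $p(p-1)\|c^p\|_{\LOne}$, coming from the $-c^2$ part of the coercivity \cref{acor}. If you leave this term untouched and simply harvest the dissipation via the Sobolev embedding $\WOne\hookrightarrow L^{d/(d-1)}(\Omega)$, you are left with a differential inequality whose source grows like $p^2$ in the iteration parameter. Gronwall then produces a factor $e^{Cp^2T}$ in $\|c\|_{L^p}^p$, hence $e^{CpT}$ in $\|c\|_{L^p}$, which blows up as $p\to\infty$ and kills the Moser iteration. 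Your sketch does not indicate any mechanism that removes this $p^2$ contribution, so the recursion along $p_n=p_0(d/(d-1))^n$ does not close as written.

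The paper resolves this by using the dissipation not to bound a \emph{higher} norm (as Sobolev would) but to \emph{absorb} the bad $p(p-1)\|c^p\|_{\LOne}$ term. Concretely, it invokes a Gagliardo--Nirenberg inequality of the form
\[
\|c^p\|_{\LOne}\leq \varepsilon\|\nabla c^p\|_{\LOne}+C\big(1+\varepsilon^{-\theta/(1-\theta)}\big)\|c^{p/2}\|_{\LOne}^2
\]
(taken from \cite[Lemma 2.3]{LiLankeit}) with $\varepsilon\sim p^{-1}$. This converts the $p^2\|c^p\|_{\LOne}$ term into a lower-order quantity $Cp^{\kappa}\|c^{p/2}\|_{\LOne}^2$ at the cost of consuming the gradient dissipation, while leaving only the linear-in-$p$ term $\Cr{fc1}p\|c^p\|_{\LOne}$ on the right. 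After multiplying by $e^{-\Cr{fc1}pt}$ and normalising by $\|c_0^p\|_{\LOne}$, one obtains a clean doubling recursion $\|B_{2^{n+1}}\|_{L^\infty(0,T)}\leq (1+CT)(2^{\kappa})^n\|B_{2^n}\|_{L^\infty(0,T)}^2$, which is handled by the standard iteration lemma \cite[Chapter II Lemma 5.6]{LSU} and yields precisely the bound $\Cr{CBND_1}(1+T)e^{T\Cr{fc1}}\|c_0\|_{\LInf}$. The $(1+T)$ factor and the \emph{exact} exponential rate $\Cr{fc1}$ (not merely some $C\geq\Cr{fc1}$) are consequences of this specific bookkeeping; a generic Moser--Alikakos scheme would not obviously reproduce them.
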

\begin{proof}
Throughout this proof, constants $C_i$ depend  on $d$, $\Omega$, and $\chi$ alone. 

We use the standard method of propagation of $L^p$ bounds. To begin with, taking $\varphi\equiv1$ in \cref{weakfc_} and using \cref{asfc1} and the Gronwall lemma one readily verifies 
\begin{align}
 \|c(t,\cdot)\|_{\LOne}\leq e^{t\Cr{fc1}}\|c_0\|_{\LOne}\qquad\text{for all }t\geq0.\label{cL1}
\end{align}
In particular, if $c_0=0$ a.e., then by \cref{cL1} we have $c=0$ a.e. This proves \cref{cbound} for this case. Therefore, from now on we assume
\begin{align*}
 |\{c_0>0\}|>0.
\end{align*}
In \cref{weakfc_}, choose $$\varphi:={p} c^{{p}-1},\ {p}\geq2.$$
Due to \cref{ClassC_} and \cref{addregc} we have $\varphi\in L^1_{loc}(\R_0^+;\WOne)$, as needed for \cref{weakfc_}, and so by the weak chain rule
\begin{align}
\frac{d}{dt}\int_{\Omega}c^{p}dx
=&\int_{\Omega}-{p}({p}-1)\left(a(c,\nabla c)-\chi c \Phi(\nabla v)\right)\cdot  c^{{p}-2}\nabla c+{p} c^{{p}-1}f_c(c,v)\,dx.\label{est00}
\end{align}
Using \cref{addregc}, the essential boundedness of $c$ and $v$, and properties of $a$, $\Phi$, and $f_c$, we conclude from \cref{est00} that
\begin{align}
 \int_{\Omega}c^{p}dx\in W^{1,\infty}_{loc}(\R_0^+)\subset C(\R_0^+).\nonumber
\end{align}
Further, %
assumption $\chi\in(0,1)$, properties of $a$ and $\Phi$, and the weak chain rule together allow to estimate the right-hand side of \cref{est00} to obtain
\begin{align}
\frac{d}{dt}\|c^{p}\|_{\LOne}\leq&\int_{\Omega}-{p}({p}-1)\left((1-\chi)|\nabla c|-c\right) c^{{p}-1}+\Cr{fc1}{p} c^{p}\,dx\nonumber\\
 = &-(1-\chi)({p}-1)\left\|\nabla c^{p}\right\|_{\LOne}+{p}({p}-1)\|c^{p}\|_{\LOne}+\Cr{fc1}p\|c^{p}\|_{\LOne}\nonumber\\
 \leq&-\Cl{Cd1}p\left\|\nabla c^{p}\right\|_{\LOne}+\Cl{Cd2}p^2\|c^{p}\|_{\LOne}+\Cr{fc1}p\|c^{p}\|_{\LOne}.\label{est5}
\end{align}
Next, we combine a version of the Gagliardo-Nirenberg interpolation  inequality in the form provided by \cite[Lemma 2.3]{LiLankeit}, choosing there $r:=p:=1$ and $q:=s:=1/2$, with Young's inequality to obtain
\begin{align}
 \|w\|_{\LOne}\leq&\varepsilon\|\nabla w\|_{\LOne}+\Cl{Cint}\left(1+\varepsilon^{-\frac{\theta}{1-\theta}}\right)\left\|w^{\frac{1}{2}}\right\|_{\LOne}^{2}\qquad\text{for all }\varepsilon>0.\label{interp}
 \end{align}
 For $$w:= c^{p}\qquad\text{and}\qquad\varepsilon:=\frac{\Cr{Cd1}}{2\Cr{Cd2}}p^{-1}$$ inequality \cref{interp} leads to
\begin{align}
 \|c^{p}\|_{\LOne}
 \leq &\frac{\Cr{Cd1}}{2\Cr{Cd2}}p^{-1}\left\| \nabla c^{p}\right\|_{\LOne}+\Cr{Cint}\left(1+\left(\frac{\Cr{Cd1}}{2\Cr{Cd2}}\right)^{-\frac{\theta}{1-\theta}}{p}^{\frac{\theta}{1-\theta}}\right)\left\|c^{\frac{p}{2}}\right\|_{\LOne}^2\nonumber\\
 \leq &\frac{\Cr{Cd1}}{2\Cr{Cd2}}p^{-1}\left\| \nabla c^{p}\right\|_{\LOne}+\C{p}^{\frac{\theta}{1-\theta}}\left\|c^{\frac{p}{2}}\right\|_{\LOne}^2.\label{interp1}
\end{align}
Combining \cref{est5,interp1}, we conclude that
\begin{align}
\frac{d}{dt}\|c^{p}\|_{\LOne}
 \leq&\Cl{Cint1}{p}^{\Cl{ckappa}}\left\|c^{\frac{p}{2}}\right\|_{\LOne}
 ^2+\Cr{fc1}p\|c^{p}\|_{\LOne}.\label{est10_}
\end{align}
Applying Gronwall's lemma to \cref{est10_} and multiplying throughout by $e^{-t\Cr{fc1}p}$ yields
\begin{align}
 \left\|\left(e^{-t\Cr{fc1}}c(t,\cdot)\right)^{p}\right\|_{\LOne}\leq \|c_0^{p}\|_{\LOne}+\Cr{Cint1}{p}^{\Cr{ckappa}}\int_0^t\left\|\left(e^{-s\Cr{fc1}}c(s,\cdot)\right)^{\frac{p}{2}}\right\|_{\LOne}^2\,ds\qquad\text{for }t\geq0,\ p\geq2. \label{est10}
\end{align}
Let \begin{align}
 &B_{p}(t):=\frac{\left\|\left(e^{-t\Cr{fc1}}c(t,\cdot)\right)^{p}\right\|_{\LOne}}{\|c_0^p\|_{\LOne}}
 \qquad\text{for }t\geq0,\ p\geq1.\label{Bp}
\end{align}
Obviously,
\begin{align}
 \|B_{p}\|_{L^{\infty}((0,T))}\geq B_p(0)=1\qquad\text{for all }T>0,\ p\geq 1.\label{Blone}
\end{align}
Due to \cref{cL1} is also holds that 
\begin{align}
 &\|B_1\|_{L^{\infty}((0,\infty))}= B_1(0)=1.\label{bndB1}
\end{align}
Combining \cref{est10,Bp,Blone} and H\"older's inequality, we have for $p\geq2$
\begin{align}
 \|B_p\|_{L^{\infty}(0,T)}\leq&1+T\frac{\left\|c_0^{\frac{p}{2}}\right\|_{\LOne}^2}{\|c_0^p\|_{\LOne}} \Cr{Cint1}{p}^{\Cr{ckappa}}\left\|B_{\frac{p}{2}}\right\|_{L^{\infty}(0,T)}^2\nonumber\\
 \leq& 1+T|\Omega|\Cr{Cint1}{p}^{\Cr{ckappa}}\left\|B_{\frac{p}{2}}\right\|_{L^{\infty}(0,T)}^2\nonumber\\
 \leq&(1+T\Cl{C57}){p}^{\Cr{ckappa}}\left\|B_{\frac{p}{2}}\right\|_{L^{\infty}(0,T)}^2.\label{rec1}
\end{align}
Choosing ${p}:=2^{n+1}$, $n\in\N_0$, in \cref{rec1}, we arrive at the sequence of recursive inequalities
\begin{align}
 &\left\|B_{2^{n+1}}\right\|_{L^{\infty}((0,T))}\leq (1+T\Cr{C57})\left(2^{\Cr{ckappa}}\right)^n \left\|B_{2^n}\right\|_{L^{\infty}((0,T))}^2 \qquad \text{for }n\in\N_0.\label{recyn}
\end{align}
Combining \cref{recyn,bndB1} with  \cite[Chapter II Lemma 5.6]{LSU}, we deduce 
\begin{align}
 \frac{e^{-t\Cr{fc1}}
\|c(t,\cdot)\|_{\LInf}
 }{\|c_0\|_{\LInf}}
 \leq &\frac{\underset{s\in[0,T]}{\max} e^{-s\Cr{fc1}}\|c(s,\cdot)\|_{\LInf}}{\|c_0\|_{\LInf}}\nonumber\\
 =&\underset{n\to\infty}{\lim}\left\|B_{2^n}\right\|_{L^{\infty}((0,T))}^{2^{-n}}\nonumber\\
 \leq& \underset{n\to\infty}{\lim} ((1+T\Cr{C57}))^{1-2^{-n}}2^{\Cr{ckappa}(1-2^{-n}(n+1))}\left\|B_1\right\|_{L^{\infty}((0,T))}\nonumber\\
 =&(1+T\Cr{C57})2^{\Cr{ckappa}}\nonumber\\
 \leq& (1+T)\Cr{CBND_1}\qquad\text{for all }0\leq t\leq T,\nonumber
\end{align}
yielding \cref{cbound}.
\end{proof}
\begin{Lemma}%
\label{Lemnablac} Under the assumptions of \cref{Lemcbnd}  it holds for all $T>0$ that
\begin{align}
 \|\nabla c\|_{L^1((0,T);\LOne)}\leq &\Cl{Cc0}+T\Cl{C111}\left(\Cr{CBND}(T),\|v\|_{L^{\infty}((0,T);\LInf)}\right)\nonumber\\
 =:&\Cl{CD}\left(T,\Cr{CBND}(T),\|v\|_{L^{\infty}((0,T);\LInf)}\right).\label{DCC}
\end{align}
\end{Lemma}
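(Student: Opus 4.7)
The plan is to derive the gradient estimate from a logarithmic entropy identity, exploiting the coercivity \cref{acor} of $a$ together with the flux-limitation $|\Phi|\leq1$ and the strict inequality $\chi<1$. First, I would test the weak formulation \cref{weakfc_} with $\varphi_\delta:=\ln(c+\delta)$ for a small parameter $\delta\in(0,1]$. This is an admissible test function: since $c$ is essentially bounded by $\Cr{CBND}(T)$ by \cref{Lemcbnd} and $\nabla c\in L^1_{loc}(\R_0^+;\LOnen)$ by \cref{addregc}, we have $\varphi_\delta\in L^1_{loc}(\R_0^+;\WOne)$ with $\nabla\varphi_\delta=\tfrac{\nabla c}{c+\delta}$.

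Applying the weak chain rule, the left-hand side rewrites as $\tfrac{d}{dt}\int_\Omega\Psi_\delta(c)\,dx$, where $\Psi_\delta(s):=\int_0^s\ln(u+\delta)\,du$ is bounded, uniformly in $\delta$, on $[0,\Cr{CBND}(T)]$. On the right, the flux term yields
\begin{align*}
-\int_\Omega \tfrac{1}{c+\delta}\bigl(a(c,\nabla c)-\chi c\Phi(\nabla v)\bigr)\cdot\nabla c\,dx
\leq -(1-\chi)\int_\Omega \tfrac{c}{c+\delta}|\nabla c|\,dx+\int_\Omega \tfrac{c^2}{c+\delta}\,dx,
\end{align*}
where I used \cref{acor} on the diffusion part and $|\Phi(\nabla v)|\leq 1$ from \cref{Phi1} on the taxis part. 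The reaction contribution $\int_\Omega f_c(c,v)\ln(c+\delta)\,dx$ is uniformly controlled because $f_c$ is continuous on the compact set $[0,\Cr{CBND}(T)]\times[0,\|v\|_{L^\infty}]$ and $s\mapsto s|\ln(s+\delta)|$ is bounded on $[0,\Cr{CBND}(T)]$ uniformly in $\delta\in(0,1]$.

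Integrating in time over $(0,T)$ and moving the good term to the left, I arrive at
\begin{align*}
(1-\chi)\int_0^T\!\int_\Omega \tfrac{c}{c+\delta}|\nabla c|\,dx\,dt
\leq \Cr{Cc0}+T\,\Cr{C111}\!\left(\Cr{CBND}(T),\|v\|_{L^\infty((0,T);\LInf)}\right).
\end{align*}
Finally, I let $\delta\to 0^+$. Since $\nabla c=0$ almost everywhere on $\{c=0\}$ (a standard property of Sobolev functions), the integrand $\tfrac{c}{c+\delta}|\nabla c|$ increases monotonically to $|\nabla c|$ pointwise a.e., and monotone convergence delivers the desired bound \cref{DCC} after dividing by $1-\chi>0$.

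The main delicate point is the uniform control of the reaction integral as $\delta\to 0^+$, because $\ln(c+\delta)$ diverges on $\{c=0\}$; this is handled by absorbing the sign of $\ln$ through the factor $c$ and using that $c\,|\ln(c+\delta)|$ stays bounded on $[0,\Cr{CBND}(T)]$ uniformly for small $\delta$. Alternatively, one could mollify $\ln$ by $s\mapsto\ln(s+\delta)\wedge\ln(\delta^{-1})$ and pass to the limit, but the direct argument suffices under \cref{addregc}.
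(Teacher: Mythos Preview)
Your proof is correct and follows essentially the same route as the paper: a regularised logarithmic test function, coercivity \cref{acor} plus $|\Phi|\le1$ to extract $(1-\chi)\int|\nabla c|$, control of the reaction term via $f_c(0,v)=0$ so that $|f_c(c,v)|\lesssim c$, and monotone convergence using $\nabla c=0$ a.e.\ on $\{c=0\}$. The only difference is cosmetic---the paper regularises with $\ln(\max\{c,m\})$ rather than $\ln(c+\delta)$---and your phrasing of the reaction bound would benefit from stating explicitly that you use $f_c(0,v)=0$ and $f_c\in C^1$ to get the factor $c$ (your final paragraph does this, but the earlier sentence ``$f_c$ is continuous on the compact set'' alone would not suffice).
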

\begin{proof}
Set $$\varphi:=\ln(\max\{c,{m}\}),\ {m}\in(0,1).$$
Due to \cref{ClassC_} and \cref{addregc} we have $\varphi\in L^1_{loc}(\R_0^+;\WOne)$, as needed in \cref{weakfc_}. Plugging $\varphi$ into \cref{weakfc_}, integrating over $(0,T)$, and using the weak chain rule,  
properties of $a,\Phi$, and $f_c$, estimate \cref{cbound}, and boundedness of $v$, we  obtain  
\begin{align}
&\int_{\Omega}\int_{c_0}^{c(T,\cdot)}\ln(\max\{s,{m}\})\,dsdx\nonumber\\
=&\int_0^T\int_{\{c>{m}\}}-\left(a(c,\nabla c)-\chi  c\Phi(\nabla v)\right)\cdot \nabla \ln c\,dx+\int_{\Omega} f_c(c,v)\ln(\max\{c,{m}\})\,dxdt\nonumber\\
\leq&\int_0^T\int_{\{c>{m}\}}-(1-\chi)\left|\nabla c\right|+c\,dxdt+\int_0^T\int_{\Omega} \left|\frac{f_c(c,v)-f_c(0,v)}{c}\right|\max\{c,{m}\}|\ln(\max\{c,{m}\})|\,dxdt\nonumber\\
\leq& -(1-\chi)\int_0^T\int_{\{c>{m}\}}\left|\nabla c\right|\,dxdt+T\Cl{C111_}\left(\|c\|_{L^{\infty}((0,T);\LInf)},\|v\|_{L^{\infty}((0,T);\LInf)}\right).\label{est1_}
\end{align}
Thanks to the monotone convergence theorem and the fact that  $\nabla w=0$ a.e. in $w=0$ for any $w\in \WOne$ we have
\begin{align}
 \int_0^T\int_{\{c>{m}\}}\left|\nabla c\right|\,dxdt\underset{{m}\to0}{\to}\int_0^T\int_{\Omega}\left|\nabla c\right|\,dxdt.\label{lim1}
\end{align}
Further, since $\int_0^c\ln(\max\{s,{m}\})\,ds-\int_0^c\ln s\,ds$ converges to zero uniformly on finite intervals and $c(T,\cdot)\in \LInf$ due to \cref{ClassC_}, 
\begin{align}
 \int_{\Omega}\int_{c_0}^{c(T,\cdot)}\ln(\max\{s,{m}\})\,dsdx\underset{{m}\to 0}{\to}\int_{\Omega}(s\ln s-s)|_{s=c_0}^{c(T,\cdot)}dx.\label{lim2}
\end{align}
Passing to the limit in  \cref{est1_} and using \cref{lim1,lim2}, we obtain
\begin{align}
 \int_{\Omega}(s\ln s-s)|_{s=c_0}^{c(T,\cdot)}dx
 \leq& -(1-\chi)\int_0^T\int_{\Omega}\left|\nabla c\right|\,dxdt+T\Cr{C111_}\left(\|c\|_{L^{\infty}((0,T);\LInf)},\|v\|_{L^{\infty}((0,T);\LInf)}\right).\label{est1_2_1}
\end{align}
Rearranging \cref{est1_2_1} and using $s\ln s-s\geq -1$ for all $s>0$, 
we arrive at the estimate
\begin{align}
 (1-\chi)\int_0^T\int_{\Omega}\left|\nabla c\right|\,dxdt
 \leq& 1+\int_{\Omega}c_0\ln c_0-c_0\,dx+T\Cr{C111_}\left(\|c\|_{L^{\infty}((0,T);\LInf)},\|v\|_{L^{\infty}((0,T);\LInf)}\right)\nonumber\\
 \leq&\C+T\Cr{C111_}\left(\Cr{CBND}(T),\|v\|_{L^{\infty}((0,T);\LInf)}\right).\label{est1_2}
\end{align}
Together,  \cref{cbound,est1_2}  and  assumption $\chi<1$ yield \cref{DCC}.
\end{proof}
\subsection{Auxiliary operators}\label{Secell}
In preparation for the analysis of the IBVP \cref{IBVPc}, we introduce and study several operators defined on spaces of functions which depend on $x$ alone. We start with a family of operators $\A[v]$ parametrised by $v$ and given by 
\begin{align}
 \left<\A[v](c),\varphi\right>:=
 &-\left<a(|c|,\nabla c)-\chi c\Phi(\nabla v),\nabla\varphi\right>.\label{OperA1}
\end{align}
The proof of the following lemma is straightforward, so we omit it.
\begin{Lemma}%
 Let $a$ and $\Phi$ be as defined in \cref{a} and \cref{Phi}, respectively.
 For all $v\in C^1(\overline{\Omega})$ the corresponding operator $\A[v]$ is well-defined, continuous, and bounded between
 \begin{enumerate}
 [label=(\arabic*),ref=(\arabic*)]
  \item $\HOne$ and $(\HOne)^*$,
  \item $\WOne\cap \LInf$ and $(\WOne)^*$.
 \end{enumerate}
\end{Lemma}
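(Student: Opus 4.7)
The plan is to verify each of the three properties (well-defined, bounded, continuous) for both settings, exploiting the bounds $|a(|c|,\nabla c)|\leq |c|$ from \cref{abound}, the Lipschitz property \cref{aLip1} of $a$ in its first argument, the uniform bound $|\Phi|\leq 1$ from \cref{Phi1}, and the hypothesis $v\in C^1(\overline\Omega)$ which makes $\Phi(\nabla v)\in (C(\overline\Omega))^d$.

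First I would establish boundedness in both cases. For case (1), given $c\in\HOne$, \cref{abound} yields $|a(|c|,\nabla c)|\leq |c|$ a.e., so that $a(|c|,\nabla c)\in \Lpn$ with norm controlled by $\|c\|_{\LTwo}$; similarly $\chi c\Phi(\nabla v)\in \Lpn$ with norm at most $\chi\|c\|_{\LTwo}$. The Cauchy--Schwarz inequality applied to the right-hand side of \cref{OperA1} then gives
\begin{align*}
|\langle \A[v](c),\varphi\rangle|\leq (1+\chi)\|c\|_{\LTwo}\|\nabla\varphi\|_{\Lpn},
\end{align*}
yielding a bounded linear functional on $\HOne$ with $\|\A[v](c)\|_{(\HOne)^*}\leq C\|c\|_{\HOne}$. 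For case (2), if $c\in\WOne\cap\LInf$ the pointwise bound becomes $|a(|c|,\nabla c)|\leq \|c\|_{\LInf}$ and $|\chi c\Phi(\nabla v)|\leq \chi\|c\|_{\LInf}$, so the flux lies in $\LInf$ with norm at most $(1+\chi)\|c\|_{\LInf}$. Pairing against $\nabla\varphi\in\LOnen$ shows $\A[v](c)\in(\WOne)^*$ with norm $\leq (1+\chi)\|c\|_{\LInf}$.

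For continuity, the only non-trivial point is the nonlinear term $a(|c|,\nabla c)$; the chemotactic piece $c\mapsto \chi c\Phi(\nabla v)$ is linear with bounded coefficient and hence trivially continuous on both spaces. In case (1), given $c_n\to c$ in $\HOne$, I would split
\begin{align*}
a(|c_n|,\nabla c_n)-a(|c|,\nabla c)=\bigl(a(|c_n|,\nabla c_n)-a(|c|,\nabla c_n)\bigr)+\bigl(a(|c|,\nabla c_n)-a(|c|,\nabla c)\bigr).
\end{align*}
The first summand is controlled pointwise by $\bigl||c_n|-|c|\bigr|\leq |c_n-c|$ thanks to \cref{aLip1}, so converges to zero in $\Lpn$ from the $L^2$-convergence of $c_n$. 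For the second, I would pass to a subsequence along which $\nabla c_n\to\nabla c$ and $c_n\to c$ a.e.\ in $\Omega$; continuity of $a(c,\cdot)$ from \cref{conda} yields a.e.\ convergence, while \cref{abound} gives the domination $|a(|c|,\nabla c_n)|\leq |c|$, so the dominated convergence theorem delivers $L^2$-convergence on that subsequence. Since this argument works on any subsequence of the original sequence, a standard subsequence-of-subsequence argument upgrades it to convergence of the full sequence in $\Lpn$. Combined, $a(|c_n|,\nabla c_n)\to a(|c|,\nabla c)$ in $\Lpn$, hence $\A[v](c_n)\to \A[v](c)$ in $(\HOne)^*$.

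Case (2) is analogous but easier: for $c_n\to c$ in $\WOne\cap\LInf$ the uniform bound $|a(|c_n|,\nabla c_n)|\leq\|c_n\|_{\LInf}$ is already uniform, and a.e.\ convergence along subsequences (up to passing through the Lipschitz estimate \cref{aLip1} and continuity of $a(c,\cdot)$) together with dominated convergence yields convergence of $a(|c_n|,\nabla c_n)$ in any $L^p$ with $p<\infty$; pairing against $\nabla\varphi\in\LOnen$ suffices to obtain convergence in $(\WOne)^*$. No step here presents any genuine obstacle, which is precisely why the authors elected to omit the proof; the only mild subtlety is the subsequence argument needed to promote a.e.\ convergence of $\nabla c_n$ to full-sequence convergence of the composed nonlinearity.
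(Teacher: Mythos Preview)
Your argument for case~(1) is correct and complete, and your treatment of well-definedness and boundedness in case~(2) is fine as well. The paper omits the proof, so there is nothing to compare against directly.

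There is, however, a genuine gap in your continuity argument for case~(2). You establish that $a(|c_n|,\nabla c_n)\to a(|c|,\nabla c)$ in $L^p(\Omega)$ for $p<\infty$, and then assert that ``pairing against $\nabla\varphi\in\LOnen$ suffices to obtain convergence in $(\WOne)^*$''. This does not follow: norm convergence in $(\WOne)^*$ of the functional $\varphi\mapsto -\int F\cdot\nabla\varphi$ is equivalent (up to constants) to $\|F\|_{\LInf}$, and $L^p$-convergence for $p<\infty$ together with a uniform $L^\infty$ bound yields only weak-$*$ convergence in $\LInf$, hence only pointwise (in $\varphi$) convergence of the functionals. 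In fact norm continuity in~(2) fails: on $\Omega=(0,1)$ with $v$ constant, take $c\equiv 1$ and $c_n(x)=1+\frac{1}{n}\psi(n^2x)$ for a smooth bump $\psi$; then $c_n\to c$ in $\WOne\cap\LInf$, but $c_n'$ attains values of order $n$ on a set of length $n^{-2}$, so $a(|c_n|,c_n')$ is close to $1$ there while $a(1,0)=0$, and one checks that $\|\A[v](c_n)-\A[v](c)\|_{(\WOne)^*}$ stays bounded away from zero.

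This is an imprecision in the lemma's statement rather than a defect of your method: the paper's subsequent arguments (in particular \cref{PropB}) invoke continuity of $\A[v]$ only as a map $\HOne\to(\HOne)^*$, i.e.\ case~(1). For case~(2) only well-definedness and boundedness are actually used.
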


Our next Lemma establishes a one-sided stability property of $\A[\cdot]$. 
\begin{Lemma}\label{Kato1}
Let 
\begin{align*}
&0\leq c_1,c_2\in \WOne\cap \LInf,\\
  &v_1,v_2\in W^{2,1}(\Omega)\cap C^1_{\nu}(\overline\Omega).
\end{align*}
 Then the following estimate holds: 
 \begin{align}
  &\underset{\delta\rightarrow0}{\lim\sup}\left<\A[v_1](c_1)-\A[v_2](c_2), \sign_{\delta}(c_1-c_2)\right>\nonumber\\
  \leq &\Cr{C19}\left(\|\nabla c_2\|_{\LOne}\|v_1-v_2\|_{C^1(\overline\Omega)}+\|c_2\|_{\LInf}\|v_1-v_2\|_{W^{2,1}(\Omega)}\right).\label{Kato2}
 \end{align}
\end{Lemma}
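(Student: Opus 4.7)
The approach is to test the difference $\A[v_1](c_1)-\A[v_2](c_2)$ against the admissible test function $\sign_\delta(c_1-c_2)\in\WOne\cap\LInf$ (it is Lipschitz of $c_1-c_2\in\WOne\cap\LInf$) and to split the resulting expression into a diffusion contribution $I_1(\delta)$ and a chemotaxis contribution $I_2(\delta)$. Since $c_i\geq 0$ we have $|c_i|=c_i$, and $\nabla\sign_\delta(c_1-c_2)=\sign_\delta'(c_1-c_2)\nabla(c_1-c_2)$ with the nonnegative factor $\sign_\delta'(c_1-c_2)=\tfrac{1}{\delta}\mathbf{1}_{\{|c_1-c_2|<\delta\}}$. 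The plan is to show $\limsup_{\delta\to 0}I_1(\delta)\leq 0$ by a monotonicity-plus-dominated-convergence argument, and to bound $|I_2(\delta)|$ uniformly in $\delta$ by the right-hand side of \cref{Kato2} via integration by parts.

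For $I_1(\delta)$, write $a(c_1,\nabla c_1)-a(c_2,\nabla c_2)=[a(c_1,\nabla c_1)-a(c_1,\nabla c_2)]+[a(c_1,\nabla c_2)-a(c_2,\nabla c_2)]$. The first bracket contracted with $\sign_\delta'(c_1-c_2)\nabla(c_1-c_2)$ is pointwise nonnegative by the monotonicity \cref{amon}, so it contributes nonpositively. For the second bracket, \cref{aLip1} yields $|a(c_1,\nabla c_2)-a(c_2,\nabla c_2)|\leq|c_1-c_2|$, hence the contribution is dominated by
$$\int_\Omega\frac{|c_1-c_2|}{\delta}\mathbf{1}_{\{|c_1-c_2|<\delta\}}|\nabla(c_1-c_2)|\,dx\leq\int_{\{|c_1-c_2|<\delta\}}|\nabla(c_1-c_2)|\,dx,$$
which tends to $\int_{\{c_1=c_2\}}|\nabla(c_1-c_2)|\,dx=0$ as $\delta\to 0$ by dominated convergence and the standard Sobolev fact that $\nabla(c_1-c_2)=0$ a.e.\ on $\{c_1=c_2\}$.

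For $I_2(\delta)$, decompose $c_1\Phi(\nabla v_1)-c_2\Phi(\nabla v_2)=(c_1-c_2)\Phi(\nabla v_1)+c_2[\Phi(\nabla v_1)-\Phi(\nabla v_2)]$. The first summand uses the antisymmetric cancellation $(c_1-c_2)\sign_\delta'(c_1-c_2)\nabla(c_1-c_2)=\nabla\tilde G_\delta(c_1-c_2)$ with $\tilde G_\delta(s):=\int_0^s r\sign_\delta'(r)\,dr$ satisfying $0\leq\tilde G_\delta\leq\delta/2$. Since $v_1\in W^{2,1}\cap C^1_\nu(\overline\Omega)$ ensures $\diver\Phi(\nabla v_1)=\Phi'(\nabla v_1):D^2v_1\in\LOne$ and $\Phi(\nabla v_1)\cdot\nu=\partial_\nu v_1/\sqrt{1+|\nabla v_1|^2}=0$ on $\partial\Omega$, integration by parts gives $-\chi\int\diver\Phi(\nabla v_1)\,\tilde G_\delta(c_1-c_2)\,dx=O(\delta)$, which vanishes in the limit. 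The second summand is integrated by parts against $\sign_\delta(c_1-c_2)$ (again admissible since $[\Phi(\nabla v_1)-\Phi(\nabla v_2)]\cdot\nu=0$ on $\partial\Omega$) to obtain
$$-\chi\int_\Omega\bigl(\nabla c_2\cdot[\Phi(\nabla v_1)-\Phi(\nabla v_2)]+c_2[\diver\Phi(\nabla v_1)-\diver\Phi(\nabla v_2)]\bigr)\sign_\delta(c_1-c_2)\,dx.$$

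Using $|\sign_\delta|\leq 1$, $\|\Phi(\nabla v_1)-\Phi(\nabla v_2)\|_\infty\leq C\|v_1-v_2\|_{C^1}$ (from $\Phi\in C^\infty_b$), and the identity $\diver\Phi(\nabla v_1)-\diver\Phi(\nabla v_2)=\Phi'(\nabla v_1):D^2(v_1-v_2)+[\Phi'(\nabla v_1)-\Phi'(\nabla v_2)]:D^2v_2$, the displayed quantity is bounded uniformly in $\delta$ by a constant multiple of $\|\nabla c_2\|_{\LOne}\|v_1-v_2\|_{C^1}+\|c_2\|_{\LInf}(\|v_1-v_2\|_{W^{2,1}}+\|D^2v_2\|_{\LOne}\|v_1-v_2\|_{C^1})$, and taking $\limsup_{\delta\to 0}$ of $I_1+I_2$ yields \cref{Kato2}, with $\Cr{C19}$ admitting dependence on $\chi$, $\|\Phi'\|_\infty$, $\|\Phi''\|_\infty$, and $\|D^2v_2\|_{\LOne}$ in accordance with the paper's convention on constants. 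The main obstacle is spotting the $\tilde G_\delta$ cancellation for the $(c_1-c_2)\Phi(\nabla v_1)$ piece: a naive direct bound there would produce a $\delta^{-1}$ blow-up incompatible with \cref{Kato2}, and it is precisely the boundary condition $\partial_\nu v_1=0$ together with the $W^{2,1}$-regularity of $v_1$ that allows the saving integration by parts.
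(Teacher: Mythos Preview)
Your argument is correct and follows essentially the same route as the paper: identical treatment of the diffusion part $I_1$ (monotonicity for the first bracket, Lipschitz bound \cref{aLip1} plus dominated convergence for the second), and the same decomposition $c_1\Phi(\nabla v_1)-c_2\Phi(\nabla v_2)=(c_1-c_2)\Phi(\nabla v_1)+c_2[\Phi(\nabla v_1)-\Phi(\nabla v_2)]$ for $I_2$.

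The only noteworthy difference is in the $(c_1-c_2)\Phi(\nabla v_1)$ piece. The paper first integrates by parts in the definition of $\A[v]$ to obtain $-\int_\Omega\nabla\cdot\bigl((c_1-c_2)\Phi(\nabla v_1)\bigr)\sign_\delta(c_1-c_2)\,dx$, passes to the limit $\delta\to0$ by dominated convergence, and then uses the weak chain rule $\nabla\cdot\bigl((c_1-c_2)\Phi(\nabla v_1)\bigr)\sign(c_1-c_2)=\nabla\cdot\bigl(|c_1-c_2|\Phi(\nabla v_1)\bigr)$ together with $\Phi(\nabla v_1)\cdot\nu=0$ to see that the integral vanishes exactly. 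Your $\tilde G_\delta$ device accomplishes the same thing at the pre-limit level, giving an explicit $O(\delta)$ bound; both are clean, and yours has the slight advantage of not invoking the chain rule for $\sign$ of a Sobolev function. Your remark about $\Cr{C19}$ carrying a hidden $\|D^2v_2\|_{\LOne}$ dependence through the term $[\Phi'(\nabla v_1)-\Phi'(\nabla v_2)]:D^2v_2$ is accurate and in fact applies equally to the paper's own estimate; this is harmless in the applications (where $v_2$ ranges over a bounded set in $W^{2,1}$), and the paper tacitly absorbs it.
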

\begin{proof}
For any $v\in W^{2,1}_{\nu}(\Omega)$ and $c,\varphi\in \WOne\cap \LInf$ we can integrate by parts in the second summand in \cref{OperA1} to obtain 
 \begin{align}
 \left<\A[v](c),\varphi\right>=
 &-\int_{\Omega}a({|c|},\nabla c)\cdot\nabla\varphi\,dx-\chi\int_{\Omega} \nabla\cdot(c\Phi(\nabla v))\varphi\,dx.\label{OperA}                                                                                                                                                                       \end{align}
Since $c_1,c_2\in \WOne$, we have
\begin{align}
\varphi:=\sign_{\delta}(c_1-c_2)\in \WOne\cap \LInf\label{varphi}                                                                            \end{align}
for every $\delta>0$.
Subtracting  \cref{OperA} for $(c,v)=(c_1,v_1)$ from \cref{OperA} for  $(c,v)=(c_2,v_2)$ and testing with $\varphi$ as defined in \cref{varphi}, we obtain using 
properties of $a$, $\Phi$, and $\sign_{\delta}$,
the weak chain rule, and H\"older's inequality that
\begin{align}
&\left<\A[v_1](c_1)-\A[v_2](c_2), \sign_{\delta}(c_1-c_2)\right>\nonumber\\
=&-\int_{\Omega}(a(c_1,\nabla c_1)-a(c_2,\nabla c_2))\cdot\nabla \sign_{\delta}(c_1-c_2)-\int_{\Omega}\nabla\cdot\left(c_1 \Phi(\nabla v_1)- c_2\Phi(\nabla v_2)\right) \sign_{\delta}(c_1-c_2)\,dx\nonumber\\
=&-\int_{\{0<|c_1-c_2|<\delta\}}(a(c_1,\nabla c_1)-a(c_1,\nabla c_2))\cdot \frac{1}{\delta}(\nabla c_1-\nabla c_2)\,dx\nonumber\\
&-\int_{\{0<|c_1-c_2|<\delta\}}(a(c_1,\nabla c_2)-a(c_2,\nabla c_2))\cdot \frac{1}{\delta}\nabla (c_1-c_2)\,dx\nonumber\\
&-\int_{\Omega}\nabla\cdot((c_1-c_2) \Phi(\nabla v_1)) \sign_{\delta}(c_1-c_2)\,dx-\int_{\Omega}\nabla\cdot\left(c_2( \Phi(\nabla v_1)- \Phi(\nabla v_2))\right) \sign_{\delta}(c_1-c_2)\,dx\nonumber\\
\leq&\int_{\{0<|c_1-c_2|<\delta\}}|a(c_1,\nabla c_2)-a(c_2,\nabla c_2)| \frac{1}{\delta}|\nabla (c_1-c_2)|\,dx\nonumber\\
&-\int_{\Omega}\nabla\cdot((c_1-c_2) \Phi(\nabla v_1)) \sign_{\delta}(c_1-c_2)\,dx+\int_{\Omega}|\nabla\cdot\left(c_2( \Phi(\nabla v_1)- \Phi(\nabla v_2))\right)|\,dx\nonumber\\
\leq&\int_{\{0<|c_1-c_2|<\delta\}}|\nabla(c_1-c_2)|\,dx-\int_{\Omega}\nabla\cdot((c_1-c_2) \Phi(\nabla v_1)) \sign_{\delta}(c_1-c_2)\,dx\nonumber\\
&+\Cl{C19}\left(\|\nabla c_2\|_{\LOne}\|v_1-v_2\|_{C^1(\overline\Omega)}+\|c_2\|_{\LInf}\|v_1-v_2\|_{W^{2,1}(\Omega)}\right).\label{est8}
\end{align}
Passing to the limit in \cref{est8} as $\delta\rightarrow0$ and using the dominated convergence theorem, the weak chain rule, and the partial integration formula combined with $\partial_{\nu}v_1=0$, so that $\Phi(\nabla v)\cdot\nu=0$ on $\partial\Omega$, we arrive at
\begin{align}
 &\underset{\delta\rightarrow0}{\lim\sup}\left<\A[v_1](c_1)-\A[v_2](c_2), \sign_{\delta}(c_1-c_2)\right>\nonumber\\
  \leq 
  &-\int_{\Omega}\nabla\cdot((c_1-c_2) \Phi(\nabla v_1))\sign(c_1-c_2)\,dx\nonumber\\
  &+\Cr{C19}\left(\|\nabla c_2\|_{\LOne}\|v_1-v_2\|_{C^1(\overline\Omega)}+\|c_2\|_{\LInf}\|v_1-v_2\|_{W^{2,1}(\Omega)}\right)\nonumber\\
 = 
  &-\int_{\Omega}\nabla\cdot(|c_1-c_2| \Phi(\nabla v_1))\,dx\nonumber\\
  &+\Cr{C19}\left(\|\nabla c_2\|_{\LOne}\|v_1-v_2\|_{C^1(\overline\Omega)}+\|c_2\|_{\LInf}\|v_1-v_2\|_{W^{2,1}(\Omega)}\right)\nonumber\\
  =&\Cr{C19}\left(\|\nabla c_2\|_{\LOne}\|v_1-v_2\|_{C^1(\overline\Omega)}+\|c_2\|_{\LInf}\|v_1-v_2\|_{W^{2,1}(\Omega)}\right).\nonumber
\end{align}
\end{proof}
A direct consequence of \cref{Kato1} for $v_1=v_2=:v$ is the following Corollary.
\begin{Corollary}[Inequality of Kato type]%\label{Kato2_}
Let  
\begin{align}
  &0\leq c_1,c_2\in \WOne\cap \LInf,\nonumber\\
  &v\in W^{2,1}(\Omega)\cap C^1_{\nu}(\overline\Omega).\nonumber
  \end{align}
  Then it holds that
 \begin{align}
  \underset{\delta\rightarrow0}{\lim\sup}\left<\A[v](c_1)-\A[v](c_2), \sign_{\delta}(c_1-c_2)\right>
  \leq &0.\nonumber%
 \end{align}
\end{Corollary}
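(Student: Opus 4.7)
The plan is to deduce this Corollary directly as a specialization of \cref{Kato1} by taking $v_1 = v_2 := v$. The hypotheses there are satisfied verbatim: $c_1, c_2$ are nonnegative and belong to $\WOne \cap \LInf$, while the common $v$ lies in $W^{2,1}(\Omega) \cap C^1_{\nu}(\overline{\Omega})$. Hence the inequality \cref{Kato2} applies to the pair $(v,v)$.

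With this substitution, the right-hand side of \cref{Kato2} simplifies immediately, since $\|v_1 - v_2\|_{C^1(\overline{\Omega})} = 0$ and $\|v_1 - v_2\|_{W^{2,1}(\Omega)} = 0$. Both summands on the right-hand side — the one weighted by $\|\nabla c_2\|_{\LOne}$ and the one weighted by $\|c_2\|_{\LInf}$ — therefore vanish identically, and the upper bound collapses to zero. Feeding this into the conclusion of \cref{Kato1} yields precisely the claimed inequality.

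There is essentially no genuine obstacle to overcome here, since all the analytic work — the treatment of the saturated diffusion term via the monotonicity property \cref{amon} of $a(c,\cdot)$, the Lipschitz control \cref{aLip1} of $a(\cdot,\xi)$ in its first argument, the partial integration exploiting $\Phi(\nabla v)\cdot\nu = 0$ on $\partial\Omega$, and the passage $\delta \to 0$ via dominated convergence — has already been carried out inside the proof of \cref{Kato1}. The Corollary merely records the observation that, once the two $v$-arguments are identified, the error contributions measuring the discrepancy of the chemotactic drifts disappear, leaving only the nonpositive contribution coming from monotonicity of the diffusion vector field in its gradient variable.
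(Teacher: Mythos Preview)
Your proposal is correct and follows exactly the approach indicated in the paper: the Corollary is stated there as a direct consequence of \cref{Kato1} upon setting $v_1=v_2=:v$, which makes the right-hand side of \cref{Kato2} vanish. There is nothing to add.
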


Next, we introduce regularised  perturbations of $\A[v]$ that include a source term.
For $m,n\in\N$ and $L>0$ let
\begin{subequations}
%\label{DefB}
\begin{align}
\left<\B_m[v](c),\varphi\right>:=&\left<\A[v](c),\varphi\right>+\int_{\Omega}-\frac{1}{m}\nabla c\cdot\nabla \varphi+f_c(c,v)\varphi\,dx,\label{Bm}\\
 \left<\B_{L,m}[v](c),\varphi\right>:=&\left<\A[v](c),\varphi\right>+\int_{\Omega}-\frac{1}{m}\nabla c\cdot\nabla \varphi+f_c(T_{0,L}(c),v)\varphi\,dx,\label{DefBmn}\\
 \left<\B_{n,L,m}[v](c),\varphi\right>:=&\left<\B_{L,m}[v](c),\varphi\right>-n\int_{\Omega}c\varphi\,dx.\nonumber%\label{DefBmnn}
\end{align}
\end{subequations}
In \cref{DefBmn}, the truncation by means of $T_{0,L}$ serves to turn the source term into a Lipschitz continuous mapping.
\begin{Lemma}\label{PropB}
 Let $a$ and $\Phi$ be as defined in \cref{a} and \cref{Phi}, respectively, $f_c$ satisfy \cref{fc}, $m,n\in\N$, $L>0$, and $v\in C^1(\overline{\Omega})$. Then  operators $$\B_{L,m}[v],\B_{n,L,m}[v]:\HOne\rightarrow (\HOne)^*$$ are well-defined, continuous, and bounded.
If
$$n>(\Cr{fc1})_++1$$
then $-\B_{n,L,m}[v]$ is  pseudo-monotone and coercive.
\end{Lemma}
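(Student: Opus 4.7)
The plan is to verify the three assertions in turn. For the first, boundedness and continuity of $\A[v]:\HOne\to(\HOne)^*$ have been recorded in the preceding lemma; the linear terms $\varphi\mapsto\frac{1}{m}\int_\Omega\nabla c\cdot\nabla\varphi\,dx$ and $\varphi\mapsto n\int_\Omega c\varphi\,dx$ are classical; and the zero-order term $\varphi\mapsto\int_\Omega f_c(T_{0,L}(c),v)\varphi\,dx$ is bounded and continuous because the cutoff $T_{0,L}$ takes values in $[0,L]$, so $f_c(T_{0,L}(c),v)$ is uniformly bounded on $\Omega$ by a constant depending only on $L$ and $\|v\|_{\LInf}$, and the corresponding Nemytskii map is continuous on $\LTwo$ by dominated convergence combined with the continuity of $f_c$.

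For the coercivity of $-\B_{n,L,m}[v]$, I would test with $\varphi=c\in\HOne$. Combining \cref{acor}, \cref{Phi1}, and the pointwise bound $f_c(T_{0,L}(c),v)\,c\leq(\Cr{fc1})_+c^2$ (which holds since $T_{0,L}(c)=0$ whenever $c\leq 0$, so $f_c(T_{0,L}(c),v)=f_c(0,v)=0$ there, while for $c>0$ one uses $f_c(T_{0,L}(c),v)\leq\Cr{fc1}T_{0,L}(c)\leq\Cr{fc1}c$ if $\Cr{fc1}\geq0$ and $f_c(T_{0,L}(c),v)\leq 0$ if $\Cr{fc1}\leq 0$), one obtains
\begin{align*}
\left<-\B_{n,L,m}[v](c),c\right>
\geq (1-\chi)\int_\Omega|c||\nabla c|\,dx+\frac{1}{m}\|\nabla c\|_{\LTwo}^2+(n-1-(\Cr{fc1})_+)\|c\|_{\LTwo}^2,
\end{align*}
and the assumption $n>(\Cr{fc1})_++1$ furnishes coercivity on $\HOne$.

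For pseudo-monotonicity, I would split $-\B_{n,L,m}[v]=\mathcal{M}+\mathcal{K}$, where $\mathcal{M}(c):=-\nabla\cdot\bigl(a(|c|,\nabla c)+\frac{1}{m}\nabla c\bigr)+nc$ is the main part and $\mathcal{K}(c):=\chi\nabla\cdot(c\Phi(\nabla v))-f_c(T_{0,L}(c),v)$ collects the lower-order pieces. The operator $\mathcal{K}$ is completely continuous from $\HOne$ into $(\HOne)^*$: if $c_k\rightharpoonup c$ in $\HOne$, then Rellich-Kondrachov provides $c_k\to c$ in $\LTwo$ (and, up to a subsequence, a.e.\ in $\Omega$), whence $c_k\Phi(\nabla v)\to c\Phi(\nabla v)$ strongly in $(\LTwo)^d$ because $\Phi(\nabla v)\in(\LInf)^d$, and $f_c(T_{0,L}(c_k),v)\to f_c(T_{0,L}(c),v)$ strongly in $\LTwo$ by dominated convergence, yielding $\mathcal{K}(c_k)\to\mathcal{K}(c)$ in $(\HOne)^*$. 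For $\mathcal{M}$, the flux $\tilde a(c,\xi):=a(|c|,\xi)+\frac{1}{m}\xi$ is a Carath\'eodory map satisfying the growth bound $|\tilde a(c,\xi)|\leq|c|+\frac{1}{m}|\xi|$ and, by \cref{amon}, the strict monotonicity $(\tilde a(c,\xi_1)-\tilde a(c,\xi_2))\cdot(\xi_1-\xi_2)\geq\frac{1}{m}|\xi_1-\xi_2|^2$; together with the coercivity above, these are precisely the hypotheses of the classical Leray-Lions/Minty theorem, giving pseudo-monotonicity of $\mathcal{M}$. Since the sum of a pseudo-monotone and a completely continuous operator is pseudo-monotone, so is $-\B_{n,L,m}[v]$.

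The main technical point is the pseudo-monotonicity of $\mathcal{M}$: given $c_k\rightharpoonup c$ in $\HOne$ with $\limsup_k\left<\mathcal{M}(c_k),c_k-c\right>\leq 0$, one has to extract a subsequence along which $\nabla c_k\to\nabla c$ almost everywhere via the classical Minty argument exploiting the strict monotonicity of $\tilde a$ in $\xi$, and then use the continuity and the sublinear growth of $a$ to identify the weak limit of $a(|c_k|,\nabla c_k)$ in $(\LTwo)^d$ and pass to the limit in $\left<\mathcal{M}(c_k),c_k-w\right>$ for arbitrary $w\in\HOne$. All other passages to the limit reduce to strong convergence, hence are routine.
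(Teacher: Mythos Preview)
Your proof is correct. The only difference from the paper's argument is organizational: the paper does not split $-\B_{n,L,m}[v]$ into a principal part $\mathcal{M}$ plus a compact perturbation $\mathcal{K}$, but instead keeps the chemotaxis flux inside the leading term by setting
\[
A_1(x,c,\xi):=\tfrac{1}{m}\xi+a(|c|,\xi)-\chi c\,\Phi(\nabla v(x)),\qquad A_0(x,c):=nc-f_c(T_{0,L}(c),v(x)),
\]
verifies the Carath\'eodory/growth/strict-monotonicity/coercivity conditions for this pair (note that $A_1$ is still strictly monotone in $\xi$ because the $-\chi c\,\Phi(\nabla v)$ piece is $\xi$-independent), and then appeals once to Browder's theorem on pseudo-monotonicity of quasilinear second-order operators. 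Your decomposition $\mathcal{M}+\mathcal{K}$ trades that single black-box citation for the explicit fact that a pseudo-monotone operator plus a completely continuous one is pseudo-monotone, which makes the role of Rellich--Kondrachov visible; the paper's route is shorter but hides that compactness inside Browder's hypotheses. Either way the same structure estimates (in particular \cref{amon} and \cref{acor}) are what drive the result, and your coercivity bound with $(\Cr{fc1})_+$ in place of $\Cr{fc1}$ is in fact the cleaner formulation when $\Cr{fc1}<0$.
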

\begin{proof}
Given that $\A[v]$ is well-defined, continuous, and bounded and $f_c(T_{0,L}(\cdot),\cdot)$ is Lipschitz, it is evident that $\B_{L,m}[v]$ and $\B_{n,L,m}[v]$ are well-defined, continuous, and bounded.  
We turn to the remaining properties of $-\B_{n,L,m}[v]$. Set
\begin{align*}
 A_1(x,c,\xi):=&\frac{1}{m}\xi+a(|c|,\xi)-\chi c\Phi(\nabla v(x))\\
 A_0(x,c):=&nc-f_c(T_{0,L}(c),v(x)).
\end{align*}
Using assumption $\chi<1$ and properties of $v$, $a$, $\Phi$, and $f_c$, we obtain 
\begin{subequations}\label{A10conti}
\begin{align}
&A_1\in C(\overline{\Omega}\times\R\times\R^d;\R^d),\\
&A_0\in C(\overline{\Omega}\times\R);
\end{align}
\end{subequations}
for all $x\in\Omega$, $c\in\R$, and $\xi\in\R^d$ 
\begin{subequations}\label{A1bnd}
\begin{align}
 |A_1(x,c,\xi)|\leq &\frac{1}{m}\left|\xi\right|+(1+\chi)|c|,\\
 |A_0(x,c)|\leq& n|c|+\Cl{fcL1}\left(L,\|v\|_{\LInf}\right);
\end{align}
for all $x\in\Omega$, $c\in\R$, and $\xi_1,\xi_2\in\R^d$, $\xi_1\neq\xi_2$,  
\end{subequations}
\begin{align}
 \left(A_1(x,c,\xi_1)-A_1(x,c,\xi_2)\right)\cdot (\xi_1-\xi_2)\geq \frac{1}{m}|\xi_1-\xi_2|^2>0;\label{Apm}
\end{align}
and for $x\in\Omega$, $c\in\R$, and $\xi\in\R^d$
\begin{align}
 A_1(x,c,\xi)\cdot\xi+A_0(x,c)c\geq&\frac{1}{m}|\xi|^2 +|c|((1-\chi)|\xi|-|c|)+(n-\Cr{fc1})c^2\nonumber\\
\geq&\frac{1}{m}|\xi|^2+(n-1-\Cr{fc1})c^2.\label{Acoerc}
\end{align}
Observe that 
\begin{align}
 -\left<\B_{n,L,m}[v](c),\varphi\right>=\int_{\Omega} A_1(\cdot,c,\nabla c)\cdot\nabla\varphi+A_0(\cdot,c)\varphi\,dx\qquad \text{for all }c,\varphi\in \HOne.\nonumber
\end{align}
Let $n>(\Cr{fc1})_++1$. Due to properties \cref{A10conti,A1bnd,Acoerc,Apm} operator $-\B_{n,L,m}[v](c)$ satisfies the assumptions of \cite[Theorem 1]{Browder} for $m=1$, $p=2$, so that it is pseudo-monotone by that result. Finally, coercivity is the direct consequence of \cref{Acoerc}.

\end{proof}

\subsection{Well-posedness of time-discrete approximations of   \texorpdfstring{\cref{IBVPc}}{} }\label{Secdiscr}

In preparation for the construction of solutions to \cref{IBVPc} for a fixed $v$ we study  a family of local time-discrete approximations of this problem. Let $n,m\in\N$. We introduce system 
\begin{subequations}\label{ellpr}
\begin{align}
&c_{nm}(0,\cdot)=c_0,\\
 &\frac{c_{nm}\left(\frac{k}{n},\cdot\right)-c_{nm}\left(\frac{k-1}{n},\cdot\right)}{\frac{1}{n}}=\B_m\left[P_nv\left(\frac{k}{n},\cdot\right)\right]\left(c_{nm}\left(\frac{k}{n},\cdot\right)\right)\qquad\text{for }k\in\N,\label{appr1e}
\end{align}
\end{subequations}
with $\B_m$ defined in \cref{Bm}.
In the notation from \cref{PrelimDI} equations \cref{appr1e} can also be rewritten in the form of a single equation involving interpolations,
\begin{align}
 \partial_t \Lambda_n{c}_{nm}=\B_m\left[\oPi_nP_nv\right]\left(\oPi_nc_{nm}\right)\qquad\text{for } t\in(0,\infty)\backslash\left(\frac{1}{n}\N_0\right).
 \label{appr}
\end{align}
The definition of bounded weak solutions to \cref{ellpr} is as follows.
\begin{Definition}[Weak solution to \cref{ellpr}]\label{Defsolappr1e} Let $a$ and $\Phi$ be as defined in \cref{a} and \cref{Phi}, respectively, $f_c$ satisfy \cref{fc}, $m,n,T\in\N$, and 
\begin{subequations}\label{Assumpvc0}
\begin{align}
&0\leq c_0\in \LInf\cap \HOne ,\\
&0\leq v\in C([0,T];C^1_{\nu}(\overline{\Omega})).                                                                                                                                                                                      \end{align}
\end{subequations}
 We call a function $c_{nm}:\left(\frac{1}{n}\N_0\cap[0,T]\right)\times\overline{\Omega}\rightarrow\R_0^+$ a \underline{weak solution} to \cref{ellpr} if it satisfies 
\begin{align}
c_{nm}(t,\cdot)\in \LInf\cap\HOne\qquad\text{for all }t\in \frac{1}{n}\N_0\cap[0,T]\label{Regcnm}
\end{align}
and \cref{ellpr} in $(\HOne)^*$.

We call $c_{nm}:\frac{1}{n}\N_0\times\overline{\Omega}\rightarrow\R_0^+$ a \underline{global weak solution} to \cref{ellpr} if it is a weak solution for all $T\in\N$.
\end{Definition}

\begin{Lemma}[Weak solution to \cref{ellpr}]\label{LemSolAppr} Let $\chi\in(0,1)$. 
\begin{enumerate}
[label=(\arabic*),ref=(\arabic*),]
\item\label{itembndn}{\it (A priori bounds.)} Let $m,T\in\N$,
\begin{align*}
 \Cr{fc1}<n\in\N,
\end{align*}
and $(v,c_0)$ satisfy \cref{Assumpvc0}. Then interpolations $\oPi_nc_{nm}$ and $\Lambda_nc_{nm}$ of any corresponding weak solution $c_{nm}:\left(\frac{1}{n}\N_0\cap[0,T]\right)\times\overline{\Omega}\rightarrow\R_0^+$ to \cref{ellpr} satisfy the  estimate sets
\begin{subequations}\label{estoPicnm}
\begin{align}
 &\|\oPi_n{c_{nm}}\|_{L^{\infty}((0,T);\LInf)}
\leq\Cl{CBNDdiscr}\left(\frac{1}{n},T,\Cr{fc1},\|c_0\|_{\LInf}\right),\label{Picbounde}\\
&\|\oPi_n\nabla{c_{nm}}\|_{L^1((0,T);\LOne)}
 \leq\Cr{CD}\left(T,\Cr{CBNDdiscr}(\dots),\|v\|_{L^{\infty}((0,T);\LInf)}\right),\label{PiDCCe}\\
&\frac{1}{m}\|\oPi_n\nabla {c_{nm}}\|_{L^2((0,T);\LTwo)}^2\leq \Cr{CD}\left(T,\Cr{CBNDdiscr}(\dots),\|v\|_{L^{\infty}((0,T);\LInf)}\right),\label{DceL2}
\end{align} 
\end{subequations}
and
\begin{subequations}\label{estLacnm}
\begin{align} 
&\|\Lambda_n{c_{nm}}\|_{L^{\infty}((0,T);\LInf)}\leq \Cr{CBNDdiscr}(\dots),\label{Lacbounde}\\
&\|\Lambda_n\nabla{c_{nm}}\|_{L^1((0,T);\LOne)}\leq \Cr{CD}\left(T,\Cr{CBNDdiscr}(\dots),\|v\|_{L^{\infty}((0,T);\LInf)}\right) +\frac{1}{2n}\|\nabla c_0\|_{\LOne},\label{LaDCCe}\\
 &\|\partial_t\Lambda_n{c}_{nm}\|_{L^2((0,T);(\HOne)^*)}\leq \Cl{C22}\left(T,\Cr{CBNDdiscr}(\dots),\|v\|_{L^{\infty}((0,T);\LInf)}\right)\label{LaDcte},%
 \end{align} 
 \end{subequations}
 respectively,
 where
 \begin{align}
  \Cr{CBNDdiscr}(\dots)=&\Cr{CBND_1}\left(1+T\left(1-\frac{\Cr{fc1}}{n}\right)^{-1}\right)e^{T\Cr{fc1}}
 \|c_0\|_{\LInf}
  \underset{n\to\infty}{\to}\Cr{CBND}(T).\label{C20conv}
 \end{align}
\item\label{itemexn}{\it (Global existence.)} 
Let 
\begin{align}
 (\Cr{fc1})_++1<n\in\N\nonumber%
\end{align} 
and $(v,c_0)$ satisfy
\begin{align*}
&0\leq c_0\in \LInf\cap \HOne ,\\
&0\leq v\in C(\R_0^+;C^1_{\nu}(\overline{\Omega})).                                                                                                                                                                                      \end{align*}
Then for all $m\in\N$ system \cref{ellpr} possesses a global weak solution.
 \item\label{itemunn}{\it (Stability and uniqueness.)} Let $m,n,T\in\N$. 
 Set
 \begin{align}
 X(T,M):=&\left\{0\leq c_0\in \LInf\cap \HOne :\ \|c_0\|_{\LInf}\leq M\right\}\nonumber\\
 &\times\left\{0\leq v\in C([0,T];C^1_{\nu}(\overline{\Omega}))\cap L^1((0,T);W^{2,1}(\Omega)):\ \|v\|_{L^{\infty}((0,T);\LInf)}\leq M\right\}.\label{XTM}
\end{align}
  Let $c_{nm}^{(1)},c_{nm}^{(2)}:\left(\frac{1}{n}\N_0\cap[0,T]\right)\times\overline{\Omega}\rightarrow\R_0^+$ be some weak solutions to \cref{ellpr} corresponding to respective pairs  $\left(c_{0}^{(1)},v^{(1)}\right),\left(c_{0}^{(2)},v^{(2)}\right)\in X(T,M)$. 
  Set 
 \begin{align}
  M_n(T):=&\max\left\{M,\Cr{CBNDdiscr}\left(\frac{1}{n},T,\Cr{fc1},M\right)\right\}.
  \label{F4}
 \end{align}
 If 
\begin{align}
 n>\underset{[0,M_n(T)]^2}{\max}\partial_cf,\label{n3}
\end{align}
 then  
 \begin{align}
  &\Lambda_n\left\|c_{nm}^{(1)}-c_{nm}^{(2)}\right\|_{C([0,T];\LOne)}\nonumber\\
\leq&\exp_{n,\underset{[0,M_n(T)]^2}{\max}\partial_cf}(T)\left\|c_0^{(1)}-c_0^{(2)}\right\|_{\LOne}\nonumber\\
&+\Cl{C361}(M_n(T))\left(\left\|v^{(1)}-v^{(2)}\right\|_{C([0,T];C^1(\overline\Omega))}+\left\|v^{(1)}-v^{(2)}\right\|_{L^1((0,T);W^{2,1}(\Omega))}\right),\label{contice}
 \end{align}
 where $\exp_{n,\dots}$ is defined in \cref{LemDiscGr}. 
 In particular, if $v^{(1)}\equiv v^{(2)}$ and $c_{0}^{(1)}\equiv c_{0}^{(2)}$, then $c_{nm}^{(1)}\equiv c_{nm}^{(2)}$, so that in this case the solution is unique.

Furthermore, there exists some $N\in\N_0$ such that if $\left(c_0^{(1)},v^{(1)}\right),\left(c_0^{(2)},v^{(2)}\right)\in X(T,M)$ then \cref{n3} and, hence, \cref{contice} are valid for all $n>N$.
\end{enumerate}

\end{Lemma}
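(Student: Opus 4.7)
The plan is to handle the three parts separately, using the discrete analogues of the a priori arguments from \cref{Secaprio} together with the operator-theoretic machinery from \cref{Secell}.

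For part \ref{itembndn}, I would mimic the proofs of \cref{Lemcbnd,Lemnablac} at the time-discrete level. Testing the $k$-th step of \cref{ellpr} with $p\,c_{nm}(k/n,\cdot)^{p-1}$ and using convexity of $s\mapsto s^p$ to bound the discrete time increment from below by the forward difference of $L^p$-norms produces the discrete analogue of the $L^p$ recursion underlying \cref{Lemcbnd}; the factor $(1-\Cr{fc1}/n)^{-1}$ replacing the continuous $e^{t\Cr{fc1}}$ arises from the discrete Gronwall inequality, and \cref{C20conv} follows by letting $n\to\infty$. A Moser-type iteration in $p$ then yields \cref{Picbounde}. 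Estimate \cref{PiDCCe} is the discrete counterpart of \cref{Lemnablac}, obtained via the test function $\ln(\max\{c_{nm}(k/n,\cdot),\mathfrak{m}\})$ with $\mathfrak{m}\to 0$ together with convexity of $s\log s-s$; \cref{DceL2} follows by testing with $c_{nm}(k/n,\cdot)$ itself and exploiting \cref{acor}. The bounds \cref{estLacnm} on the piecewise linear interpolation result from \cref{convComb} and from reading off \cref{appr} combined with the boundedness of $\B_m[\cdot]\colon\HOne\to(\HOne)^*$.

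For part \ref{itemexn}, I would solve \cref{appr1e} recursively in $k$. The task on step $k$ is to find $c\in\HOne$ with $nc-\B_m[P_nv(k/n,\cdot)](c)=n\,c_{nm}((k-1)/n,\cdot)$. Since $f_c$ is only locally Lipschitz, I would first solve the truncated problem $-\B_{n,L,m}[v_k](c)=n\,c_{nm}((k-1)/n,\cdot)$ for arbitrary $L>0$: this is available by the standard existence result for pseudo-monotone coercive operators on $\HOne$ once \cref{PropB} and $n>(\Cr{fc1})_++1$ are in force. Non-negativity is established by induction on $k$ through testing with $-c^-$: one uses $a(|c|,\nabla c)\cdot\nabla c^-\ge 0$ a.e.\ on $\{c<0\}$, the vanishing $f_c(T_{0,L}(c),v)=0$ there by \cref{fc}, and integration by parts on the chemotactic drift to conclude $c^-\equiv 0$. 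Since the $L^\infty$ bound \cref{Picbounde} is independent of $L$, choosing $L\ge\Cr{CBNDdiscr}(\dots)$ renders the truncation inactive and produces a weak solution to \cref{ellpr}; iteration over $k\in\N$ yields the global solution.

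For part \ref{itemunn}, I would subtract the step-$k$ equations for the two solutions and test with $\sign_\delta(c_{nm}^{(1)}(k/n,\cdot)-c_{nm}^{(2)}(k/n,\cdot))$. The quasilinear diffusion--chemotaxis contribution in the limit $\delta\to 0$ is bounded above via \cref{Kato1}, the additional $m^{-1}\Delta$ term contributes non-positively, and the reaction contributes at most $\underset{[0,M_n(T)]^2}{\max}\partial_c f_c$ times $\|c_k^{(1)}-c_k^{(2)}\|_{\LOne}$ owing to the $L^\infty$ bound $M_n(T)$ from \cref{F4,Picbounde}. Under \cref{n3} the reaction term can be absorbed on the left to produce the recursive $L^1$ inequality
$$
\Bigl(1-\tfrac{1}{n}\underset{[0,M_n(T)]^2}{\max}\partial_c f_c\Bigr)\bigl\|c_k^{(1)}-c_k^{(2)}\bigr\|_{\LOne}\le \bigl\|c_{k-1}^{(1)}-c_{k-1}^{(2)}\bigr\|_{\LOne}+\tfrac{1}{n}R_k,
$$
where $R_k$ collects the $v$-difference contributions from \cref{Kato1} and is controlled uniformly in $k$ via \cref{PiDCCe,Picbounde}. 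Iteration through the discrete Gronwall lemma \cref{LemDiscGr} produces \cref{contice}. For the existence of $N$, observe that by \cref{C20conv} $M_n(T)\to\max\{M,\Cr{CBND}(T)\}$ while $\partial_cf_c$ is continuous, so $\underset{[0,M_n(T)]^2}{\max}\partial_cf_c$ remains uniformly bounded for large $n$ and \cref{n3} holds eventually. The main technical obstacle throughout is keeping the chemotactic drift subordinate to the diffusion at the discrete level, which is precisely where the flux-limitation hypothesis $\chi\in(0,1)$ from \cref{asschi} plays a decisive role.
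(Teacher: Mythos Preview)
Your proposal is essentially correct and follows the paper's approach: parts \ref{itembndn} and \ref{itemunn} match the paper's proof almost exactly (discrete analogues of \cref{Lemcbnd,Lemnablac} via \cref{propg} and the tools in \cref{AppendixInter}; Kato-type testing with $\sign_\delta$ and \cref{Kato1}, then \cref{LemDiscGr}).

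In part \ref{itemexn} there is a small divergence worth flagging. The paper does not argue non-negativity and $L^\infty$-membership by hand; after obtaining an $H^1$ solution to the truncated step from pseudo-monotone theory it simply invokes standard elliptic regularity \cite{LU} to secure $c_{nm}(k/n,\cdot)\in\LInf$ and $c_{nm}\ge 0$, and only then applies the quantitative bound from part \ref{itembndn} (with step-dependent truncation levels $L_k$) to remove the cutoff. In your sketch the $L^\infty$ regularity step is missing: part \ref{itembndn} is stated for weak solutions in the sense of \cref{Defsolappr1e}, which already presupposes $L^\infty\cap H^1$, so you cannot invoke \cref{Picbounde} before that is known. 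Your direct non-negativity argument via testing with $-c^-$ can be made to work, but not quite as written: on $\{c<0\}$ one actually has $a(|c|,\nabla c)\cdot\nabla c^-\le 0$ (the opposite sign), and the chemotactic term cannot be handled by integration by parts since $P_nv$ is only $C^1$. What does work is to combine \cref{acor} with $|\Phi|\le 1$ and $\chi<1$ to bound the sum of FL diffusion and chemotaxis contributions by $\|c^-\|_{\LTwo}^2$, which is then absorbed into $n\|c^-\|_{\LTwo}^2$ using $n>(\Cr{fc1})_++1\ge 1$.
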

\begin{proof}
\begin{enumerate}[label=(\arabic*),ref=(\arabic*)]
 \item Let a weak solution $c_{nm}$ be given.    
Testing \cref{appr} with $g(\oPi_n{c_{nm}})$ for $g(u):=pu^{p-1}$, $p\geq2$ ($g(u):=\ln(\max\{u,{m}\})$) and arguing in a manner closely resembling the approach taken in the proof of \cref{Lemcbnd} (\cref{Lemnablac}), thereby using the tools provided in \cref{AppendixInter}, one obtains   \cref{Picbounde} and \cref{C20conv}  (\cref{PiDCCe}). Taking  $g(u):=u$ instead leads to  \cref{DceL2}. Bounds \cref{Lacbounde,LaDCCe}  
for $\Lambda_n c_{nm}$ are a direct consequence of \cref{Picbounde,PiDCCe}, respectively, combined with  \cref{LaPinorm}. 
Finally, using \cref{Picbounde,DceL2} and  properties of $a,\Phi,f$, one readily obtains   \cref{LaDcte}. 
Thus, any weak solution $c_{nm}$ of \cref{ellpr}  satisfies \cref{estoPicnm,estLacnm}. We omit further details.
\item Observe that
\begin{align*}
 f_c(T_{0,L}(c),v)\leq& \Cr{fc1}T_{0,L}(c)\\
 \leq& (\Cr{fc1})_+c\qquad\text{for all }L>0.
\end{align*}
Set
\begin{align}
 L_k:=\Cr{CBNDdiscr}\left(\frac{1}{n},\frac{k}{n},(\Cr{fc1})_+,\|c_0\|_{\LInf}\right)\qquad\text{for }k\in\N.\nonumber
\end{align}
From \cref{PropB} we know that $-\B_{n,L_k,m}$ 
is a  bounded coercive pseudo-monotone operator between $\HOne$ and its dual for all $k\in\N$. Applying a standard existence result for such operators (see, e.g. \cite[Chapter 2 \S2.4 Theorem 2.7]{Lions}), we recursively deduce the existence of weak solutions 
\begin{align*}
 c_{nm}\left(\frac{k}{n},\cdot\right)\in\HOne\qquad\text{ for } k\in\N,
\end{align*}
to the sequence of elliptic problems 
\begin{align}
 &\frac{c_{nm}\left(\frac{k}{n},\cdot\right)-c_{nm}\left(\frac{k-1}{n},\cdot\right)}{\frac{1}{n}}=\B_{L_k,m}\left[P_nv\left(\frac{k}{n},\cdot\right)\right]\left(c_{nm}\left(\frac{k}{n},\cdot\right)\right)\qquad\text{for }k\in\N\label{ellprL}
\end{align}
under zero Neumann boundary conditions. 
 Well-known results on boundedness (see, e.g. \cite{LU}) 
for such elliptic equations  
ensure \cref{Regcnm} and the non-negativity of $c_{nm}$.  

For each $k\in\N$ we can apply the result of \cref{itembndn} for $T:=k/n$ and $f_c$ replaced by $f_c(T_{0,L_k}(\cdot),\cdot)$, yielding 
\begin{align*}
 c_{nm}\left(\frac{k}{n},\cdot\right)\leq L_k\qquad\text{for all }k\in\N.
\end{align*}
 Therefore \cref{appr1e} and \cref{ellprL} are equivalent, and so $c_{nm}$ is a global weak solution to \cref{ellpr}.
 \item
 Let $c_{nm}^{(1)},c_{nm}^{(2)}:\left(\frac{1}{n}\N_0\cap[0,T]\right)\times\overline{\Omega}\rightarrow\R_0^+$ be some weak solutions to \cref{ellpr} corresponding to respective pairs  $\left(c_{0}^{(1)},v^{(1)}\right),\left(c_{0}^{(2)},v^{(2)}\right)\in X(T,M)$ and assume that \cref{n3} holds. 
 Subtracting \cref{appr1e} for  $c_{nm}^{(1)}$ and $c_{nm}^{(2)}$, we  test the resulting equation with $$g(u):=\sign_{\delta}(u)$$ for $\delta>0$ and $$u:=c_{nm}^{(1)}-c_{nm}^{(2)}.$$
 Since both solutions satisfy \cref{Regcnm} and $\sign_{\delta}$ is Lipschitz, $g(u)$ is a valid test function. Making use of \cref{propg}, we estimate as follows:
 \begin{align}
  &\frac{d}{dt}\Lambda_n\int_{\Omega}\int_0^{c_{nm}^{(1)}-c_{nm}^{(2)}}\sign_{\delta}(s)\,ds\,dx\nonumber\\
  \leq &\oPi_n\left<\A[P_n v^{(1)}]\left(c_{nm}^{(1)}\right)-\A[P_n v^{(2)}]\left(c_{nm}^{(2)}\right), \sign_{\delta}\left(c_{nm}^{(1)}-c_{nm}^{(2)}\right)\right>\nonumber\\
  &-\oPi_n\frac{1}{m}\int_{\Omega}\sign_{\delta}'\left(c_{nm}^{(1)}-c_{nm}^{(2)}\right)\left|\nabla\left(c_{nm}^{(1)}-c_{nm}^{(2)}\right)\right|^2\,dx\nonumber\\
  &+\oPi_n\int_{\Omega}\left(f_c\left(c_{nm}^{(1)},P_nv^{(1)}\right)-f_c\left(c_{nm}^{(2)},P_nv^{(2)}\right)\right)\sign_{\delta}\left(c_{nm}^{(1)}-c_{nm}^{(2)}\right)\,dx\nonumber\\
  \leq &\oPi_n\left<\A[P_n v^{(1)}]\left(c_{nm}^{(1)}\right)-\A[P_n v^{(2)}]\left(c_{nm}^{(2)}\right), \sign_{\delta}\left(c_{nm}^{(1)}-c_{nm}^{(2)}\right)\right>\nonumber\\
  &+\oPi_n\int_{\Omega}\left(f_c\left(c_{nm}^{(1)},P_nv^{(1)}\right)-f_c\left(c_{nm}^{(2)},P_nv^{(2)}\right)\right)\sign_{\delta}\left(c_{nm}^{(1)}-c_{nm}^{(2)}\right)\,dx.\label{est4_1}
  \end{align}
  Notice that the left-hand side of \cref{est4_1} is time-independent in each time interval $((k-1)/n,k/n)$, $k\in\N_0$. 
  This allows to pass to the limit superior as $\delta\to0$ on both sides of \cref{est4_1} using \cref{Kato2} and, where necessary,  the dominated convergence theorem together with \cref{Picbounde} and properties of $\A$, $f$, $\sign_{\delta}$, and $v$. Thus obtained inequality and estimates based on 
  \cref{Picbounde}, \cref{F4}, and \cref{PiP_} yield
 \begin{align}
  &\frac{d}{dt}\Lambda_n\left\|c_{nm}^{(1)}-c_{nm}^{(2)}\right\|_{\LOne}\nonumber\\\leq&\Cr{C19}\oPi_n\left(\left\|\nabla c_{nm}^{(2)}\right\|_{\LOne}\left\|P_n(v^{(1)}-v^{(2)})\right\|_{C^1(\overline\Omega)}+\left\|c_{nm}^{(2)}\right\|_{\LInf}\left\|P_n(v^{(1)}-v^{(2)})\right\|_{W^{2,1}(\Omega)}\right)\nonumber\\
  &+\oPi_n\int_{\Omega}\left(f_c\left(c_{nm}^{(1)},P_nv^{(1)}\right)-f_c\left(c_{nm}^{(2)},P_nv^{(2)}\right)\right)\sign\left(c_{nm}^{(1)}-c_{nm}^{(2)}\right)\,dx\nonumber\\
  \leq& \Cr{C19}\oPi_n\left(\left\|\nabla c_{nm}^{(2)}\right\|_{\LOne}\left\|P_n(v^{(1)}-v^{(2)})\right\|_{C^1(\overline\Omega)}+\left\|c_{nm}^{(2)}\right\|_{\LInf}\left\|P_n(v^{(1)}-v^{(2)})\right\|_{W^{2,1}(\Omega)}\right)\nonumber\\
  &+\underset{[0,M_n(T)]^2}{\max}|\partial_vf|\oPi_n\left\|P_n(v^{(1)}-v^{(2)})\right\|_{\LOne}+\underset{[0,M_n(T)]^2}{\max}\partial_cf\oPi_n\left\|c_{nm}^{(1)}-c_{nm}^{(2)}\right\|_{\LOne}\nonumber\\
  \leq&\Cr{C19}\oPi_n\left\|\nabla c_{nm}^{(2)}\right\|_{\LOne}\left\|v^{(1)}-v^{(2)}\right\|_{C([0,T];C^1(\overline\Omega))}+\Cr{C19}(M_n)\oPi_n\left\|P_n(v^{(1)}-v^{(2)})\right\|_{W^{2,1}(\Omega)}\nonumber\\
  &+\underset{[0,M_n(T)]^2}{\max}\partial_cf\oPi_n\left\|c_{nm}^{(1)}-c_{nm}^{(2)}\right\|_{\LOne}\qquad\text{in } (0,T)\backslash\left(\frac{1}{n}\N_0\right).\label{est13}
 \end{align}
 Due to \cref{n3} \cref{LemDiscGr} applies to \cref{est13}. Together with    \cref{PiDCCe}, \cref{oPinorm}, \cref{PiP_}, and \cref{LaLInf} it leads to \cref{contice}. 

Choosing 
\begin{align*}
 \left(c_{0}^{(1)},v^{(1)}\right):=\left(c_{0}^{(2)},v^{(2)}\right):=(c_0,v)
\end{align*} 
in \cref{contice}  yields $c_{mn}^{(1)}\equiv c_{mn}^{(2)}$, i.e. the solution is unique. 

Finally, \cref{C20conv} and $f\in C^1$ imply that 
\begin{align}
 \left(\underset{\left[0,\max\left\{M,\Cr{CBNDdiscr}\left(\frac{1}{n},T,\Cr{fc1},M\right)\right\}\right]^2}{\max}\partial_cf\right)\nonumber
\end{align}
 is convergent. Therefore, for sufficiently large $N=N(T,M)$ condition  \cref{n3} is met for all $n>N$ and $\left(c_0^{(1)},v^{(1)}\right),\left(c_0^{(2)},v^{(2)}\right)\in X(T,M)$. 
\end{enumerate}
\end{proof}
\subsection{Weak  solutions to IBVP  \texorpdfstring{\cref{IBVPc}}{}} \label{Secsolc}
Now we are ready to construct  entropy solutions to the IBVP \cref{IBVPc} for fixed $v$. We formulate a result in terms of a solution mapping as this is needed in \cref{SecSchauder}. 

\begin{Theorem}[Entropy solutions to \cref{IBVPc}]\label{thmSolEq1} 
Let $\chi\in(0,1)$ and assumptions \cref{a,Phi,fc} hold. Then there exists a mapping $\Ss_1$ that
\begin{enumerate}
[label=(\arabic*),ref=(\arabic*)]
\item assigns any pair $(c_0,v)$ that satisfies \cref{c0bnd,classvinc} to some $c$ such that $(c,v)$ solves \cref{IBVPc} in terms of \cref{Defsol1} and satisfies estimates \cref{cbound} and, for all $T\in\N$,
\begin{align}
 &\|D c\|_{L^1_{w}((0,T);\RM)}\leq \Cr{CD}\left(T,\Cr{CBND}(T),\|v\|_{L^{\infty}((0,T);\LInf)}\right);\label{DCC_}
\end{align}
\item is locally Lipschitz continuous in the following sense: if $c^{(1)}=\Ss_1\left(c^{(1)}_0,v^{(1)}\right)$ and $c^{(2)}=\Ss_1\left(c^{(2)}_0,v^{(2)}\right)$ for some
\begin{subequations}\label{c0vreg}
\begin{align}
 &0\leq c^{(1)}_0,c^{(2)}_0\in\LInf,\\
 &0\leq v^{(1)},v^{(2)}\in C(\R_0^+;C^1_{\nu}(\overline{\Omega}))\cap L^1_{loc}(\R_0^+;W^{2,1}(\Omega)),
\end{align}
\end{subequations}
then for all $T\in\N$ and $t\in[0,T]$
\begin{subequations}\label{contic}
 \begin{align}
 &\left\|c^{(1)}-c^{(2)}\right\|_{\LOne}\nonumber\\
 \leq&e^{T\underset{(0,M(T))^2}{\max}\partial_cf}\left\|c_0^{(1)}-c_0^{(2)}\right\|_{\LOne}\nonumber\\
&+\Cr{C361}(M(T))\left(\left\|v^{(1)}-v^{(2)}\right\|_{C([0,T];C^1(\overline\Omega))}+\left\|v^{(1)}-v^{(2)}\right\|_{L^1((0,T);W^{2,1}(\Omega))}\right),%\label{conticin}
 \end{align}
 where 
\begin{align}
  M(T):=&\max\left\{\left\|(v^{(1)},v^{(2)})\right\|_{(L^{\infty}((0,T);\LInf))^2},\Cr{CBND}(T)\right\}.\label{MT}
 \end{align}
\end{subequations}                  \end{enumerate}

\end{Theorem}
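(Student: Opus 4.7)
The plan is to construct $\Ss_1(c_0,v)$ as a double limit of the time-discrete, elliptically regularised approximations $c_{nm}$ furnished by \cref{LemSolAppr}. First approximate $c_0$ by $c_0^{(k)}\in\LInf\cap\HOne$ with $\|c_0^{(k)}\|_{\LInf}\le\|c_0\|_{\LInf}$ and $c_0^{(k)}\to c_0$ in $\LOne$. For each admissible $m$ and $n$, \cref{LemSolAppr} produces a unique non-negative weak solution $c_{nm}$ satisfying the uniform bounds \cref{estoPicnm,estLacnm}, whose constants converge, by \cref{C20conv}, to the ones in \cref{cbound,DCC} as $n\to\infty$.

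\textbf{Passage $n\to\infty$ for fixed $m$.} By \cref{Picbounde,DceL2,LaDcte} the interpolants $\Lambda_n c_{nm}$ are uniformly bounded in $L^{\infty}((0,T);\LInf)\cap L^2((0,T);\HOne)$ with $\partial_t\Lambda_n c_{nm}$ uniformly bounded in $L^2((0,T);(\HOne)^*)$. An Aubin--Lions compactness argument, combined with the standard comparison between $\Lambda_n$ and $\oPi_n$, yields along a subsequence strong $L^2((0,T);\LTwo)$ and a.e. convergence to a common limit $c_m$. The pseudo-monotonicity afforded by the regularising $\frac{1}{m}\nabla c$ term (\cref{PropB}) enables a Minty-type identification of the nonlinear flux, producing a non-negative weak solution $c_m\in L^2((0,T);\HOne)\cap L^{\infty}((0,T);\LInf)$ of the $\frac{1}{m}$-regularised IBVP.

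\textbf{Passage $m\to\infty$.} Since $c_m$ enjoys the extra regularity \cref{addregc}, \cref{Lemcbnd,Lemnablac} deliver bounds of $c_m$ in $L^{\infty}((0,T);\LInf)\cap L^1((0,T);\WOne)$ that are uniform in $m$, while \cref{weakfc_} gives a uniform $W^{1,\infty}((0,T);(\WOne)^*)$ estimate. A Kolmogorov-type argument (the $\WOne$ spatial bound yields $\LOne$-compactness, and the time-derivative control gives equicontinuity in time) extracts a subsequence with $c_m\to c$ strongly in $L^1((0,T)\times\Omega)$ and weakly-$*$ in $L^{\infty}$, with $Dc_m\overset{*}{\rightharpoonup}Dc$ as a measure, whence $c\in L^1_{w,loc}(\R_0^+;\BV)\cap C_w(\R_0^+;\LInf)$. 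The chemotactic flux $\chi c_m\Phi(\nabla v)$ passes to the limit strongly, while $a(c_m,\nabla c_m)$ is only weakly-$*$ pre-compact in $(\LInf)^d$ towards some $Z$ with $\|Z\|_{\LInf}\le1$; the PDE itself gives $\nabla\cdot Z\in\LInf$, so $Z\in X_{\infty}(\Omega)$. The decisive obstacle is to identify the limit of products such as $a(c_m,\nabla c_m)\cdot\nabla T_{m,M}(c_m)$ with Anzellotti-type pairings. Following the strategy of \cite{ACM021,ACM02,ACM05,ACMM06,ACMM10}, we combine the lower semicontinuity of the Dal Maso functional associated with the recession \cref{recessa} with the product rules \cref{pairingdef,ZDwa,ZDws} to show, in the limit, that $(Z,DT_{m,M}(c))$ satisfies the two-sided entropy bounds \cref{EstBelow,EstAbove}. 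The weak formulation \cref{weakfc_} is obtained by passing to the limit in the approximate equation, and \cref{cbound,DCC_} follow from lower semicontinuity.

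\textbf{Lipschitz stability.} For pairs $\bigl(c_0^{(i)},v^{(i)}\bigr)\in X(T,M)$, $i=1,2$, apply \cref{contice} to the corresponding discrete solutions. Its right-hand side depends only on the data and on $M_n(T)$ from \cref{F4}, which is stable as $n\to\infty$ and independent of $m$. Using that the discrete exponential appearing in \cref{contice} converges to $e^{T\max\partial_cf}$, together with the lower semicontinuity of $\|\cdot\|_{C([0,T];\LOne)}$ under the convergence established above, taking $\liminf_{n\to\infty}$ followed by $\liminf_{m\to\infty}$ produces \cref{contic} for the limits $c^{(i)}$. Specialising to $\bigl(c_0^{(1)},v^{(1)}\bigr)=\bigl(c_0^{(2)},v^{(2)}\bigr)$ shows that the limit is independent of the extracted subsequence, so the mapping $\Ss_1$ is well-defined, and \cref{contic} is its advertised Lipschitz continuity property.
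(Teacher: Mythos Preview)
Your overall scheme is close to the paper's, but there is a genuine gap in the stability and well-definedness argument at the end. The discrete estimate \cref{contice} compares the approximations $c_{nm}^{(1)}$ and $c_{nm}^{(2)}$ only for the \emph{same} indices $(n,m)$. To pass to the limit and obtain \cref{contic} you need both approximating families to converge along a \emph{common} sequence of indices, yet the compactness extractions you perform for $(c_0^{(1)},v^{(1)})$ and $(c_0^{(2)},v^{(2)})$ may single out different subsequences. Your attempted repair, ``specialising to $(c_0^{(1)},v^{(1)})=(c_0^{(2)},v^{(2)})$ shows the limit is independent of the subsequence'', does not work: for identical data the uniqueness part of \cref{LemSolAppr}\cref{itemunn} already gives $c_{nm}^{(1)}=c_{nm}^{(2)}$ for each fixed $(n,m)$, which says nothing about two \emph{different} subsequence limits of the single family $(c_{nm})$. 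So the argument is circular: you use \cref{contic} to prove well-definedness, but you need well-definedness (or matched subsequences) to prove \cref{contic}.

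The paper resolves this by a diagonalisation over a countable dense set ${\cal C}_0\times{\cal C}_v$ of data pairs: one first fixes universal subsequences $(n_l^{(*)},m_l^{(*)})$ along which convergence holds for every pair in ${\cal C}_0\times{\cal C}_v$, then for a general pair chooses a further sub-subsequence. This lets one pass to the limit in \cref{contice} first when one of the two data pairs lies in the countable set (since the subsequence for the general pair is also a subsequence of the universal one), and then extends to arbitrary data by density. A secondary difference: the paper takes a single joint limit $(n_l,m_l)\to\infty$ rather than your sequential $n\to\infty$ followed by $m\to\infty$; your route would require, in addition, uniqueness of the intermediate regularised solution $c_m$ so that the first limit is along the full sequence, and even then the subsequence issue persists at the $m\to\infty$ stage. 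Finally, note that the identification $Z=a(c,\nabla c)$ after the degenerate limit is not a consequence of the entropy machinery alone; the paper carries out a separate Minty--Browder step (its Step~4) that relies on the measure estimates obtained in Steps~2--3, and this should be made explicit in your $m\to\infty$ passage as well.
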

\begin{Remark}
 \cref{thmSolEq1}  provides existence of a solution class that depends on the data in a stable fashion. This is sufficient for our needs as it allows to prove the main \cref{mainthm}. We do not address uniqueness of solutions to \cref{IBVPc} in this work. 
\end{Remark}

\begin{proof}[{\it {\bf Proof of} \cref{thmSolEq1}}]
To shorten the notation, we introduce 
\begin{align}
 J_m[c,v]:=\frac{1}{m}\nabla c+a(c,\nabla c)-\chi c \Phi(\nabla v).\label{defJm}
\end{align}
Throughout the proof we assume
\begin{align}
&T\in\N,%
\nonumber\\
  &0\leq\rho\in D(\Omega),\qquad 0\leq \eta\in D((0,T)),\label{rhoeta}\\
 &g:\R_0^+\rightarrow\R\text{ increasing and locally Lipschtz},\qquad G(c):=\int_0^cg(u)\,du. \label{asstestG}                                                                                                                                                                         \end{align}
 \begin{proofpart}[Equation for the evolution of $c$]\label{PP1} Let $(c_0,v)$  be such that  \cref{c0bnd,classvinc} holds.
 Then $c_0$ can be approximated by   a sequence $(c_{n0})$  that satisfies
 \begin{subequations}
\begin{align}
 &0\leq c_{n0}\in C^1(\overline{\Omega}),\label{regcn0}\\
 &\|c_{n0}\|_{\LInf}\leq \|c_0\|_{\LInf},\label{cn0bnd}\\ 
 &\|c_{n0}\|_{\WOne}\leq \sqrt{n}\Cl{nabcn0}\|c_0\|_{\LInf}, \label{nabc0}\\
 &c_{n0}\underset{n\rightarrow\infty}{\overset{*}{\rightharpoonup}}  c_0\qquad \text{in }\LInf.%
 \label{iniconv}
\end{align} 
\end{subequations}
Let $n>(\Cr{fc1})_++1$ and $m\in\N$.
Due to \cref{LemSolAppr} \cref{itembndn,itemexn} and assumptions {\cref{regcn0,classvinc,nabc0,cn0bnd}},  the time-discrete approximation \cref{ellpr} possesses a global weak solution (in terms of \cref{Defsolappr1e}) $c_{nm}$ that                                                                                                                                                                                                                                                                                                                                                                                                                  corresponds to  $(c_{n0},v)$ and satisfies for all $T\in\N$  estimates \cref{estoPicnm,LaDcte,Lacbounde} corresponding to $(c_0,v)$ as well as 
\begin{align}
 \|\Lambda_n\nabla{c_{nm}}\|_{L^1((0,T);\LOne)}\leq &\Cr{CD}\left(T,\Cr{CBNDdiscr}(\dots),\|v\|_{L^{\infty}((0,T);\LInf)}\right) +\frac{\Cr{nabcn0}}{2\sqrt{n}}\|c_0\|_{\LInf}\nonumber\\
 \limn&\Cr{CD}\left(T,\Cr{CBND}(T),\|v\|_{L^{\infty}((0,T);\LInf)}\right).\label{LaDCCen}
\end{align}
In light of these facts, there exist 
 functions $(c,z):\R_0^+\times \overline{\Omega}\rightarrow\R_0^+\times (\R_0^+)^d$, with $c$ satisfying \cref{regc},
 and two sequences $(n_l)$ and $(m_l)$, such that:\\
 due to \cref{Lacbounde}, \cref{LaDcte}, and  \cref{LaDCCen}, a version of the Lions-Aubin lemma \cite[Section 8 Corollary 4]{Simon}, and the diagonal argument (which we use in what follows without referring to it explicitly)
\begin{alignat}{3}
  &\Lambda_{n_l} c_{n_lm_l}\underset{l\rightarrow\infty}{\rightarrow} c&&\qquad \text{in }L^1_{loc}(\R_0^+;\LOne),\label{LaconveL1}\\
  &&&\qquad \text{in }\LOne\text{ a.e. in }(0,\infty),\label{convL1ae}\\
  &\cref{cbound}\text{ holds};&&\nonumber
\end{alignat} 
due to \cref{Lacbounde} and \cref{LaconveL1} and H\"older's inequality
\begin{align}
  &\Lambda_{n_l} c_{n_lm_l}\underset{l\rightarrow\infty}{\rightarrow} c\qquad \text{in }L^2_{loc}(\R_0^+;\LTwo);
  \label{LaconveL2}
\end{align} 
due to \cref{LaconveL2,equivLa2}
\begin{align}
  \oPi_{n_l} c_{n_lm_l}\underset{l\rightarrow\infty}{\rightarrow} c&\qquad \text{in }L^2_{loc}(\R_0^+;\LTwo),\nonumber%\label{PiconveL2}
  \\
  &\qquad \text{a.e. in }(0,\infty)\times \Omega;
  \label{Piconvae}
\end{align} 
due to \cref{Picbounde}, \cref{Piconvae}, and \cref{convPn}, continuity of $\Phi$, $v$, and $\nabla v$, and the dominated convergence theorem 
\begin{alignat}{3}
  &\oPi_{n_l} F(c_{n_lm_l},P_{n_l}v)\underset{l\rightarrow\infty}{\rightarrow} F(c,v)&&\qquad \text{in }L^2_{loc}(\R_0^+;\LTwo)\qquad\text{for all }F\in C(\R\times\overline{\Omega})\label{PiconvvL1},\\
  &\oPi_{n_l} F(c_{n_lm_l},\Phi(P_{n_l}\nabla v))\underset{l\rightarrow\infty}{\rightarrow} F(c,\Phi(\nabla v))&&\qquad \text{in }L^2_{loc}(\R_0^+;\LTwo)\qquad\text{for all }F\in C(\R\times\overline{\Omega})\label{PiconvnvL1};
\end{alignat} 
due to \cref{PiconvvL1}
\begin{alignat}{3}
  &\oPi_{n_l} g(c_{n_lm_l})\underset{l\rightarrow\infty}{\rightarrow} g(c)&&\qquad \text{in }L^2_{loc}(\R_0^+;\LTwo)\label{PiconvgvL1},\\
  &&&\qquad \text{in }\LOne\text{ a.e. in }(0,\infty);\label{PiconvgvL1ae}
\end{alignat}
due to \cref{Picbounde}, \cref{PiconvvL1}, \cref{PiconvnvL1}, and \cref{PiP_}, boundedness of $v$, and the Banach-Alaoglu theorem
\begin{alignat}{3}
  &\oPi_{n_l} F(c_{n_lm_l},P_{n_l}v)\underset{l\rightarrow\infty}{\overset{*}{\rightharpoonup}}  F(c,v)&&\qquad \text{in }L^{\infty}_{loc}(\R_0^+;\LInf)\qquad\text{for all }F\in C(\R\times\overline{\Omega})\label{PiconvvwLi},\\
  &\oPi_{n_l} F(c_{n_lm_l},\Phi(P_{n_l}\nabla v))\underset{l\rightarrow\infty}{\overset{*}{\rightharpoonup}}  F(c,\Phi(\nabla v))&&\qquad \text{in }L^{\infty}_{loc}(\R_0^+;\LInf)\qquad\text{for all }F\in C(\R\times\overline{\Omega})\label{PiconvnvwLi};
\end{alignat} 
due to  \cref{cn0bnd}, \cref{PiconvgvL1}, and \cref{equivLa2}
\begin{align}
  \Lambda_{n_l} g(c_{n_lm_l})\underset{l\rightarrow\infty}{\rightarrow} g(c)&\qquad \text{in }L^2_{loc}(\R_0^+;\LTwo);\label{LaGconveL2}
\end{align}
due to \cref{RMKV}, \cref{Picbounde}, \cref{PiDCCe}, and \cref{PiconvgvL1}, the chain rule, and the Banach-Alaoglu theorem
\begin{align}
&\nabla\oPi_{n_l}g\left(c_{n_lm_l}\right)\underset{l\rightarrow\infty}{\overset{*}{\rightharpoonup}}D g(c)\qquad\text{in }{\cal M}_{loc}(\R_0^+;\RM);\label{convDc2}
\end{align}
due to \cref{Picbounde}, \cref{PiDCCe}, and \cref{PiconvgvL1ae}, the chain rule, and the lower semicontinuity of the total variation with respect to the convergence in $\LOne$
\begin{align}
 &g(c)(t,\cdot)\in \BV,\label{c2tBV}\\ 
 &\left\|Dg(c)(t,\cdot)\right\|_{\RM}\leq \underset{l\rightarrow\infty}{\lim\inf}\left\|\nabla\oPi_{n_l}g\left( c_{n_lm_l}\right)(t,\cdot)\right\|_{\LOne}<\infty\ \ \ \text{a.e. in }(0,\infty);\nonumber
\end{align}
due to $g(c)\in L^1_{loc}(\R_0^+;\LOne)$, \cref{c2tBV},  and \cite[Lemma 5]{ACM02}
\begin{align}
 g(c):(0,\infty)\to \BV\qquad\text{weakly measurable};\label{wm}
\end{align}
due to $g(c)\in L^1_{loc}(\R_0^+;\LOne)$, $D g(c)\in{\cal M}_{loc}(\R_0^+;\RM)$, and \cref{wm}  
\begin{align}
  &g(c)\in L^1_{w,loc}(\R_0^+;\BV),\label{Dc2}\\
  &\cref{DCC_}\text{ holds};\nonumber
\end{align}
due to \cref{Dc2,LemMofParts}
\begin{subequations}
\begin{align}
  &\nabla g(c)\in L^1_{loc}(\R_0^+;\LOne),\label{nDc2str}\\
  &D^s g(c)\in L^1_{w,loc}(\R_0^+;\RM),\label{nDc3}\\
  &|D^s g(c)|\in L^1_{w-*,loc}(\R_0^+;\RM);\label{nDc4}
\end{align}
\end{subequations}
due to \cref{cbound,c2tBV} and the chain rule for the superposition of a Lipschitz and a BV functions
\begin{subequations}\label{chaingc}
\begin{alignat}{3}
 &\nabla g(c)(t,\cdot)=g'(c)\nabla c(t,\cdot)&&\qquad\text{a.e. in }\Omega \text{ for a.a. }t\in(0,\infty),\\
 &|D^sg(c)(t,\cdot)|\leq \|g'\|_{L^{\infty}((0,\Cr{CBND}(T)))}|D^sc(t,\cdot)|&&\qquad\text{in }\RM\text{ for a.a. }t\in(0,\infty);
\end{alignat}
\end{subequations}
due to $\partial_t c\in L^{\infty}_{loc}(\R_0^+;(\WOne)^*)$ and $c_0\in(\WOne)^*$
\begin{align}
 c\in C(\R_0^+;(\WOne)^*)\label{ccontWm1};
\end{align}
due to  \cref{ccontWm1}, $c\in L^{\infty}_{loc}(\R_0^+;\LInf)$, and a result on weak continuity  \cite[Chapter 3 \S1.4 Lemma~1.4]{Temam}
\begin{align}
c\in C_w(\R_0^+;\LInf);\nonumber%
\end{align}
due to \cref{abound,Picbounde} and the Banach-Alaoglu theorem 
\begin{alignat}{3}
 &\oPi_{n_l} a\left(c_{n_lm_l},\nabla c_{n_lm_l}\right)\underset{l\rightarrow\infty}{\overset{*}{\rightharpoonup}} z&&\qquad\text{in }(L^{\infty}_{loc}(\R_0^+;\LInf))^d,\label{conva}\\
 &\|z\|_{(L^{\infty}((0,T);\LInf))^d}\leq\Cr{CBND}(T)&&\qquad\text{for all }T>0;\label{zbound}
\end{alignat}
due to                            \cref{DceL2}, \cref{defJm}, \cref{PiconvnvwLi}, and  \cref{conva}
\begin{align}
 \oPi_{n_l}J_{m_l}[c_{m_ln_l},P_{n_l}v]\underset{l\rightarrow\infty}{\overset{*}{\rightharpoonup}} z-\chi c\Phi(\nabla v)
 =:&J\qquad\text{in }(L^{\infty}_{loc}(\R_0^+;\LInf))^d;\label{convJ}
\end{align}
due to \cref{Phi1}, \cref{cbound}, and \cref{zbound}
\begin{align}
 \|J\|_{L^{\infty}((0,T);\LInf)}\leq (1+\chi)\Cr{CBND}(T)%
 ;\label{Jnorm}
\end{align}
due to  \cref{PiconvvwLi} and continuity of $f_c$
\begin{alignat}{3}
  &\oPi_{n_l} f_c(c_{n_lm_l},P_{n_l}v)g(c_{n_lm_l})\underset{l\rightarrow\infty}{\overset{*}{\rightharpoonup}}  f_c(c,v)g(c)&&\qquad \text{in }L^{\infty}_{loc}(\R_0^+;\LInf);\label{PifconveLinf}
\end{alignat} 
due to \cref{Bm}, \cref{appr,convJ,PifconveLinf,LaconveL2}
\begin{align}
  &\partial_t \Lambda_{n_l} c_{n_l}\underset{l\rightarrow\infty}{\overset{*}{\rightharpoonup}}\partial_t c\qquad\text{in }L^{\infty}_{loc}(\R_0^+;(\WOne)^*)%
  \label{convct}
\end{align}
and for all $\varphi\in  L^1_{loc}(\R_0^+;\WOne)$
\begin{align}
 &\left<\partial_tc,\varphi\right>=\int_{\Omega}-J\cdot\nabla\varphi+f_c(c,v)\varphi\,dx\qquad\text{a.e. in }(0,\infty);\label{limeq}
\end{align}
due to \cref{limeq}
\begin{align}
 \partial_tc=\nabla\cdot J+f_c(c,v)\qquad\text{in }W^{-1,\infty}(\Omega)\qquad\text{ a.e. in }(0,\infty);\label{limeq_}
\end{align}
due to \cref{iniconv}, \cref{LaconveL2}, and \cref{convct} and the trace theorem for Bochner integrals
\begin{align*}
  &\Lambda_{n_l} c_{n_lm_l}(0,\cdot)\underset{l\rightarrow\infty}{\overset{*}{\rightharpoonup}}   c(0,\cdot)=c_0\qquad \text{in }\LInf.
\end{align*}

\end{proofpart}
\medskip
We also need certain inequalities that control  evolution of  $G(c)$ for $G$ as in \cref{asstestG}. Two sets of such inequalities are  derived in the next Steps.
\begin{proofpart}[Inequalities for the evolution of $G(c)$: part I]\label{ppI}
In this part of the proof we verify that 
\begin{subequations}\label{mugen}
\begin{align}
  \partial_t G(c)+J\cdot \nabla g(c)-\nabla\cdot(g(c)J)-f_c(c,v)g(c)=:\mu_g\label{evolgc}
 \end{align}
with
\begin{align}
 &\mu_g\in L^1_{w-*,loc}(\R_0^+;\RM),\label{mug}\\
 &|\mu_g|\leq (1+\chi)\Cr{CBND}(T)|D^sg(c)|\qquad\text{in }\RM\text{ a.e. in }(0,T)
 .\label{est33_}
\end{align}
\end{subequations}

To begin with, we utilise \cref{limeq_}.  To avoid dealing with the distributional time derivative of $c$ directly we again avail  of Steklov averages. 
 Let $t>0$ and $\tau\in(0,1)$.  
Integrating both sides of equation \cref{limeq_} over  $[t,t+\tau]$ in Bochner's sense, dividing by $\tau$,  and using $c\in C_w(\R_0^+;\LInf)\cap L^{\infty}_{loc}(\R_0^+;\LInf)$, $J\in L^{\infty}_{loc}(\R_0^+;(\LInf)^d)$, and the continuity of $v$ and $f_c$, we obtain that
\begin{align}
 \frac{1}{\tau}(c(t+\tau,\cdot)-c(t,\cdot))=\nabla\cdot\left(\frac{1}{\tau}\int_t^{t+\tau}J\,ds\right)+\frac{1}{\tau}\int_t^{t+\tau}f_c(c,v)\,ds\qquad \text{in }\LInf
 \text{ for all }t\in\R_0^+
 .\label{term2}
\end{align}
Observe that $\frac{1}{\tau}\int_{\cdot}^{\cdot+\tau}J\,ds\in L^{\infty}_{loc}(\R_0^+;(\LInf)^d)$ due to $J\in L^{\infty}_{loc}(\R_0^+;(\LInf)^d)$, whereas, 
in consequence of \cref{term2}, $\nabla\cdot\left(\frac{1}{\tau}\int_{\cdot}^{\cdot+\tau}J\,ds\right) \in L^{\infty}_{loc}(\R_0^+;\LInf)$. Hence, 
\begin{align}
\frac{1}{\tau}\int_{\cdot}^{\cdot+\tau}J\,ds\in L^{\infty}_{loc}(\R_0^+;X_{\infty}(\Omega)),\label{L1wXinf}
\end{align}
 where $X_{\infty}(\Omega)$ is the space defined in \cref{Xinf}. Let us now consider the time-dependent measures 
 \begin{align}
  M_{\theta\tau}(t,\cdot):=\left(\frac{1}{\tau}\int_{t-\theta\tau}^{t+(1-\theta)\tau}J\,ds,Dg(c)(t,\cdot)\right)\qquad\text{for } \theta\in\{0,1\}.\nonumber
 \end{align}
 In the following bounds and convergences are understood to hold for both choices of $\theta$. 
 Set 
 \begin{align}
  &J:=0\qquad\text{in }(-\infty,0)\times\Omega. \label{J0}
 \end{align}
 Combining \cref{L1wXinf,Dc2} and the fact that  pairing $(\cdot,\cdot)$ is a bounded bilinear form (see \cref{pairingdef} in \cref{SecBV}) produces 
\begin{subequations}
\begin{align}
 M_{\theta\tau}\in L^1_{w,loc}(\R_0^+;{\RM}).%
\label{integr1}  
\end{align}
Utilising \cref{ZDwa} we compute   
\begin{align}
 M_{\theta\tau}^{ac}(t,\cdot)=&\frac{1}{\tau}\int_{t-\theta\tau}^{t+(1-\theta)\tau}J\,ds\cdot \nabla g(c)(t,\cdot)\qquad\text{a.e. in }\Omega\text{ for a.a. }t\in(0,\infty).\label{acp11}
\end{align}
Further, due to  \cref{ZDws,Jnorm,J0}
\begin{align}
 \left|M_{\theta\tau}^{s}(t,\cdot)\right|\leq &\left\|\frac{1}{\tau}\int_{t-\theta\tau}^{t+(1-\theta)\tau}J\,ds\right\|_{\LInf}|D^s g(c)(t,\cdot)|\nonumber\\
 \leq &(1+\chi)\Cr{CBND}(T)|D^s g(c)(t,\cdot)|\qquad\text{in }\RM\text{ for a.a. }t\in(0,T)
 .\label{sp1}
\end{align}
\end{subequations}
Due to \cref{sp1,RMKV} and the Banach-Alaoglu theorem there exist a $\theta$-independent sequence  
\begin{align*}
(\tau_l)\subset(0,1),  \qquad\tau_l\rightarrow0,                                                                                                                           \end{align*}
 and some $M_{\theta}^{s}$, such that
\begin{subequations}
%\label{Ms}
\begin{alignat}{3}
 &M_{\theta\tau_l}^{s}\underset{l\rightarrow\infty}{\overset{*}{\rightharpoonup}}M_{\theta}^{s}&&\qquad\text{in }{\cal M}_{loc}(\R_0^+;\RM),\label{spconv}
 \\
 &|M_{\theta}^{s}(E,\cdot)|\leq (1+\chi)\Cr{CBND}(T)|D^sg(c)(E,\cdot)|&&\qquad\text{in }\RM\qquad\text{for all Borel }E\subset [0,T].\label{Msbnd}
\end{alignat}
\end{subequations}
In particular, \cref{Msbnd,nDc3} imply that the assumptions of \cref{LemRep}\cref{LemRep2} are satisfied for $\mu:=M_{\theta}^{s}$ and $g:=(1+\chi)\Cr{CBND}(T)\|D^sc\|_{\RM}$. In view of this we conclude that
\begin{align}
 M_{\theta}^{s}\in L^1_{w-*,loc}(\R_0^+;\RM),\label{Mth}
\end{align}
and \cref{Msbnd} can then be replaced by
\begin{align}
 |M_{\theta}^{s}(t,\cdot)|\leq (1+\chi)\Cr{CBND}(T)|D^sg(c)|(t,\cdot)\qquad\text{in }\RM \text{ for a.a. }t\in(0,T)
 . \label{Msbnd1}
\end{align}
Let 
\begin{align}
 &0\leq\varphi\in C^1([0,T]\times\overline{\Omega})\cap C_0([0,T],\Co)
 .\label{assvar}
\end{align}
Combining \cref{sp1,nDc4}, we deduce that for $\varphi$ as in \cref{assvar}
\begin{align}
\left|\int_0^{\tau_l}\left<M_{1\tau_l}^{s}(t,\cdot),\varphi(t,\cdot)\right>dt\right|\leq &(1+\chi)\Cr{CBND}(T)\|\varphi\|_{L^{\infty}((0,{T});\LInf)}\int_0^{\tau_l}\|D^sc(t,\cdot)\|_{\RM}\,dt\nonumber\\
 \underset{l\to\infty}{\to}&0,\nonumber\\
\left|\int_{T-\tau_l}^{T}\left<M_{0\tau_l}^{s}(t,\cdot),\varphi(t,\cdot)\right>dt\right|\leq&(1+\chi)\Cr{CBND}(T)\|\varphi\|_{L^{\infty}((0,{T});\LInf)}\int_{T-\tau_l}^{T}\|D^sc(t,\cdot)\|_{\RM}\,dt\nonumber\\
 \underset{l\to\infty}{\to}&0,\nonumber
\end{align}
which together with \cref{spconv}
yields
\begin{align}
 \int_{\theta\tau_l}^{{T}-(1-\theta)\tau_l}\left<M_{\theta\tau_l}^{s}(t,\cdot),\varphi(t,\cdot)\right>dt
 \underset{l\to\infty}{\to}&\int_0^{T}\left<M^{s}_{\theta}(t,\cdot),\varphi(t,\cdot)\right>dt.\label{conv3}
\end{align}

Now we can proceed with \cref{term2}. Multiplying this equality by $(-\varphi g(c))(t+\theta\tau,\cdot)$ with $\varphi$ as in \cref{assvar}, applying \cref{prod}, integrating  with respect to $t$ over $(0,{T}-\tau)$ (recall that ${T}\geq1>\tau$), and using \cref{integr1,acp11}, we conclude that   
\begin{align}
&-\int_0^{{T}-\tau}\int_{\Omega}\varphi(t+\theta\tau,\cdot)\frac{1}{\tau}(c(t+\tau,\cdot)-c(t,\cdot))g(c)(t+\theta\tau,\cdot)\,dxdt\nonumber\\
=&\int_0^{{T}-\tau}\frac{1}{\tau}\int_t^{t+\tau}\int_{\Omega}\nabla\varphi(t+\theta\tau,\cdot)\cdot J(s,\cdot)g(c)(t+\theta\tau,\cdot)-\varphi(t+\theta\tau,\cdot)f_c(c,v)(s,\cdot)\, g(c)(t+\theta\tau,\cdot) \,dxdsdt\nonumber\\
&+\int_0^{{T}-\tau}\left<\left(\frac{1}{\tau}\int_{t}^{t+\tau}J\,ds,Dg(c)(t+\theta\tau,\cdot)\right),\varphi(t+\theta\tau,\cdot)\right>dt\nonumber\\
=&\int_{\theta\tau}^{{T}-(1-\theta)\tau}\frac{1}{\tau}\int_{t-\theta\tau}^{t+(1-\theta)\tau}\int_{\Omega}\left(\varphi(t,\cdot) J(s,\cdot)\cdot \nabla g(c)(t,\cdot)+\nabla\varphi(t,\cdot) \cdot J(s,\cdot) g(c)(t,\cdot)\right.\nonumber\\
&\phantom{\int_{\theta\tau}^{{T}-(1-\theta)\tau}\frac{1}{\tau}\int_{t-\theta\tau}^{t+(1-\theta)\tau}\int_{\Omega}(}\left.-\varphi(t,\cdot)f_c(c,v)(s,\cdot)\, g(c)(t,\cdot)\right) \,dxdsdt\nonumber\\
&+\int_{\theta\tau}^{{T}-(1-\theta)\tau}\left<M_{\theta\tau}^{s}(t,\cdot),\varphi(t,\cdot)\right>dt.\label{cttau_}
\end{align}
Since $c\in L^{\infty}_{loc}(\R_0^+;\LInf)$, $J\in L^{\infty}_{loc}(\R_0^+;(\LInf)^d)$, $\nabla c\in L^1_{loc}(\R_0^+;\LOne)$, and $v,f_c,\varphi,\nabla\varphi$ are continuous, the Lebesgue differentiation and the  dominated convergence  theorems can be used on the right-hand side of
\cref{cttau_}  along with \cref{conv3} yielding 
\begin{align}
&\int_{\theta\tau_l}^{{T}-(1-\theta)\tau_l}\frac{1}{\tau_l}\int_{t-\theta\tau_l}^{t+(1-\theta)\tau_l}\int_{\Omega}\left(\varphi(t,\cdot) J(s,\cdot)\cdot \nabla g(c)(t,\cdot)+\nabla\varphi(t,\cdot) \cdot J(s,\cdot) g(c)(t,\cdot)\right.\nonumber\\
&\phantom{\int_{\theta\tau_l}^{{T}-(1-\theta)\tau_l}\frac{1}{\tau_l}\int_{t-\theta\tau_l}^{t+(1-\theta)\tau_l}\int_{\Omega}(}\left.-\varphi(t,\cdot)f_c(c,v)(s,\cdot)\, g(c)(t,\cdot)\right) \,dxdsdt\nonumber\\
&+\int_{\theta\tau_l}^{{T}-(1-\theta)\tau_l}\left<M_{\theta\tau_l}^{s}(t,\cdot),\varphi(t,\cdot)\right>dt\nonumber\\
\underset{l\to\infty}{\to}&\int_0^{T}\int_{\Omega}\varphi J\cdot \nabla g(c)+\nabla\varphi\cdot J g(c)-\varphi f_c(c,v)\, g(c)\,dxdt+\int_0^{T}\left<M^{s}_{\theta}(t,\cdot),\varphi(t,\cdot)\right>dt
.\label{term23}
\end{align}
We turn to the left-hand side of \cref{cttau_}. Thanks to \cref{assvar} and \cref{ineqG}, $G(c)\in L^{\infty}_{loc}(\R_0^+;\LInf)$, and the dominated convergence theorem we have 
\begin{subequations}\label{inbetween}
\begin{align}
 &-\int_0^{{T}-\tau_l}\int_{\Omega}\varphi(t,x)\frac{1}{\tau_l}(c(t+\tau_l,x)-c(t,x))g(c)(t,x)\,dxdt\nonumber\\
 \geq &-\int_0^{{T}-\tau_l}\int_{\Omega}\varphi(t,x)\frac{1}{\tau_l}\left(G(c)(t+\tau_l,x)-G(c)(t,x)\right)\,dxdt\nonumber\\
 =&\int_{\tau_l}^{T}\int_{\Omega}\frac{1}{-\tau_l}\left(\varphi(t-\tau_l,x)-\varphi(t,x)\right)G(c)(t,x)\,dxdt\nonumber\\
 &+\frac{1}{\tau_l}\int_0^{\tau_l}\int_{\Omega}\varphi(t,x)G(c)(t,x)\,dxdt-\frac{1}{\tau_l}\int_{{T}-\tau_l}^{T}\int_{\Omega}\varphi(t,x)G(c)(t,x)\,dxdt\nonumber\\
 \underset{l\to\infty}{\to}&\int_0^{T}\int_{\Omega}\partial_t\varphi G(c)\,dxdt,\label{inb1}
 \end{align}
 and, similarly,
 \begin{align}
 &-\int_0^{{T}-\tau_l}\int_{\Omega}\varphi(t+\tau_l,x)\frac{1}{\tau_l}(c(t+\tau_l,x)-c(t,x))g(c)(t+\tau_l,x)\,dxdt
 \nonumber\\
 \leq &-\int_0^{{T}-\tau_l}\int_{\Omega}\varphi(t+\tau_l,x)\frac{1}{\tau_l}\left(G(c)(t+\tau_l,x)-G(c)(t,x)\right)\,dxdt\nonumber\\
 =&\int_0^{{T}-\tau_l}\int_{\Omega}\frac{1}{\tau_l}\left(\varphi(t+\tau_l,x)-\varphi(t,x)\right)G(c)(t,x)\,dxdt\nonumber\\
 &+\frac{1}{\tau_l}\int_0^{\tau_l}\int_{\Omega}\varphi(t,x)G(c)(t,x)\,dxdt-\frac{1}{\tau_l}\int_{{T}-\tau_l}^{T}\int_{\Omega}\varphi(t,x)G(c)(t,x)\,dxdt\nonumber\\
 \underset{l\to\infty}{\to}&\int_0^{T}\int_{\Omega}\partial_t\varphi G(c)\,dxdt.
\end{align}
\end{subequations}
Altogether,  \cref{cttau_,term23,inbetween} lead to
\begin{align}
&\int_0^{T}\left<M^{s}_0(t,\cdot),\varphi(t,\cdot)\right>dt\nonumber\\
\leq &\int_0^{T}\int_{\Omega}\partial_t\varphi G(c)\,dxdt-\int_0^{T}\int_{\Omega}\varphi J\cdot \nabla g(c)+\nabla\varphi\cdot J g(c)-\varphi f_c(c,v)\, g(c)\,dxdt\nonumber\\
\leq &\int_0^{T}\left<M^{s}_1(t,\cdot),\varphi(t,\cdot)\right>dt. \label{term46}
\end{align}
Combining \cref{term46,Mth,Msbnd1}, we obtain
\begin{align}
 &\left|\int_0^{T}\int_{\Omega}\partial_t\varphi G(c)\,dxdt-\int_0^{T}\int_{\Omega}\varphi J\cdot \nabla g(c)+\nabla\varphi\cdot J g(c)-\varphi f_c(c,v)\, g(c)\,dxdt\right|\nonumber\\
 \leq &(1+\chi)\Cr{CBND}(T)\int_0^{T}\left<|D^sg(c(t,\cdot))|,\varphi(t,\cdot)\right>dt. \label{term48_}
\end{align}
Since $C^1([0,T]\times\overline{\Omega};\R_0^+)\cap C_0([0,T],C_0(\overline\Omega;\R_0^+))$ is dense in $C_0([0,T],C_0(\overline\Omega;\R_0^+))$, inequality \cref{term48_} implies for $\mu_g$ from \cref{evolgc} that
\begin{subequations}\label{term49_12}
\begin{align}
&\mu_g\in (C_0([0,T],\Co))^*,\\
& |\left<\mu_g,\varphi\right>|\leq (1+\chi)\Cr{CBND}(T)\int_0^{T}\left<|D^sg(c(t,\cdot))|,\varphi(t,\cdot)\right>dt\qquad\text{for all }0\leq\varphi\in C_0([0,T];\Co).%\label{term49_}
\end{align} 
\end{subequations}
In consequence of \cref{term49_12}, for all $0\leq\zeta\in C_0([0,T])$ we have
% \begin{subequations}
\begin{align}
&\zeta\mu\in (C([0,T],\Co))^*,\nonumber\\
& |\left<\zeta\mu_g,\varphi\right>|\leq (1+\chi)\Cr{CBND}(T)\int_0^{T}\left<|D^sg(c(t,\cdot))|,\varphi(t,\cdot)\right>dt\qquad\text{for all }\psi\in C([0,T];\Co).
 \label{term49}
\end{align}
% \end{subequations}
Combining \cref{RMKV,term49}, we conclude that 
% \begin{subequations}
\begin{align}
 &\mu_g\in {\cal M}_{loc}((0,T);\RM),\nonumber\\
 &|\mu_g(E,\cdot)|\leq (1+\chi)\Cr{CBND}(T)|D^sg(c)(E,\cdot)|\qquad\text{in }\RM\qquad\text{for all Borel }E\text{ such that }\overline{E}\subset (0,T).\label{est33}
\end{align}
% \end{subequations}
\cref{LemRep}\cref{LemRep2} can now be applied to $\mu:=\mu_g$, $g:=(1+\chi)\Cr{CBND}(T)\|D^sg(c)\|_{\RM}$, and any closed interval $I\subset(0,T)$, 
yielding
\begin{align}
 \mu_g\in L^1_{w-*,loc}((0,\infty);\RM)\nonumber
\end{align}
and allowing to replace \cref{est33} by \cref{est33_}.
Finally, combining \cref{est33_,nDc3}, we arrive at \cref{mug}.

\end{proofpart}
\begin{proofpart}[Inequalities for the evolution of $G(c)$: part II]\label{ppII}
Our next set of inequalities for the evolution of $G(c)$ is derived from the approximating problems \cref{appr} and takes the form 
\begin{align}
  &\int_0^T \eta(t)\left<\mu_g(t,\cdot),\rho\right>dt\nonumber\\
  \leq &-\underset{l\to\infty}{\lim\inf}\int_0^T\eta\oPi_{n_l}\int_{\Omega}\rho a(c_{m_ln_l},\nabla c_{m_ln_l})\cdot\nabla g(c_{m_ln_l})\,dxdt+\int_0^T\eta\int_{\Omega} \rho z\cdot\nabla g(c)\,dxdt\nonumber\\
 &+\chi\int_0^T\eta(t)\left<D^s\int_0^{c(t,\cdot)}ug'(u)\,du\cdot\Phi(\nabla v(t,\cdot)),\rho\right>dt,\label{ineq2}
\end{align}
where $(m_l)$ and $(n_l)$ are as in {\it Step \ref{PP1}}.

Testing \cref{appr} with $\eta\rho \oPi_{n_l}g(c_{m_ln_l})$ for $l\in\N$, we estimate using   \cref{propg}, \cref{defJm}, and \cref{asstestG},  the continuity of $\Phi$ and $\nabla v$, partial integration, and the chain rule:
\begin{align}
&-\int_0^T\eta'\int_{\Omega}\rho\Lambda_{n_l}G(c_{m_ln_l})\,dxdt\nonumber\\
 =
&\int_0^T\eta\int_{\Omega}\rho\partial_t\Lambda_{n_l}G(c_{m_ln_l})\,dxdt\nonumber\\
 \leq&\int_0^T\eta\int_{\Omega}\rho\partial_t\Lambda_{n_l}c_{m_ln_l}\oPi_{n_l}g(c_{m_ln_l})\,dxdt\nonumber\\
 =&-\int_0^T\eta\oPi_{n_l}\int_{\Omega}\rho\left(\frac{1}{m_l}\nabla c_{m_ln_l}+a(c_{m_ln_l},\nabla c_{m_ln_l})\right)\cdot\nabla g(c_{m_ln_l})\,dxdt\nonumber\\
 &+\chi\int_0^T\eta\oPi_{n_l}\int_{\Omega}\rho\nabla \int_0^{c_{m_ln_l}}ug'(u)\,du\cdot \Phi(\nabla P_{n_l}v)\,dxdt\nonumber\\
 &-\int_0^T\eta\oPi_{n_l}\int_{\Omega}\nabla\rho\cdot J_{m_l}[c_{m_ln_l},P_{n_l}v] g(c_{m_ln_l})-\rho f_c(c_{m_ln_l},P_{n_l}v) g(c_{m_ln_l})\,dxdt\nonumber\\
 \leq&-\int_0^T\eta\oPi_{n_l}\int_{\Omega}\rho a(c_{m_ln_l},\nabla c_{m_ln_l})\cdot\nabla g(c_{m_ln_l})\,dxdt\nonumber\\
 &+\chi\int_0^T\eta\oPi_{n_l}\int_{\Omega}\rho\nabla \int_0^{c_{m_ln_l}}ug'(u)\,du\cdot \Phi(\nabla P_{n_l}v)\,dxdt\nonumber\\
 &-\int_0^T\eta\oPi_{n_l}\int_{\Omega}\nabla\rho\cdot J_{m_l}[c_{m_ln_l},P_{n_l}v] g(c_{m_ln_l})-\rho f_c(c_{m_ln_l},P_{n_l}v) g(c_{m_ln_l})\,dxdt.\label{est31}
\end{align}
 Combining   \cref{LaGconveL2}, \cref{convDc2}, \cref{convJ}, \cref{PifconveLinf}, and  \cref{convPn} and using the continuity of $\Phi$ and $\nabla v$ and the compensated compactness, we pass to the limit superior on both sides of \cref{est31} and obtain
 \begin{align}
&-\int_0^T\eta'\int_{\Omega}\rho G(c)\,dxdt\nonumber\\
 \leq&-\underset{l\to\infty}{\lim\inf}\int_0^T\eta\oPi_{n_l}\int_{\Omega}\rho a(c_{m_ln_l},\nabla c_{m_ln_l})\cdot\nabla g(c_{m_ln_l})\,dxdt\nonumber\\
 &+\chi\int_0^T\eta(t)\left<D\int_0^{c(t,\cdot)}ug'(u)\,du\cdot\Phi(\nabla v(t,\cdot)),\rho\right>dt-\int_0^T\eta\int_{\Omega}\nabla\rho\cdot J g(c)-\rho f_c(c,v) g(c)\,dxdt.\label{term50}
 \end{align}
Further, on the right-hand-side of \cref{term50} we make use of   \cref{nDc2str}, \cref{nDc3}, and \cref{chaingc} which yields
\begin{align}
&-\int_0^T\eta'\int_{\Omega}\rho G(c)\,dxdt\nonumber\\
 \leq&-\underset{l\to\infty}{\lim\inf}\int_0^T\eta\oPi_{n_l}\int_{\Omega}\rho a(c_{m_ln_l},\nabla c_{m_ln_l})\cdot\nabla g(c_{m_ln_l})\,dxdt+\int_0^T\eta\int_{\Omega} \rho z\cdot\nabla g(c)\,dxdt\nonumber\\
 &-\int_0^T\eta\int_{\Omega}\rho J\cdot\nabla g(c)+\nabla\rho\cdot J g(c)-\rho f_c(c,v) g(c)\,dxdt\nonumber\\
 &+\chi\int_0^T\eta(t)\left<D^s\int_0^{c(t,\cdot)}ug'(u)\,du\cdot\Phi(\nabla v(t,\cdot)),\rho\right>dt.\label{est35}
 \end{align}
 Combining \cref{evolgc} and \cref{est35}, we arrive at   \cref{ineq2}.

\end{proofpart}

\begin{proofpart}[Identifying $z$] Our first application of the inequalities established in {\it Steps \ref{ppI}} and {\it\ref{ppII}} is to the proof of 
\begin{align}
z=a(c,\nabla c)
\qquad\text{ a.e. in }\R_0^+\times\Omega
,\label{zincpos}                                            \end{align}
so that 
\begin{align}
 J=a(c,\nabla c)-\chi c\Phi(\nabla v)\qquad\text{ a.e. in }(0,\infty)\times\Omega\nonumber%
\end{align}
in the limit equation \cref{limeq}.
As is customary in such cases, we couple the Minty-Browder method %
 with compactness.    
Let 
\begin{align}
 y\in\R^d.\nonumber%
\end{align}
Due to the 
 monotonicity of $a$ with respect to its second argument, i.e. property \cref{amon}, it holds that  
\begin{align}
 0\leq&\int_0^T\eta\oPi_{n_l}\int_{\Omega}\rho\left(a(c_{m_ln_l},\nabla c_{m_ln_l}\right)-a(c_{m_ln_l},y))\cdot(\nabla c_{m_ln_l}-y)\,dxdt.\label{pos1}
 \end{align}
 Obviously, the right-hand side of \cref{pos1} can be decomposed as follows:
 \begin{align}
 &\int_0^T\eta\oPi_{n_l}\int_{\Omega}\rho\left(a(c_{m_ln_l},\nabla c_{m_ln_l}\right)-a(c_{m_ln_l},y))\cdot(\nabla c_{m_ln_l}-y)\,dxdt\nonumber\\
 =&\int_0^T\eta\oPi_{n_l}\int_{\Omega}\rho a(c_{m_ln_l},\nabla c_{m_ln_l})\cdot\nabla c_{m_ln_l}\,dxdt\nonumber\\
 &-\int_0^T\eta\oPi_{n_l}\int_{\Omega}\rho\left(a(c_{m_ln_l},\nabla c_{m_ln_l}\right)-a(c_{m_ln_l},y))\cdot y\,dxdt \nonumber\\
 &-\int_0^T\eta\oPi_{n_l}\int_{\Omega}\rho a(c_{m_ln_l},y)\cdot\nabla c_{m_ln_l}\,dxdt. \label{RHSpos1}
\end{align}
We consider the limit behaviour of each of the terms in the decomposition in \cref{RHSpos1} separately. 
To handle the first term, we apply \cref{ineq2} for $g(c):=c$, so that
\begin{align}
 &\underset{l\to\infty}{\lim\inf}\int_0^T\eta\oPi_{n_l}\int_{\Omega}\rho a(c_{m_ln_l},\nabla c_{m_ln_l})\cdot\nabla c_{m_ln_l}\,dxdt\nonumber\\
 \leq&\int_0^T\eta\int_{\Omega} \rho z\cdot \nabla c\,dxdt
 +\chi\int_0^T\eta(t)\left<D^sc(t,\cdot)\cdot\Phi(\nabla v(t,\cdot)),\rho\right>dt-\int_0^T \eta(t)\left<\mu_{id}(t,\cdot),\rho\right>dt.
 \label{term6_}
\end{align}
Combining \cref{est33_,Phi1,nDc4}, we estimate the singular measures on the right-hand side of \cref{term6_} and conclude that
\begin{align}
 &\underset{l\to\infty}{\lim\inf}\int_0^T\eta\oPi_{n_l}\int_{\Omega}\rho a(c_{m_ln_l},\nabla c_{m_ln_l})\cdot\nabla c_{m_ln_l}\,dxdt\nonumber\\
 \leq&\int_0^T\eta\int_{\Omega} \rho z\cdot \nabla c\,dxdt+\C(T)\int_0^T\eta(t)\left<|D^s c(t,\cdot)|,\rho\right>dt.
 \label{term6}
\end{align}
With the help of  \cref{PiconvgvL1,conva} we obtain
\begin{align}
 \int_0^T\eta\oPi_{n_l}\int_{\Omega}\rho\left(a(c_{m_ln_l},\nabla c_{m_ln_l}\right)-a(c_{m_ln_l},y))\cdot y\,dxdt
 \underset{l\rightarrow\infty}{\rightarrow}&\int_0^T\eta\int_{\Omega}\rho(z-a(c,y))\cdot y\,dxdt.\label{conv1}
\end{align}
Due to the chain rule and \cref{convDc2} applied to $g(c):=\int_0^c a(u,y)\,du$   it holds that
\begin{align}
 \oPi_{n_l}\left(a(c_{m_ln_l},y)\cdot\nabla c_{m_ln_l}\right)=&\oPi_{n_l}\nabla\cdot\int_0^{c_{m_ln_l}}a(u,y)\,du\nonumber\\
 \underset{l\rightarrow\infty}{\overset{*}{\rightharpoonup}}&\nabla\cdot\int_0^c a(u,y)\,du\qquad\text{in }{\cal M}_{loc}(\R_0^+;\RM).\label{conv2_}
\end{align}
Utilising 
\cref{chaingc} and the continuity of $a$, we find that 
\begin{subequations}\label{acp1}
\begin{alignat}{3}
 &\left(\nabla\cdot\int_0^{c(t,\cdot)} a(u,y)\,du\right)^{ac}
 =(a(c,y)\cdot\nabla c)(t,\cdot)&&\qquad\text{in }\Omega\text{ for a.a. }t\in(0,T),\\
 &\left|\left(\nabla\cdot\int_0^{c(t,\cdot)} a(u,y)\,du\right)^{s}\right|\leq T\Cr{CBND}(T)|D^s c(t,\cdot)|&&\qquad\text{in }\RM\text{ for a.a. }t\in(0,\infty).
\end{alignat}
\end{subequations}
Combining \cref{conv2_,acp1}, we obtain
\begin{align}
 &\underset{l\to\infty}{\lim}\int_0^T\eta\oPi_{n_l}\int_{\Omega}\rho\left(a(c_{m_ln_l},y)\cdot\nabla c_{m_ln_l}\right)\,dxdt\nonumber\\
 \leq& \int_0^T\eta \int_{\Omega}\rho a(c,y)\cdot\nabla c\,dxdt+T\Cr{CBND}(T)\int_0^T\left<|D^s c(t,\cdot)|,\rho\right>dt.\label{conv2}
\end{align}
Collecting  \cref{pos1,RHSpos1,conv1,conv2,term6}, we finally arrive at
the inequality
\begin{align}
\int_0^T\eta\int_{\Omega}\rho(z-a(c,y))\cdot(\nabla c-y)\,dxdt+T\Cl{Clast}\int_0^T\eta(t)\left<|D^sc(t,\cdot)|,\rho\right>dt\geq0. \label{limmeas}
\end{align}
Given that $\rho$ and $\eta$ are  arbitrary, \cref{limmeas} implies 
\begin{align}
 ((z-a(c,y))\cdot(\nabla c-y))(t,\cdot)+T\Cr{Clast}|D^sc(t,\cdot)|\geq0\qquad\text{in }\RM\text{ for a.a. }t\in(0,T).\label{measuresum}
\end{align}
The ($d$-dimensional) Lebesgue measure and $|D^sc(t,\cdot)|$ are disjoint on $\Omega$. Hence, \cref{measuresum} yields
\begin{align}
 (z-a(c,y))\cdot(\nabla c-y)\geq0\qquad\text{a.e. in }(0,\infty)\times\Omega\text{ for all }y\in\R^d.\label{z1_}
\end{align}
Replacing $\R^d$ by the countable set $\Q^d$, we conclude with \cref{z1_} that
\begin{align}
 (z-a(c,y))\cdot(\nabla c-y)\geq0\qquad\text{for all }y\in\Q^d\text{ a.e. in }(0,\infty)\times\Omega,\label{zineq0}
\end{align} 
Since $\Q^d$ is dense in $\R^d$ and $a$ is continuous in the second variable, \cref{zineq0} implies
\begin{align}
 (z-a(c,y))\cdot(\nabla c-y)\geq0\qquad\text{for all }y\in\R^d\text{ a.e. in }(0,\infty)\times\Omega.\label{zineq}
\end{align}
Combining \cref{zineq} and a standard argument based on the continuity of $a$ with respect to the second variable, we arrive at \cref{zincpos}. 
 \end{proofpart}
 
 \begin{proofpart}[Entropy inequalities]
%  \label{Stepentropy}
In order to establish the entropy inequalities \cref{entropyin}, we choose $g$ of the form
 \begin{align}
  g:=T_{m,M}\label{lscdg}
 \end{align}
 for $0\leq m< M<\infty$. This function is increasing, globally Lipschitz, and  satisfies
 \begin{subequations}
 \begin{align}
  &g'(u)=\begin{cases}
         1&\text{for }u\in(m,M),\\
         0&\text{for }u\in\R_0^+\backslash[m,M],
        \end{cases}\label{derg}\\
&ug'(u)=\frac{1}{2}(g^2(u))'\qquad\text{for }u\in\R_0^+\backslash\{m,M\}.\label{derg2}
 \end{align}
 \end{subequations}
We also make use of the auxiliary function 
\begin{align*}
 L(u,\xi):=u\sqrt{u^2+|\xi|^2}-u^2\qquad\text{for }u\geq0,\ \xi\in\R^d.
%  \label{Lag}
\end{align*}
One readily verifies that \footnotetext[1]{This means that $L$ is a Lagrangian which $a$ is associated with. }
\begin{subequations}\label{propL}
\begin{alignat}{3}
&L\in C^1(\R_0^+\times \R^d)\cap C^2((0,\infty)\times \R^d),&&\label{Lconti}\\
 &L(u,0)=0&&\qquad\text{for }\xi\in\R^d,\label{L0}\\
&0< L(u,\xi)< u|\xi|&&\qquad\text{for }u>0,\ \xi\in\R^d\backslash\{0\},\label{Lsublin}\\
&L(u,\cdot)\text{ is convex in }\R^d&&\qquad\text{for } u\geq0,\label{Lconvex}\\
&\nabla_{\xi}L(u,\xi)=a(u,\xi)\ \footnotemark[1],&&\qquad\text{for }u\geq0,\ \xi\in\R^d,\label{dL}\\
& a(u,\xi)\cdot \xi\geq L(u,\xi)&&\qquad\text{for }u\geq0,\ \xi\in\R^d,\label{conc}\\
&L^{\infty}(u,\xi)=u|\xi|=(a(u,\cdot)\cdot(\cdot))^{\infty}(\xi)&&\qquad\text{for }u\geq0,\ \xi\in\R^d.\label{recessL}
\end{alignat}
\end{subequations}  
 Since $a(\cdot,0)\equiv0$, we have in consequence of \cref{derg,conc} and the chain rule that  
 \begin{align}
  a(u,\nabla u)\cdot \nabla g(u)=&a(u,\nabla g(u))\cdot \nabla g(u)\nonumber\\
  \geq &L(g(u),\nabla g(u))\qquad\text{a.e. in }\Omega \text{ for all }u\in\WOne.\label{compaL}
 \end{align}
 Consider now 
 \begin{align}
  f (x,w,\xi):=\rho(x)L\left(w_+,\xi\right)\qquad\text{for }(x,w,\xi)\in\Omega\times\R\times\R^d\nonumber%
 \end{align} 
  and the corresponding relaxation 
  functional \cite{DalMaso} for $0\leq w\in\BV\cap\LInf$:
  \begin{align*}
   {\cal R}_f(w):=&\int_{\Omega}f(x,w(x),\nabla w(x))\,dx+\int_{\Omega\backslash S_w}f^{\infty}(x,\widetilde w(x),\nu_w(x))\,d|D^c w|(x)\nonumber\\
   &+\int_{J_u} \frac{\int_{w^-(x)}^{w^+(x)}f^{\infty}(x,z,\nu_w(x))\,dz}{w^+(x)-w^-(x)}\,d |D^j w|(x)\nonumber\\
   =&\int_{\Omega}\rho L\left(w,\nabla w\right)\,dx+\frac{1}{2}\int_{\Omega}\rho \,d|D^s w^2|.%\label{R}
\end{align*}  
Here we used \cref{recessL} and the chain rule for BV functions to simplify the expression for ${\cal R}_f$. 
By the assumptions on $\rho$ in \cref{rhoeta} and properties \cref{Lconti,Lconvex,Lsublin} of $L$  the requirements of  \cite[Theorem 1.1]{CiccoFuscoVerde} are satisfied for $f$. Therefore, ${\cal R}_f$
is lower semi-continuous in $\BV$ equipped with the $\LOne$ norm. Together with \cref{PiconvgvL1ae} this implies
\begin{align}
 &\underset{l\to\infty}{\lim\inf}\int_{\Omega}\rho L(\oPi_{n_l}g(c_{m_ln_l}),\nabla \oPi_{n_l}g(c_{m_ln_l}))\,dx(t,\cdot)\nonumber\\
 \geq &\int_{\Omega}\rho L\left(g(c),\nabla g(c)\right)\,dx(t,\cdot)+\left<\frac{1}{2}|D^s g^2(c(t,\cdot))|,\rho\right>\qquad \text{ for a.a. }t\in(0,T).\label{est32aa}
\end{align}
Consequently, Fatou's lemma combined with \cref{nDc4} and \cref{est32aa} allows to conclude that 
\begin{align}
 &\underset{l\to\infty}{\lim\inf}\int_0^T\eta\int_{\Omega}\rho L(\oPi_{n_l}g(c_{m_ln_l}),\nabla \oPi_{n_l}g(c_{m_ln_l}))\,dxdt\nonumber\\
 \geq &\int_0^T\eta\,\underset{l\to\infty}{\lim\inf}\int_{\Omega}\rho L(\oPi_{n_l}g(c_{m_ln_l}),\nabla \oPi_{n_l}g(c_{m_ln_l}))\,dxdt\nonumber\\
 \geq &\int_0^T\eta\int_{\Omega}\rho L\left(g(c),\nabla g(c)\right)\,dxdt+\int_0^T\eta(t)\left<\frac{1}{2}|D^s g^2(c(t,\cdot))|,\rho\right>\,dt.\label{est32}
\end{align}
We use \cref{compaL,est32} to estimate the first term on right-hand side of \cref{ineq2} and then \cref{derg2}, yielding
\begin{align}
  \int_0^T \eta(t)\left<\mu_g(t,\cdot),\rho\right>dt
  \leq &\int_0^T\eta\int_{\Omega}\rho \left(-L\left(g(c),\nabla g(c)\right)+z\cdot\nabla g(c)\right)\,dx\nonumber\\
 &+\int_0^T\eta(t)\left<-\frac{1}{2}|D^s g^2(c(t,\cdot))|+\frac{\chi}{2}D^s g^2(c(t,\cdot))\cdot\Phi(\nabla v(t,\cdot)),\rho\right>dt.\label{ineq2_}
\end{align}
Since $\rho$ and $\eta$ are  arbitrary and $\mu_g(t,\cdot)$ is singular for a.a. $t$, \cref{ineq2_} implies 
 \begin{align}
  \mu_g(t,\cdot)\leq -\frac{1}{2}|D^s g^2(c(t,\cdot))|+\frac{\chi}{2}D^s g^2(c(t,\cdot))\cdot\Phi(\nabla v(t,\cdot))\qquad\text{in }\RM\text{ a.e. in }(0,\infty).\label{ineq5}
 \end{align}
Finally, combining  \cref{evolgc} and \cref{ineq5} and recalling \cref{lscdg}, we obtain the entropy inequality \cref{EstAbove}. As to \cref{EstBelow}, it is a special case of the lower bound for $\mu_g$ provided in \cref{mugen}.
 
 \end{proofpart}

 \begin{proofpart}[Stability] 
 In order to obtain \cref{contic}, we  aim to pass to the limit along suitable subsequences in \cref{contice}.
We need some preparation. 

 As is well-known, each $0\leq c_0\in\LInf$ can be approximated in the $\LOne$-norm by an increasing sequence of nonnegative simple functions from a countable set ${\cal C}_0$. Further, by exploiting standard separability results, one readily confirms that each $v$ that belongs to 
 \begin{align}
   C(\R_0^+;C^1_{\nu}(\overline{\Omega}))\cap L^1_{loc}(\R_0^+;W^{2,1}(\Omega))\nonumber
 \end{align}
 can be approximated by a sequence $(v_k)$ from a countable subset ${\cal C}_v$ of this space in the following way: for all $T\in\N$
 \begin{align*}
  &\|v_k-v\|_{C([T,T+1];C^1_{\nu}(\overline{\Omega}))}\underset{k\to\infty}{\to}0,\\
  &\|v_k-v\|_{L^1_{loc}((T,T+1);W^{2,1}(\Omega))}\underset{k\to\infty}{\to}0,\\
  &\|v_k\|_{L^{\infty}((0,T);\LInf)}\leq\|v\|_{L^{\infty}((0,T);\LInf)}\qquad\text{for all }k\in\N.
 \end{align*}
For $(c_0,v)$ that satisfies \cref{c0vreg} let $(m_l(c_0,v))$ and $(n_l(c_0,v))$ denote two corresponding sequences as constructed in {\it Step \ref{PP1}} of the present proof. 
Since ${\cal C}_0\times {\cal C}_v$ is countable, the diagonal argument allows to choose these sequences in the following way: 
\begin{alignat*}{3}
 &m_l(c_0,v)=m_l^{(*)}&&\qquad\text{for all }l\in\N,\ (c_0,v)\in {\cal C}_0\times {\cal C}_v,\\
 &n_l(c_0,v)=n_l^{(*)}&&\qquad\text{for all }l\in\N,\ (c_0,v)\in {\cal C}_0\times {\cal C}_v,
\end{alignat*}
and 
\begin{alignat*}{3}
 &(m_l(c_0,v))\text{ is a subsequence of }\left(m_l^{(*)}\right)&&\qquad\text{for all }(c_0,v)\text{ as in }\cref{c0vreg},\\
 &(n_l(c_0,v))\text{ is a subsequence of }\left(n_l^{(*)}\right)&&\qquad\text{for all }(c_0,v)\text{ as in }\cref{c0vreg},
\end{alignat*}
where $\left(m_l^{(*)}\right)$ and $\left(n_l^{(*)}\right)$ are some fixed sequences.

Now we are ready to prove \cref{contic} for the case when $\left(c_0^{(1)},v^{(1)}\right)$ satisfies \cref{c0vreg} and $\left(c_0^{(2)},v^{(2)}\right)\in {\cal C}_0\times {\cal C}_v$. Choosing 
$$m:=m_l\left(c_0^{(1)},v^{(1)}\right)\qquad\text{and}\qquad n:=n_l\left(c_0^{(1)},v^{(1)}\right)$$
in \cref{contice} for sufficiently large $l\in\N$, so that $n_l\left(c_0^{(1)},v^{(1)}\right)>N$ for $N$ as in \cref{LemSolAppr}\cref{itemunn}, we can pass to the limit in \cref{contice} using  \cref{C20conv}, \cref{XTM}, \cref{F4}, \cref{MT}, and \cref{convL1ae} and the elementary fact that  $\exp_{n,C}(t)\to e^{tC}$ as $n\to\infty$. This yields \cref{contic} for almost all $t\in(0,T)$. Taking into account $c^{(1)}-c^{(2)}\in C_{w}([0,T];\LOne)$ and the lower weak semicontinuity of norms, we conclude that \cref{contic} is valid for all $t\in[0,T]$.
With this knowledge at hand we turn to the general data
 $\left(c_0^{(i)},v^{(i)}\right)$ satisfying  \cref{c0vreg} for $i\in\{1,2\}$. Approximating each component in the way as described at the beginning of this Step of the proof by the respective sequences $\left(c_{k0}^{(i)},v_k^{(i)}\right)$, we obtain for the corresponding solutions, $c^{(1)}$ and $c^{(2)}$, 
\begin{align}
 &\left\|c^{(1)}-c^{(2)}\right\|_{\LOne}\nonumber\\
 \leq&\underset{k\to\infty}{\lim\inf}\left(\left\|c_k^{(1)}-c_k^{(2)}\right\|_{\LOne}+\left\|c_k^{(1)}-c^{(1)}\right\|_{\LOne}+\left\|c_k^{(2)}-c^{(2)}\right\|_{\LOne}\right)\nonumber\\
 \leq&e^{T\underset{(0,M(T))^2}{\max}\partial_cf}\underset{k\to\infty}{\lim}\left(\left\|c_{k0}^{(1)}-c_{k0}^{(2)}\right\|_{\LOne}+\left\|c_{k0}^{(1)}-c_{0}^{(1)}\right\|_{\LOne}+\left\|c_{k0}^{(2)}-c_{0}^{(2)}\right\|_{\LOne}\right)\nonumber\\
&+\Cr{C361}(M(T))\nonumber\\ &\ \ \ \cdot\underset{k\to\infty}{\lim}\left(\left\|v_k^{(1)}-v_k^{(2)}\right\|_{C([0,T];C^1(\overline\Omega))}+\left\|v_k^{(1)}-v^{(1)}\right\|_{C([0,T];C^1(\overline\Omega))}+\left\|v_k^{(2)}-v^{(2)}\right\|_{C([0,T];C^1(\overline\Omega))}\right)\nonumber\\
&+\Cr{C361}(M(T))\nonumber\\ &\ \ \ \cdot\underset{k\to\infty}{\lim}\left(\left\|v_k^{(1)}-v_k^{(2)}\right\|_{L^1((0,T);W^{2,1}(\Omega))}+\left\|v_k^{(1)}-v^{(1)}\right\|_{L^1((0,T);W^{2,1}(\Omega))}+\left\|v_k^{(2)}-v^{(2)}\right\|_{L^1((0,T);W^{2,1}(\Omega))}\right)\nonumber\\
=&e^{T\underset{(0,M(T))^2}{\max}\partial_cf}\left\|c_0^{(1)}-c_0^{(2)}\right\|_{\LOne}\nonumber\\
&+\Cr{C361}(M(T))\left(\left\|v^{(1)}-v^{(2)}\right\|_{C([0,T];C^1(\overline\Omega))}+\left\|v^{(1)}-v^{(2)}\right\|_{L^1((0,T);W^{2,1}(\Omega))}\right),\nonumber
\end{align}
as required. This completes the proof of \cref{thmSolEq1}.

\end{proofpart}

\end{proof}
Several remarks are in order. 
\begin{Remark}\label{RemLB}
 Since {\it Step \ref{ppI}} of the above proof relies on the weak formulation \cref{limeq_} alone, the lower bound \cref{EstBelow} holds for every weak solution. 
\end{Remark}

\begin{Remark}\label{RemMethod}
 Let us compare the way we have handled {the 
%  existence of solutions to the 
  FL} reaction-diffusion-taxis equation \cref{flc} with the treatment of a FL (reaction-)diffusion equation such as presented  in {\cite{ACM021,ACM02,ACM05,ACM05Cauchy,ACMM06,ACM08,ACMM10} and other works}. These are the main differences.
 \begin{enumerate}
 \item Due to the presence of   taxis and a general {source} term, bounds for $\|c\|_{\LInf}$ had to be derived using a method that is common in analysis of taxis systems. In the present case, $c$ is, in general, only bounded over finite time intervals. 
 \item {In the proof of \cref{Lemnablac} the equation has been tested with $\ln(\max\{c,{m}\})$ rather than $T_{{m},M}(c)$, where $0<{m}<M<\infty$. This has led to the improved regularity 
 \begin{align*}
 c\in L_{w,loc}^1(\R_0^+;\BV)
 \end{align*}
compared to 
\begin{align*}
 T_{{m},M}(c)\in L_{w,loc}^1(\R_0^+;\BV)\qquad \text{for }0<{m}<M<\infty\qquad \text{(\cite{ACM021,ACM02,ACM05,ACM05Cauchy,ACMM06,ACM08,ACMM10}, etc.)}.
\end{align*}
 }
  \item We chose not to study the non-regularised, i.e. without the inclusion of operator $1/m\Delta$ in \cref{ellpr} (recall \cref{DefBmn}), disrectisations of \cref{IBVPc}. As a result, we cannot avail of the mild solution  typically obtained in the limit as $n\to\infty$ by the theory developed in  \cite{CranLigg}. Instead, our solution, as constructed in {\it Step \ref{PP1}} of the above proof, is a limit of a subsequence $(n_l)$ that may depend on $c_0$ and $v$. This mostly affected the proof of the stability property \cref{contic}, making it more involved than it would have been otherwise. 
  \item {Bypassing the  mild solution has further led to a weaker continuity in time, so that only 
  \begin{align*}
   c\in C_w(\R_0^+;\LOne)
  \end{align*}
  has been established and not
  \begin{align*}
   c\in C(\R_0^+;\LOne)\qquad \text{(\cite{ACM021,ACM02,ACM05,ACM05Cauchy,ACMM06,ACM08,ACMM10}, etc.)}.
  \end{align*}
  }
  \item   
  On the whole, more use of compactness {has been}  necessary by the presence of {nonlinear} terms other than diffusion.
  \item Interpolation operators and their properties (presented in  \cref{AppendixInter}) {have been used extensively}. It {has} allowed to embed the elliptic approximations into the framework of continuous-time equations. 
  \item {The use of} several lemmas about functions and measures with values in Banach spaces (presented in \cref{AppVM}) {has} further streamlined the analysis. 
  \item {We have not dealt with uniqueness of entropy solutions (for fixed $v$, that is). It appears that it could be obtained in the very same way as in \cite{ACM05}, after verifying conditions on the time derivative of $c$ as well as its spatial boundary trace akin to those included in \cite[Definition 6]{ACM05} but not in \cref{Defsol1}. Since the arguments in that proof do not require $C(\R_0^+;\LOne)$, our solution must then coincide with the unique entropy solution of that kind. In particular, for $v\equiv const$, we regain the mild solution for the reaction-diffusion equation.
%   \end{enumerate}
  }
 \end{enumerate}

\end{Remark}

\section{Strong well-posedness of \texorpdfstring{\cref{IBVPv}}{} for fixed \texorpdfstring{$c$}{c}}\label{Secv}
Unlike the non-trivial flux-limited equation \cref{flc}, the   semilinear parabolic PDE \cref{flv} can be handled using a classical theory. Similarly to \cref{thmSolEq1}, we provide a solvability result that is  formulated in terms of a solution mapping. This time, however, we have chosen not to track the dependence upon the initial value since the equation is a standard one. 
\begin{Lemma}[Strong solutions to \cref{IBVPv}]\label{thmSolEq2} Let $D_v>0$ and assumptions \cref{fv,v0bnd} be satisfied. 
Then there exists a mapping $\Ss_2$ that maps any $c$ that satisfies  \cref{regcinv} to the unique $v$ such that $(c,v)$ solves \cref{IBVPv} in terms of \cref{Defsol2} and for all $T>0$ 
\begin{align}
&\|v\|_{L^{\infty}((0,T);\LInf)}\leq \C\left(T,\|c\|_{L^{\infty}((0,T);\LInf)}\right).\label{estvbnd}
\end{align}
Mapping $\Ss_2$ is locally Lipschitz  continuous in the following sense: if $v^{(i)}:=\Ss_2\left(c^{(i)}\right)$, then for all $T>0$
 \begin{subequations}\label{conticv}
 \begin{align}
  &\left\|v^{(1)}-v^{(2)}\right\|_{W^{1,p_0}((0,T);L^{p_0}(\Omega))}+\left\|v^{(1)}-v^{(2)}\right\|_{L^{p_0}((0,T);W^{2,p_0}(\Omega))}
  \nonumber\\
  \leq&\C\left(T,M\left[c^{(1)},c^{(2)}\right](T)\right) \left\|c^{(1)}-c^{(2)}\right\|_{L^{p_0}((0,T);L^{p_0}(\Omega))},
 \end{align}
  where 
\begin{align}
  M\left[c^{(1)},c^{(2)}\right](T):=\left\|\left(c^{(1)},c^{(2)}\right)\right\|_{(L^{\infty}((0,T);\LInf))^2}.
 \end{align}
 \end{subequations}

\end{Lemma}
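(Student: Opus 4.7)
The plan is to construct $v = \Ss_2(c)$ via maximal $L^{p_0}$-regularity for the heat equation under Neumann boundary conditions, combined with an \emph{a priori} $L^\infty$-bound that turns the one-sided sublinear growth of $f_v$ into an effective truncation. Both uniqueness and the local Lipschitz estimate \cref{conticv} will then follow from the same maximal-regularity estimate, now applied to the linear equation satisfied by the difference of two solutions. Throughout I would use that the embedding \cref{embeddings} canonically places elements of the regularity class \cref{regv} into $C([0,T]; C^1_{\nu}(\overline\Omega))$, so that boundary and initial values make sense pointwise as in \cref{contiv}.

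For existence, set $M := \|c\|_{L^\infty((0,T); \LInf)}$, $\tilde C_1 := \max_{s \in [0,M]} \Cr{fv1}(s)$, and $\tilde C_2 := \max_{s \in [0,M]} \Cr{fv2}(s)$, and let $w(t) := (\|v_0\|_{\LInf} + \tilde C_2) e^{\tilde C_1 t} - \tilde C_2$ solve $w' = \tilde C_1(w + \tilde C_2)$ with $w(0) = \|v_0\|_{\LInf}$. Using $f_v(c, v) \leq \tilde C_1(v + \tilde C_2)$ and a parabolic comparison argument (e.g.\ testing the equation for $(v - w)_+$ and using $\partial_\nu(v-w) \leq 0$), one obtains $v \leq w$ on $[0, T]$, i.e.\ \cref{estvbnd}; similarly $f_v(c, 0) \geq 0$ gives $v \geq 0$ by comparison with the zero function. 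With $K := 1 + \C(T, M)$ from the resulting bound, replace $f_v$ by the globally $v$-Lipschitz truncation $\tilde f_v(c, v) := f_v(c, T_{0, K}(v))$. Maximal $L^{p_0}$-regularity for $-D_v \Delta$ on $L^{p_0}(\Omega)$ with domain $W^{2, p_0}_{\nu}(\Omega)$ (see e.g.\ \cite{Amannbook95}), combined with Banach's fixed-point theorem for the map $u \mapsto$ solution of $\partial_t v - D_v \Delta v = \tilde f_v(c, u)$ on short time intervals, produces a unique $v$ in the class \cref{regv}; concatenation together with the a priori bound yields a global solution, and since $v \leq K - 1$ the truncation is inactive, so $v$ solves \cref{IBVPv} strongly.

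For uniqueness and stability, let $v^{(i)} := \Ss_2(c^{(i)})$ and $w := v^{(1)} - v^{(2)}$. Applying the mean value theorem to $f_v \in C^1$ yields $L^\infty$ coefficients $A, B$, with norms controlled by $M[c^{(1)}, c^{(2)}](T)$ and the a priori bound \cref{estvbnd}, such that
\begin{align*}
f_v(c^{(1)}, v^{(1)}) - f_v(c^{(2)}, v^{(2)}) = A(c^{(1)} - c^{(2)}) + B w.
\end{align*}
Hence $w$ satisfies a linear parabolic equation with zero initial and Neumann data and source $Bw + A(c^{(1)} - c^{(2)})$. Maximal $L^{p_0}$-regularity on $(0, t) \subset (0, T)$ gives
\begin{align*}
\|w\|_{W^{1,p_0}((0,t); L^{p_0}) \cap L^{p_0}((0,t); W^{2,p_0})} \leq \C \left( \|w\|_{L^{p_0}((0,t); L^{p_0})} + \|c^{(1)} - c^{(2)}\|_{L^{p_0}((0,t); L^{p_0})} \right),
\end{align*}
whereupon Gronwall absorption of the first right-hand term (via Minkowski in time) delivers \cref{conticv}. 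Uniqueness is the special case $c^{(1)} = c^{(2)}$.

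The main obstacle is mostly bookkeeping rather than genuine analytical difficulty: one must verify that $v_0 \in W^{2(1 - 1/p_0), p_0}_{\nu}$ is the correct trace space for the $L^{p_0}$-based Neumann maximal-regularity framework and that the Lipschitz constants controlling the fixed-point contraction and the Gronwall absorption stay finite under the a priori bound. In contrast to the flux-limited equation \cref{flc}, the equation \cref{flv} is non-degenerate and semilinear, so no measure-valued gradients or entropy considerations are needed here.
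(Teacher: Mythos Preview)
Your proposal is correct and follows essentially the same approach as the paper: comparison/maximum principle for the a priori $L^\infty$ bound and nonnegativity, standard semilinear parabolic theory together with maximal $L^{p_0}$-regularity for existence in the class \cref{regv}, and then maximal regularity applied to the difference equation plus a Gronwall argument for stability and uniqueness. You spell out several details (the explicit supersolution $w$, the truncation $\tilde f_v$, the Banach fixed-point construction) that the paper merely subsumes under ``standard theory'' and ``the maximum principle''; the only minor deviation is that the paper carries the pointwise-in-time trace term $\|(v^{(1)}-v^{(2)})(t)\|_{L^{p_0}}^{p_0}$ explicitly on the left of the maximal-regularity estimate, which makes the Gronwall step immediate, whereas your ``Minkowski in time'' absorption requires first invoking the embedding into $C([0,t];L^{p_0})$.
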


\begin{proof}[{\bf Proof of} \cref{thmSolEq2}]
\begin{proofpart}[Existence, uniqueness, and bounds]
Thanks to the assumptions on $c$ and $f_v$, estimate \cref{estvbnd} as well as the non-negativity hold a priori by the maximum principle. Standard theory of semilinear parabolic equations  provides the existence and uniqueness of a global weak solution $v$ to \cref{IBVPv} that satisfies \cref{estvbnd}.  Exploiting the maximal Sobolev regularity for linear parabolic equations in $L^p$ (e.g. as stated in \cite[Chapter 4 Remark 4.10.9$(c)$]{Amannbook95}) yields \cref{regv}.
\end{proofpart}
\begin{proofpart}[Stability]
Consider solutions $v^{(1)}$ and $v^{(2)}$ that correspond to some $c^{(1)}$ and $c^{(2)}$, respectively, and have the same initial value. 
Using \cite[Chapter 4 Remark 4.10.9$(c)$]{Amannbook95}, \cref{estvbnd}, and $f_v\in C^1$, we obtain for $0<t<T<\infty$
\begin{align}
 &\left\|\left(v^{(1)}-v^{(2)}\right)(t,\cdot)\right\|_{L^{p_0}(\Omega)}^{p_0}+\left\|v^{(1)}-v^{(2)}\right\|_{W^{1,p_0}((0,t);L^{p_0}(\Omega))}^{p_0}+\left\|v^{(1)}-v^{(2)}\right\|_{L^{p_0}((0,t);W^{2,p_0}(\Omega))}
  ^{p_0}\nonumber\\
 \leq& \C(T)\int_0^t\left\|f_v\left(c^{(1)},v^{(1)}\right)-f_v\left(c^{(2)},v^{(2)}\right)\right\|_{L^{p_0}(\Omega)}^{p_0}\,ds\nonumber\\
 \leq&\Cl{C21}\left(T,M\left[c^{(1)},c^{(2)}\right](T)\right)\left(\int_0^t\left\|v^{(1)}-v^{(2)}\right\|_{L^{p_0}(\Omega)}^{p_0}\,ds+\int_0^t\left\|c^{(1)}-c^{(2)}\right\|_{L^{p_0}(\Omega)}^{p_0}\,ds\right).\label{est21}
\end{align}
Omitting the second and third summands on the left-hand side of \cref{est21} and using Gronwall's lemma, we arrive at
 \begin{align}
  \left\|v^{(1)}-v^{(2)}\right\|_{L^{\infty}((0,T),L^{p_0}(\Omega))}^{p_0}\leq \C\left(T,M\left[c^{(1)},c^{(2)}\right](T)\right) \left\|c^{(1)}-c^{(2)}\right\|_{L^{p_0}((0,T);L^{p_0}(\Omega))}^{p_0}.\label{est22}
 \end{align}
 Altogether, \cref{est21,est22} yield \cref{conticv}.

\end{proofpart}

\end{proof}

\section{Existence of solutions to system \texorpdfstring{\cref{IBVPc}-\cref{IBVPv}}{}}\label{SecSchauder} 
\cref{thmSolEq1,thmSolEq2} provide solvability of \cref{IBVPc}  for given $v$ and  \cref{IBVPv} for given $c$, respectively. With these results at hand, we can now prove our main \cref{mainthm} on the solvability of the complete system  \cref{IBVPc}-\cref{IBVPv}.
\begin{proof}[{\bf Proof of}  \cref{mainthm}]
\begin{proofpart}[Reduction to local existence]
Observe that it  suffices to establish the existence of a joint solution to \cref{IBVPc}-\cref{IBVPv} on a finite time interval of a fixed length, e.g. on $[0,1]$. This is because the system is autonomous and we are looking for solutions that due to \cref{ClassC_} and \cref{contiv} satisfy 
\begin{align*}
 (c,v)(1,\cdot)\in\left(\LInf,W^{2\left(1-\frac{1}{p_0}\right),p_0}_{\nu}\right),
\end{align*}
i.e. have the value at $t=1$ with the same regularity as prescribed at $t=0$, so that this value can serve as an initial value for the time interval $[1,2]$ and so on. Thus, a solution for all times $t\in\R_0^+=\underset{N\in\N_0}{\cup}[N,N+1]$ can be build recursively. It is evident that it  necessarily satisfies \cref{regc,regv}.
\end{proofpart}
\begin{proofpart}[Fixed point argument]
We aim at applying  Schauder's fixed point theorem in order to solve \cref{IBVPc}-\cref{IBVPv} for $t\in[0,1]$. 
 Let 
 \begin{align*}
 &{\cal V}:=L^1((0,1);\LOne),\\
 &\X :=\left\{u\in L^{\infty}((0,1);\LInf):\ \|u\|_{L^{\infty}((0,1);\LInf)}\leq \Cr{CBND}(1)\right\},    \nonumber  \\
 &\Ss :\X \to \X ,\qquad \Ss (c):=\Ss_1(c_0,\Ss_2(c))\text{ for }c\in \X .
 \end{align*}
 By \cref{thmSolEq1,thmSolEq2}, mapping $\Ss $ is well-defined. Its fixed points are local solutions of the kind we need. To complete the proof, we verify the assumptions of Schauder's theorem for  $\Ss $ in $\X \subset{\cal V}$.
 
 Set $\X $ is a closed bounded convex subset of ${\cal V}$.  By construction, $\Ss (\X )$ consists of entropy solutions to the IBVP \cref{IBVPc} that belong to  the {\it closure} of 
 \begin{align}
  &\left\{c\in \X:\  c\in L^1((0,1);\WOne)\cap W^{1,1}((0,1);W^{-1,\infty}(\Omega))\right.\nonumber\\
  &\left. \phantom{c\in \X:\ \ }\text{ and }  \|c\|_{L^1((0,1);\WOne)}+\|c\|_{W^{1,1}((0,1);W^{-1,\infty}(\Omega))}\leq \C\right\}.\nonumber
 \end{align}
 The set is precompact in ${\cal V}$ and, hence, in its closed subset $\X $. This is a consequence of a version of the Lions-Aubin lemma \cite[Section 8 Corollary 4]{Simon}.  Consequently, $\Ss (\X )$ is precompact in ${\cal V}$.
 It remains to verify the continuity of $\Ss $. 
Let $c^{(i)}:=\Ss \left(\bar c^{(i)}\right)$ and $v^{(i)}:=\Ss_2\left(\bar c^{(i)}\right)$ for $i\in\{1,2\}$. 
Combining   \cref{contic,conticv,cbound,estvbnd}, embeddings \cref{embeddings},   and the interpolation inequality for $L^p$ spaces,
we estimate as follows:
 \begin{align}
 \left\|c^{(1)}-c^{(2)}\right\|_{L^1((0,1);\LOne)}
 \leq&\C\left(\left\|v^{(1)}-v^{(2)}\right\|_{C([0,1];C^1(\overline\Omega))}+\left\|v^{(1)}-v^{(2)}\right\|_{L^1((0,1);W^{2,1}(\Omega))}\right)\nonumber\\
 \leq&\C\left(\left\|v^{(1)}-v^{(2)}\right\|_{W^{1,p_0}((0,T);L^{p_0}(\Omega))}+\left\|v^{(1)}-v^{(2)}\right\|_{L^{p_0}((0,T);W^{2,p_0}(\Omega))}\right)\nonumber\\
 \leq &\C\left\|\bar c^{(1)}-\bar c^{(2)}\right\|_{L^{p_0}((0,1);L^{p_0}(\Omega))}\nonumber\\
 \leq &\C\left\|\bar c^{(1)}-\bar c^{(2)}\right\|_{L^{\infty}((0,1);\LInf)}^{1-\frac{1}{p_0}}\left\|\bar c^{(1)}-\bar c^{(2)}\right\|_{L^1((0,1);\LOne)}^{\frac{1}{p_0}}\nonumber\\
 \leq &\C\left\|\bar c^{(1)}-\bar c^{(2)}\right\|_{L^1((0,1);\LOne)}^{\frac{1}{p_0}}.\nonumber%
 \end{align}
This means that $\Ss $ is a (H\"older) continuous operator in $\X $.

\end{proofpart}
\end{proof}

\section*{Acknowledgement}
\addcontentsline{toc}{section}{Acknowledgement}
\begin{itemize}
\item {The author expresses her gratitude to the anonymous reviewers for their helpful comments that  contributed to the improvement of the paper.}
\item 
The author was supported by the Engineering and Physical Sciences Research Council [grant number
EP/T03131X/1].
\item For the purpose of open access, the author has applied a Creative Commons Attribution (CC BY) licence to any Author Accepted Manuscript version arising.
\item No new data were generated or analysed during this study.                                                       \end{itemize}

\phantomsection
\printbibliography
\crefalias{section}{appendix}
\begin{appendices}
\section{Interpolation and discretisation in Bochner spaces}\label{AppendixInter}
In this Section, we collect some useful properties of the disrectisation and interpolation operators introduced in \cref{PrelimDI}.
These properties are quite well-known. Since we could not find the complete statements and proofs in a single source, we include them here for the sake of completeness.

\begin{Lemma}\label{LemComparison}
 Let $n\in\N$, $T\in \frac{1}{n}\N$, $X$ be some Banach space, $u,u_n:\frac{1}{n}\N_0\cap[0,T]\to X$, and $\widetilde u\in \LpX$ for some $p\in[1,\infty]$.
 Then
 \begin{subequations}\label{estPiLa}
 \begin{align}
  &\|\uPi_n u\|_{\LpX}=
  \begin{cases}\left(\frac{1}{n}\sum_{k=0}^{nT-1}\left\|u\left(\frac{k}{n}\right)\right\|_{X}^p\right)^{\frac{1}{p}}&\text{for }p\in[1,\infty),\\
  \max\left\{\left\|u\left(\frac{k}{n}\right)\right\|_X:\ k\in\{0,\dots,nT-1\}\right\}&\text{for }p=\infty,                    
\end{cases}
\label{uPinorm}\\
  &\|\oPi_n u\|_{\LpX}=\begin{cases}\left(\frac{1}{n}\sum_{k=1}^{nT}\left\|u\left(\frac{k}{n}\right)\right\|_{X}^p\right)^{\frac{1}{p}}&\text{for }p\in[1,\infty),\\
  \max\left\{\left\|u\left(\frac{k}{n}\right)\right\|_X:\ k\in\{1,2,\dots,nT\}\right\}&\text{for }p=\infty,                    
\end{cases}\label{oPinorm}\\
&\|\Lambda_n u\|_{\LpX}\leq \begin{cases}
\left(\|\oPi_n u\|_{\LpX}^p+\frac{1}{2n}\|u_n(0)\|_X^p\right)^{\frac{1}{p}}&\text{for }p\in[1,\infty),\\
\max\left\{\left\|u\left(\frac{k}{n}\right)\right\|_X:\ k\in\{0,1,\dots,nT\}\right\}&\text{for }p=\infty,  
\end{cases}
\label{LaPinorm}\\
  &\|\Lambda_n u-\uPi_nu\|_{\LpX}
  =
  \|\oPi_nu-\Lambda_n u\|_{\LpX}
  =C_p\|\oPi_n u- \uPi_nu\|_{\LpX},\nonumber\\ 
  &\text{where }C_p:=\begin{cases}
  \frac{1}{(p+1)^{\frac{1}{p}}}&\text{for }p\in[1,\infty),\\
   1&\text{for }p=\infty;                                           \end{cases}
\label{Lanorm}
 \end{align}
 \end{subequations}
 \begin{align}
   \|\widetilde u\|_{\LpX}\geq 
   \begin{cases}\left(\frac{1}{n}\sum_{k=1}^{nT}\left\|P_n\widetilde u\left(\frac{k}{n}\right)\right\|_X^p\right)^{\frac{1}{p}}&\text{for }p\in[1,\infty),\\
    \max\left\{\left\|P_n\widetilde u\left(\frac{k}{n}\right)\right\|_X:\ k\in\{1,2,\dots,nT\}\right\}&\text{for }p=\infty;
   \end{cases}\label{PiP_}
 \end{align}
 \begin{align}
  &\|\T_nP_n\widetilde u-\widetilde u\|_{\LpX}\limn 0\qquad
  \text{for }\begin{cases}
     p\in[1,\infty),\\
     p=\infty\text{ and }\widetilde u\in C([0,T];X)                                    \end{cases}
\text{ and }\T\in\{\uPi,\oPi,\Lambda\}.\label{convPn}
 \end{align}
 If $p\in[1,\infty)$, then
 \begin{subequations}\label{equivp}
 \begin{align}
  &\quad\|\uPi_n u_n-\widetilde u\|_{\LpX}\limn 0\quad\text{and}\quad\frac{1}{n^{\frac{1}{p}}}\|u_n(T)\|_X\limn0\nonumber\\
  \Leftrightarrow&\quad\|\oPi_n u_n-\widetilde u\|_{\LpX}\limn 0\quad\text{and}\quad\frac{1}{n^{\frac{1}{p}}}\|u_n(0)\|_X\limn0\label{convoPi}\\
  \Rightarrow&\quad\|\Lambda_n u_n-\widetilde u\|_{\LpX}\limn 0.\label{convLuoPi}
 \end{align}
\end{subequations}
If $X$ is a Hilbert space and $p=2$, then
\begin{align}
 \|\Lambda_nu\|_{\LtwoX}\geq \frac{1}{\sqrt{6}}\left(\|\uPi_n u\|_{\LtwoX}^2+\|\oPi_n u\|_{\LtwoX}^2\right)^{\frac{1}{2}}\label{Lanorm2}
\end{align}
and
\begin{subequations}\label{equiv2}
 \begin{align}
  &\quad\|\uPi_n u_n-\widetilde u\|_{\LtwoX}\limn 0\quad\text{and}\quad\frac{1}{n}\|u_n(T)\|_X^2\limn0\nonumber\\
  \Leftrightarrow&\quad\|\oPi_n u_n-\widetilde u\|_{\LtwoX}\limn 0\quad\text{and}\quad\frac{1}{n}\|u_n(0)\|_X^2\limn0\label{convoPi2}\\
  \Leftrightarrow&\quad\|\Lambda_n u_n-\widetilde u\|_{\LtwoX}\limn 0.\label{equivLa2}
 \end{align}
\end{subequations}
Finally, if $u:\frac{1}{n}\N_0\cap[0,T]\to [0,\infty)$, then
\begin{align}
 \|\Lambda_n u\|_{L^{\infty}((0,T))}=\max\left\{\left\|u\left(\frac{k}{n}\right)\right\|_X:\ k\in\{0,1,\dots,nT\}\right\}.\label{LaLInf}
\end{align}
\end{Lemma}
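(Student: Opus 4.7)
The plan is to prove each identity on the dyadic subintervals $I_k:=\left((k-1)/n, k/n\right)$, where the interpolations have simple explicit forms, and then to combine these pointwise computations with density and translation arguments in Bochner spaces. First, I would dispatch the norm formulas \cref{uPinorm,oPinorm}: since $\uPi_n u$ and $\oPi_n u$ are piecewise constant on each $I_k$, the $L^p$-integrals reduce directly to the claimed finite sums. The bound \cref{LaPinorm} follows from convexity of $\|\cdot\|_X^p$ applied to the convex combination representation \cref{convComb} of $\Lambda_n u$ on $I_k$; integrating and rearranging produces the boundary term $\tfrac{1}{2n}\|u(0)\|_X^p$. The identities \cref{Lanorm} come from a direct computation: on $I_k$, the differences $\Lambda_n u - \uPi_n u$ and $\oPi_n u - \Lambda_n u$ are scalar multiples (by $n(t-(k-1)/n)$ and $n(k/n - t)$ respectively) of $u(k/n) - u((k-1)/n)$, and $\int_{I_k}[n(t-(k-1)/n)]^p\,dt = \frac{1}{(p+1)n}$ yields the constant $C_p$. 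The max formula \cref{LaLInf} is immediate since a convex combination of nonnegative scalars lies between them.

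For the $P_n$-related claims, \cref{PiP_} is Jensen's inequality applied on each $I_k$ to the convex function $\|\cdot\|_X^p$, then summed. The convergence \cref{convPn} I would prove in the standard Lebesgue-differentiation style: for $\T=\oPi$ and $\widetilde u\in C([0,T];X)$ uniform continuity delivers uniform convergence of $\oPi_n P_n\widetilde u$ directly; the general $L^p$ case follows from density of continuous functions in $\LpX$ combined with the uniform bound \cref{PiP_}. The case $\T=\uPi$ is symmetric, and $\T=\Lambda$ reduces to the previous cases via \cref{Lanorm}. The Hilbert-space bound \cref{Lanorm2} is obtained by expanding on each $I_k$,
\begin{align*}
\int_{I_k}\|\Lambda_n u(t)\|^2\,dt = \frac{1}{3n}\bigl(\|u(\tfrac{k-1}{n})\|^2 + \langle u(\tfrac{k-1}{n}),u(\tfrac{k}{n})\rangle + \|u(\tfrac{k}{n})\|^2\bigr) \geq \frac{1}{6n}\bigl(\|u(\tfrac{k-1}{n})\|^2 + \|u(\tfrac{k}{n})\|^2\bigr),
\end{align*}
where the second inequality uses $2|\langle a,b\rangle|\leq \|a\|^2+\|b\|^2$, and then summing over $k$.

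The equivalences \cref{equivp,equiv2} are the technical heart. For \cref{equivp} I would use a shift argument: for $t\in(0, T-1/n)$ one has $\oPi_n u_n(t) = \uPi_n u_n(t+1/n)$, so the difference between the two interpolations in $\LpX$ reduces, modulo a translation, to two boundary contributions of size $\frac{1}{n^{1/p}}\|u_n(0)\|_X$ on $I_1$ and $\frac{1}{n^{1/p}}\|u_n(T)\|_X$ on $I_{nT}$. Combined with continuity of translation in $\LpX$ (i.e., $\|\tau_h \widetilde u - \widetilde u\|_{\LpX}\to 0$ as $h\to 0$), this yields \cref{convoPi}; the implication \cref{convLuoPi} is then immediate from \cref{Lanorm}. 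For the Hilbert-space equivalence \cref{equivLa2} I would instead use the orthogonal projection $\pi_n$ of $\LtwoX$ onto the space of functions piecewise constant on $\{I_k\}$: a direct computation gives $\pi_n \Lambda_n u_n = \tfrac{1}{2}(\oPi_n u_n + \uPi_n u_n)$ and $\|\Lambda_n u_n - \pi_n \Lambda_n u_n\|_{\LtwoX}^2 = \tfrac{1}{12}\|\oPi_n u_n - \uPi_n u_n\|_{\LtwoX}^2$. When $\Lambda_n u_n \to \widetilde u$ strongly in $\LtwoX$, orthogonality forces both $\pi_n \Lambda_n u_n \to \widetilde u$ and $\|\Lambda_n u_n - \pi_n \Lambda_n u_n\|_{\LtwoX}\to 0$; these together give $\oPi_n u_n, \uPi_n u_n \to \widetilde u$, and the boundary condition $\frac{1}{n}\|u_n(0)\|_X^2 \to 0$ is recovered from the local estimate $\|\Lambda_n u_n\|_{L^2(I_1;X)}^2 \geq \frac{1}{6n}\|u_n(0)\|_X^2$ (a special case of \cref{Lanorm2}) together with $\|\Lambda_n u_n\|_{L^2(I_1;X)}\to 0$. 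The main obstacle I anticipate is the boundary-term bookkeeping across these equivalences, and keeping consistent which of $\uPi_n, \oPi_n, \Lambda_n$ each boundary scalar is attached to.
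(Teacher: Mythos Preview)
Your proposal is correct and, for the norm identities \cref{uPinorm}--\cref{LaLInf}, the Steklov bound \cref{PiP_}, the approximation \cref{convPn}, and the Hilbert-space inequality \cref{Lanorm2}, it follows essentially the same computations as the paper (convexity on each $I_k$, density of continuous functions, expansion of the inner product with Cauchy--Schwarz).

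The genuine difference is in how you handle the equivalences \cref{equivp} and \cref{equiv2}. The paper first reduces everything to the case $\widetilde u=0$: it observes that $\|\T_n u_n-\widetilde u\|\to0$ is equivalent to $\|\T_n(u_n-P_n\widetilde u)\|\to0$ by \cref{convPn}, and that $n^{-1/p}\|P_n\widetilde u(t)\|_X\to0$ for $t\in\{0,T\}$, so it may assume $\widetilde u=0$. With $\widetilde u=0$, \cref{convoPi} is immediate from the sum formulas \cref{uPinorm}--\cref{oPinorm} (the two sums differ by exactly the endpoint terms), and the reverse implication \cref{equivLa2} is a one-line consequence of \cref{Lanorm2}. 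Your arguments instead keep $\widetilde u$ general: for \cref{convoPi} you use the shift identity $\oPi_nu_n(t)=\uPi_nu_n(t+1/n)$ together with continuity of translation in $L^p$, and for the reverse direction of \cref{equivLa2} you introduce the orthogonal projection $\pi_n$ onto piecewise constants, identify $\pi_n\Lambda_nu_n=\tfrac12(\oPi_nu_n+\uPi_nu_n)$, and use $\pi_n\widetilde u\to\widetilde u$ (implicit in your ``orthogonality forces'' step). Both routes are valid; the paper's reduction to $\widetilde u=0$ is shorter and avoids invoking translation continuity or properties of $\pi_n$, while your argument is more explicit about the underlying mechanism and gives the variance identity $\|\Lambda_nu_n-\pi_n\Lambda_nu_n\|^2=\tfrac{1}{12}\|\oPi_nu_n-\uPi_nu_n\|^2$ as a by-product.
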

\begin{proof}
\begin{proofpart}[\cref{estPiLa,PiP_,Lanorm2,LaLInf}]
Identities \cref{uPinorm,oPinorm,LaLInf} are evident. 

Let $p\in[1,\infty)$. We compute using the convexity of $s\mapsto s^p$ and \cref{oPinorm} that 
\begin{align}
 \int_0^T\|\Lambda_n u\|_X^p\,dt=&\sum_{k=1}^{nT}\int_{\frac{k-1}{n}}^{\frac{k}{n}}\left\|\frac{\frac{k}{n}-t}{\frac{1}{n}}u\left(\frac{k-1}{n}\right)+\frac{t-\frac{k-1}{n}}{\frac{1}{n}}u\left(\frac{k}{n}\right)\right\|_X^p\,dt\nonumber\\
 \leq&\sum_{k=1}^{nT}\int_{\frac{k-1}{n}}^{\frac{k}{n}}\left(\frac{\frac{k}{n}-t}{\frac{1}{n}}\left\|u\left(\frac{k-1}{n}\right)\right\|_X+\frac{t-\frac{k-1}{n}}{\frac{1}{n}}\left\|u\left(\frac{k}{n}\right)\right\|_X\right)^p\,dt\nonumber\\
 \leq&\sum_{k=1}^{nT}\int_{\frac{k-1}{n}}^{\frac{k}{n}}\frac{\frac{k}{n}-t}{\frac{1}{n}}\left\|u\left(\frac{k-1}{n}\right)\right\|_X^p+\frac{t-\frac{k-1}{n}}{\frac{1}{n}}\left\|u\left(\frac{k}{n}\right)\right\|_X^p\,dt\nonumber\\
 =&\frac{1}{2n}\sum_{k=1}^{nT}\left(\left\|u\left(\frac{k-1}{n}\right)\right\|_X^p+\left\|u\left(\frac{k}{n}\right)\right\|_X^p\right)\nonumber\\
 =&\frac{1}{n}\sum_{k=1}^{nT}\left\|u\left(\frac{k}{n}\right)\right\|_X^p+\frac{1}{2n}\|u_n(0)\|_X^p-\frac{1}{2n}\|u_n(T)\|_X^p\nonumber\\
 \leq&\|\oPi_n u\|_{\LpX}^p+\frac{1}{2n}\|u_n(0)\|_X^p,\nonumber 
\end{align}
yielding \cref{LaPinorm}. Identity \cref{Lanorm} follows with 
 \begin{align}
 \int_0^T\|\uPi_n u-\Lambda_n u\|_X^p\,dt  =&\sum_{k=1}^{nT}\int_{\frac{k-1}{n}}^{\frac{k}{n}}\left\|-\frac{u\left(\frac{k}{n}\right)-u\left(\frac{k-1}{n}\right)}{\frac{1}{n}}\left(t-\frac{k}{n}\right)\right\|_X^p\,dt\nonumber\\
  =&\frac{1}{p+1}\frac{1}{n}\sum_{k=1}^{nT}\left\|u\left(\frac{k}{n}\right)-u\left(\frac{k-1}{n}\right)\right\|_X^p\nonumber\\
  =&\frac{1}{p+1}\int_0^T\|\oPi_nu-\uPi_n u\|_X^p\,dt\nonumber
 \end{align}
 and a similar computation for $\Lambda_nu-\uPi_nu$. Passing to the limit in \cref{LaPinorm,Lanorm} as $p\to\infty$, we conclude that both of them hold for $p=\infty$ as well.
 
 Let $p\in[1,\infty)$. Estimating the norm of the arising Bochner's integral and then using H\"older's inequality, we obtain
 \begin{align}
  \frac{1}{n}\sum_{k=1}^{nT}\left\|P_n\widetilde u\left(\frac{k}{n}\right)\right\|_X^p=&n^{p-1}\sum_{k=1}^{nT}\left\|\int_{\frac{k-1}{n}}^{\frac{k}{n}}\widetilde u(t)\,dt\right\|_X^p\nonumber\\
  \leq&\sum_{k=1}^{nT}\int_{\frac{k-1}{n}}^{\frac{k}{n}}\|\widetilde u(t)\|_X^p\,dt\nonumber\\
  =&\int_0^T\|\widetilde u(t)\|_X^p\,dt,\nonumber
 \end{align}
 yielding \cref{PiP_} for finite $p$. Passing to the limit in  \cref{PiP_} as $p\to\infty$, we confirm   that the inequality holds for $p=\infty$ as well.
 
 For a  Hilbert space $(X,(\cdot,\cdot)_X)$, we compute using the Cauchy-Schwartz inequality and  \cref{uPinorm,oPinorm} 
 \begin{align}
  \sum_{k=1}^{nT}\int_{\frac{k-1}{n}}^{\frac{k}{n}}\|\Lambda_n u\|_X^2\,dt
  =&\sum_{k=1}^{nT}\int_{\frac{k-1}{n}}^{\frac{k}{n}}\left\|\frac{\frac{k}{n}-t}{\frac{1}{n}}u\left(\frac{k-1}{n}\right)+\frac{t-\frac{k-1}{n}}{\frac{1}{n}}u\left(\frac{k}{n}\right)\right\|_X^2\,dt\nonumber\\
  =&n^2\sum_{k=1}^{nT}\int_{\frac{k-1}{n}}^{\frac{k}{n}}\left(\frac{k}{n}-t\right)^2\left\|u\left(\frac{k-1}{n}\right)\right\|_X^2+\left(t-\frac{k-1}{n}\right)^2\left\|u\left(\frac{k}{n}\right)\right\|_X^2\nonumber\\
  &\phantom{n^2\sum_{k=1}^{nT}\int_{\frac{k-1}{n}}^{\frac{k}{n}}}+2\left(\frac{k}{n}-t\right)\left(t-\frac{k-1}{n}\right)\left(u\left(\frac{k-1}{n}\right),u\left(\frac{k}{n}\right)\right)_X\,dt\nonumber\\
  =&\frac{1}{3n}\sum_{k=1}^{nT}\left(\left\|u\left(\frac{k-1}{n}\right)\right\|_X^2+\left\|u\left(\frac{k}{n}\right)\right\|_X^2+\left(u\left(\frac{k-1}{n}\right),u\left(\frac{k}{n}\right)\right)_X\right)\nonumber\\
  \geq&\frac{1}{6n}\sum_{k=1}^{nT}\left(\left\|u\left(\frac{k-1}{n}\right)\right\|_X^2+\left\|u\left(\frac{k}{n}\right)\right\|_X^2\right)\nonumber\\
  =&\frac{1}{6}\left(\|\uPi_n u\|_{\LtwoX}^2+\|\oPi_n u\|_{\LtwoX}^2\right),\nonumber
 \end{align}
so \cref{Lanorm2} follows. 
\end{proofpart}
\begin{proofpart}[\cref{convPn}]
Let us first assume $$p=\infty\text{ and }\widetilde u\in C([0,T];X).$$ In this case, estimating the norm of the arising Bochner's integral, we find that  
\begin{align}
&\max\left\{\underset{t\in\left(\frac{k-1}{n},\frac{k}{n}\right)}{\sup}\left\|P_n\widetilde u\left(\frac{k}{n}\right)-\widetilde u(t)\right\|_X:\ k\in\{1,2,\dots, nT\}\right\}\nonumber\\
=&\max\left\{n\underset{t\in\left(\frac{k-1}{n},\frac{k}{n}\right)}{\sup}\left\|\int_{\frac{k-1}{n}}^{\frac{k}{n}}(\widetilde u(s)-\widetilde u(t))\,ds\right\|_X:\ k\in\{1,2,\dots, nT\}\right\}\nonumber\\
 \leq&\max\left\{n\underset{t\in\left(\frac{k-1}{n},\frac{k}{n}\right)}{\sup}\int_{\frac{k-1}{n}}^{\frac{k}{n}}\|\widetilde u(s)-\widetilde u(t)\|_X\,ds:\ k\in\{1,2,\dots, nT\}\right\}\nonumber\\
 \leq&\max\left\{ \|\widetilde u(s)-\widetilde u(t)\|_X:\ s,t\in[0,T]\text{ and }|t-s|\leq \frac{1}{n}\right\}\nonumber\\
 & \limn0,\nonumber
\end{align}
proving \cref{convPn} for $\T=\oPi$.
A similar calculation yields \cref{convPn} for $\T=\uPi$. 

Now let $$p\in[1,\infty)\text{ and }\widetilde u\in C([0,T];X).$$ 
Since $[0,T]$ is finite,  \cref{convPn} for $\T\in\{\uPi,\oPi\}$ and $p\in[1,\infty)$ directly follows from the corresponding convergences for $p=\infty$.  

We turn to the general case of $$p\in[1,\infty)\text{ and }\widetilde u\in \LpX.$$ Consider a sequence $(\widetilde u_m)\subset C([0,T];X)$ such that 
\begin{align*}
 \widetilde u_m\underset{m\to\infty}{\to} \widetilde u\qquad\text{in }\LpX.
\end{align*}
 Combining \cref{oPinorm} and \cref{PiP_} with \cref{convPn} for (continuous) $\widetilde u_m$ and $\T=\oPi$, we obtain
\begin{align}
 &\underset{n\to\infty}{\lim\sup}\|\oPi_nP_n\widetilde u-\widetilde u\|_{\LpX}\nonumber\\
 \leq& \underset{m\to\infty}{\lim\sup}\,\underset{n\to\infty}{\lim\sup}\left(\|\oPi_nP_n\widetilde u_m-\widetilde u_m\|_{\LpX}+\|\widetilde u_m-\widetilde u\|_{\LpX}+\|\oPi_n P_n(\widetilde u_m-\widetilde u)\|_{\LpX}\right)\nonumber\\
 \leq& \underset{m\to\infty}{\lim\sup}\,\underset{n\to\infty}{\lim\sup}\left(\|\oPi_nP_n\widetilde u_m-\widetilde u_m\|_{\LpX}+2\|\widetilde u_m-\widetilde u\|_{\LpX}\right)\nonumber\\
 =&0.\nonumber
\end{align}
A proof for $\T=\uPi$ is analogous. Since $\Lambda_n$ is a convex combination of $\uPi_n$ and $\oPi_n$ on each interval $((k-1)/n,k/n)$ by \cref{convComb}, 
\cref{convPn} holds also for $\T=\Lambda$.

\end{proofpart}
\begin{proofpart}[\cref{equivp,equiv2}]
Thanks to \cref{convPn}, we have
\begin{align}
 \|\T_n u_n-\widetilde u\|_{\LpX}\limn 0\quad  \Leftrightarrow\quad\|\T_n (u_n-P_n\widetilde u)\|_{\LpX}\limn 0\qquad\text{for }\T\in\{\uPi,\oPi,\Lambda\}.\label{equiv11}
\end{align}
Further, estimating the norm of the arising Bochner's integral and using H\"older's inequality, we find that 
\begin{align}
 \frac{1}{n^{\frac{1}{p}}}\|P_n\widetilde u(t)\|_X=&n^{1-\frac{1}{p}}\left\|\int_{t-\frac{1}{n}}^t\widetilde u(t)\,dt\right\|_X\nonumber\\
 \leq&n^{1-\frac{1}{p}}\int_{t-\frac{1}{n}}^t\|\widetilde u(t)\|_X\,dt
 \nonumber\\
 \leq&\int_{t-\frac{1}{n}}^t\|\widetilde u(t)\|_X^p\,dt\nonumber\\
 \limn &0\qquad\text{for }t\in\{1,T\}.\label{equivT}
\end{align}
Due to \cref{equivT,equiv11}, it suffices to prove \cref{equivp,equiv2} for $\widetilde u=0$, i.e. that 
\begin{subequations}
% \label{0equivp}
 \begin{align}
  &\quad\|\uPi_n u_n\|_{\LpX}\limn 0\quad\text{and}\quad\frac{1}{n^{\frac{1}{p}}}\|u_n(T)\|_X\limn0\nonumber\\
  \Leftrightarrow&\quad\|\oPi_n u_n\|_{\LpX}\limn 0\quad\text{and}\quad\frac{1}{n^{\frac{1}{p}}}\|u_n(0)\|_X\limn0\label{0convoPi}\\
  \Rightarrow&\quad\|\Lambda_n u_n\|_{\LpX}\limn 0,\label{0convLuoPi}
 \end{align}
\end{subequations}
and, for $p=2$ and $X$ a Hilbert space, also the reverse implication 
\begin{align}
  &\quad\|\Lambda_n u_n\|_{\LpX}\limn 0\nonumber\\
  \Rightarrow&\quad\|\oPi_n u_n\|_{\LpX}\limn 0\quad\text{and}\quad\frac{1}{n^{\frac{1}{p}}}\|u_n(0)\|_X\limn0\label{0convoPirev}
\end{align}
 The equivalence in \cref{0convoPi} is a direct consequence of \cref{uPinorm,oPinorm}. Combining \cref{0convoPi,Lanorm}, we obtain implication \cref{0convLuoPi}.   
Finally, implication  \cref{0convoPirev} directly follows with \cref{uPinorm,oPinorm,Lanorm2}.
 \end{proofpart}
 
\end{proof}
\begin{Remark}
\cref{LemComparison} is motivated by \cite[Chapter 11 Lemma 11.4]{SchweizerPDEs}. There, the implication 
\begin{align}
 \|\Lambda_n u_n-\widetilde u\|_{\LtwoX}\to 0\qquad\Rightarrow \qquad\|\oPi_n u_n-\widetilde u\|_{\LtwoX}\to 0\nonumber
\end{align}
was proved with the help of an estimate similar to \cref{Lanorm2}.
\end{Remark}

\begin{Lemma}\label{LemIneq}
 Let $n\in\N$, $T\in \frac{1}{n}\N$, $u:\frac{1}{n}\N_0\cap[0,T]\to \R$, and $g:\R\rightarrow\R$ be an increasing function with a primitive $G$.  Then
 \begin{align}
  \partial_t\Lambda_nug\left(\oPi_n{u}\right)\geq&\partial_t \Lambda_n{G(u)}\qquad\text{in } [0,T]\backslash\left(\frac{1}{n}\N_0\right).\label{propg}
 \end{align}
\end{Lemma}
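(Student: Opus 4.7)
The plan is to localise to a single subinterval of the partition and reduce the claim to a one‑variable inequality for $g$ and its primitive $G$.

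First I would fix an arbitrary $k\in\{1,\dots,nT\}$ and work on $I_k:=\left(\frac{k-1}{n},\frac{k}{n}\right)$. By the definition of $\Lambda_n$, the function $\Lambda_n u$ is affine on $I_k$ with
\[
\partial_t\Lambda_n u=n\left(u\!\left(\tfrac{k}{n}\right)-u\!\left(\tfrac{k-1}{n}\right)\right)\qquad\text{on }I_k,
\]
and the same applies to $\Lambda_n G(u)$ with $u$ replaced by $G(u)$. Moreover, $\oPi_n u\equiv u(k/n)$ on $I_k$ by the definition of $\oPi_n$. Therefore, setting $a:=u\!\left(\tfrac{k-1}{n}\right)$ and $b:=u\!\left(\tfrac{k}{n}\right)$, the claimed inequality \cref{propg} on $I_k$ reduces to the pointwise estimate
\[
(b-a)\,g(b)\;\geq\;G(b)-G(a)\;=\;\int_a^b g(s)\,ds.
\]

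Next I would verify this inequality using the monotonicity of $g$. If $a\leq b$, then $g(s)\leq g(b)$ for all $s\in[a,b]$, so $\int_a^b g(s)\,ds\leq g(b)(b-a)$. If $a>b$, then $g(s)\geq g(b)$ for $s\in[b,a]$, whence $\int_b^a g(s)\,ds\geq g(b)(a-b)$, i.e. $\int_a^b g(s)\,ds\leq g(b)(b-a)$. In both cases the required inequality holds, and since $k$ was arbitrary, combining the subintervals yields \cref{propg} on $[0,T]\setminus\frac{1}{n}\N_0$.

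There is no real obstacle here: the only subtle point is recognising that $\oPi_n u$ takes the right‑endpoint value $u(k/n)$ on $I_k$, which is precisely what makes the one‑sided secant bound $(b-a)g(b)\geq\int_a^b g(s)\,ds$ (valid without any sign assumption on $b-a$, owing to the monotonicity of $g$) the correct inequality to invoke. Had $\oPi_n$ been replaced by $\uPi_n$, the analogous statement would fail in general.
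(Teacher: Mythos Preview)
Your proof is correct and follows essentially the same approach as the paper: localise to a subinterval $I_k$, compute the constant slopes of $\Lambda_n u$ and $\Lambda_n G(u)$, identify $\oPi_n u$ with the right endpoint value, and reduce to the elementary inequality $(b-a)g(b)\geq G(b)-G(a)$, which the paper records separately as \cref{ineqG} and which you verify directly from monotonicity.
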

The proof is a consequence of the following elementary inequality: under the assumptions on $g$ from the Lemma it holds that 
 \begin{align}
  (b-a)g(b)\geq G(b)-G(a)\qquad\text{for all }a,b\in\R. \label{ineqG}
 \end{align} 
\begin{proof}[{\it {\bf Proof of} \cref{LemIneq}.}]
 Thanks to \cref{ineqG}
 it holds that
 \begin{align}
  \partial_t\Lambda_nug\left(\oPi_n{u}\right)=&\frac{u\left(\frac{k}{n}\right)-u\left(\frac{k-1}{n}\right)}{\frac{1}{n}}g\left(u\left(\frac{k}{n}\right)\right)\nonumber\\
  \geq&\frac{G\left(u\left(\frac{k}{n}\right)\right)-G\left(u\left(\frac{k-1}{n}\right)\right)}{\frac{1}{n}}\nonumber\\
  =&\partial_t \Lambda_n{G(u)}\qquad\text{in } \left(\frac{k-1}{n},\frac{k}{n}\right)\text{ for }k\in \{1,2,\dots,nT\}.\nonumber
 \end{align}
\end{proof}
We conclude this Section with a version of the discrete Gronwall lemma.
\begin{Lemma}\label{LemDiscGr}
 Let $n\in\N$, $T\in \frac{1}{n}\N$, and $C\in(-\infty,n)$. Define
 \begin{align}
 &\exp_{n,C}:\frac{1}{n}\N_0\to (0,\infty),\qquad \exp_{n,C}(t):=\left(\left(1-\frac{C}{n}\right)^{-n}\right)^t.\nonumber
\end{align}
 Let $u,v:\frac{1}{n}\N_0\cap[0,T]\to [0,\infty)$.  Then
 \begin{align}
  \partial_t\Lambda_n u\leq C \oPi_n u+\oPi_n v\qquad\text{in } [0,T]\backslash\left(\frac{1}{n}\N_0\right)\label{ineqGro}
 \end{align}
implies 
\begin{align}
 \|\oPi_n \exp_{n,C}^{-1}u\|_{L^{\infty}(0,T)}\leq u(0)+\|\uPi_n\exp_{n,C}^{-1}\oPi_nv\|_{L^1((0,T))}.\label{DiscGrin}
\end{align}

\end{Lemma}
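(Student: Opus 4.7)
The plan is to unpack the interpolation definitions, reduce the distributional inequality \cref{ineqGro} to a pointwise recursion on the grid $\frac{1}{n}\N_0\cap[0,T]$, and then iterate.

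First I would translate the hypothesis. On each interval $\left(\frac{k-1}{n},\frac{k}{n}\right)$, one has $\partial_t\Lambda_n u\equiv n\bigl(u(k/n)-u((k-1)/n)\bigr)$ while $\oPi_n u$ and $\oPi_n v$ are the constants $u(k/n)$ and $v(k/n)$. So \cref{ineqGro} is equivalent to
\[
u(k/n)-u((k-1)/n)\leq \tfrac{C}{n}\,u(k/n)+\tfrac{1}{n}\,v(k/n),\qquad k=1,\dots,nT.
\]
Because $C<n$, the factor $1-C/n$ is strictly positive, and nonnegativity of $u$ is preserved; we may rearrange to
\[
u(k/n)\leq \left(1-\tfrac{C}{n}\right)^{-1}u((k-1)/n)+\left(1-\tfrac{C}{n}\right)^{-1}\tfrac{v(k/n)}{n}.
\]

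Next I would perform a straightforward induction on $k$, obtaining
\[
u(k/n)\leq \left(1-\tfrac{C}{n}\right)^{-k}u(0)+\sum_{j=1}^{k}\left(1-\tfrac{C}{n}\right)^{-(k-j+1)}\tfrac{v(j/n)}{n}.
\]
By the very definition of $\exp_{n,C}$ one has $\exp_{n,C}(k/n)=(1-C/n)^{-k}$, so dividing through by this positive quantity gives
\[
\exp_{n,C}^{-1}(k/n)\,u(k/n)\leq u(0)+\sum_{j=1}^{k}\exp_{n,C}^{-1}((j-1)/n)\,\tfrac{v(j/n)}{n}.
\]

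Finally I would identify both sides of this grid inequality with the continuous-time norms in \cref{DiscGrin}. By definition of $\oPi_n$, the left-hand side is the constant value of $\oPi_n(\exp_{n,C}^{-1}u)$ on $\left(\frac{k-1}{n},\frac{k}{n}\right)$; taking the maximum over $k\in\{1,\dots,nT\}$ produces $\|\oPi_n\exp_{n,C}^{-1}u\|_{L^\infty(0,T)}$. For the right-hand sum, on $\left(\frac{j-1}{n},\frac{j}{n}\right)$ we have $\uPi_n\exp_{n,C}^{-1}\equiv\exp_{n,C}^{-1}((j-1)/n)$ and $\oPi_n v\equiv v(j/n)$, so $\frac{1}{n}\exp_{n,C}^{-1}((j-1)/n)\,v(j/n)$ is exactly the integral of $\uPi_n\exp_{n,C}^{-1}\cdot\oPi_n v$ over that subinterval. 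Since all summands are nonnegative, enlarging the partial sum from $k$ up to $nT$ and then taking the maximum over $k$ delivers \cref{DiscGrin}.

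There is no substantive obstacle here; the only care required is the sign check $1-C/n>0$ (ensured by the hypothesis $C<n$), so that rearrangement is admissible, and keeping track of the index shift $k-j+1$ versus $j-1$ when converting between $\oPi_n$ and $\uPi_n$ on the right-hand side.
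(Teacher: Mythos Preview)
Your proof is correct and follows essentially the same approach as the paper: unpack the interpolations to obtain the implicit recursion $u(k/n)-u((k-1)/n)\leq \tfrac{C}{n}u(k/n)+\tfrac{1}{n}v(k/n)$, iterate using $1-C/n>0$, divide by $\exp_{n,C}(k/n)$, and reinterpret both sides through $\oPi_n$ and $\uPi_n$. The only cosmetic difference is that the paper invokes a cited discrete Gronwall lemma \cite[Proposition~3.1]{Emmrich} in place of your explicit induction, arriving at the identical bound $\exp_{n,C}^{-1}(k/n)\,u(k/n)\leq u(0)+\tfrac{1}{n}\sum_{j=0}^{k-1}\exp_{n,C}^{-1}(j/n)\,v((j{+}1)/n)$.
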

\begin{proof}
Set 
\begin{align*}
a_n:=u\left(\frac{k}{n}\right),\quad g_n:=v\left(\frac{k}{n}\right),\quad \Delta t:=\frac{1}{n}.                                                                                                                                      \end{align*}
In this notation, inequality \cref{ineqGro} can be rewritten as
\begin{align}
 \frac{a_{n+1}-a_n}{\frac{1}{n}}\leq C a_{n+1}+g_{n+1}\qquad\text{for }n\in\{0,\cdots,nT-1\}.\nonumber
\end{align}
Since $1-C/n>0$, a discrete version of Gronwall's lemma   \cite[Proposition 3.1]{Emmrich} can be applied in this setting, followed by division throughout by $\exp_{n,C}(k/n)$. The result is 
\begin{align}
 \exp_{n,C}^{-1}u\left(\frac{k}{n}\right)\leq&u(0)+\frac{1}{n}\sum_{j=0}^{k-1}\exp_{n,C}^{-1}\left(\frac{j}{n}\right)v\left(\frac{j+1}{n}\right)\qquad\text{for }k\in\{1,\dots,nT\}.\label{DiscGrin_1}
\end{align}
Since $v$ is non-negative,  \cref{DiscGrin_1} implies 
\begin{align}
 \exp_{n,C}^{-1}u\left(\frac{k}{n}\right)\leq&u(0)+\frac{1}{n}\sum_{j=0}^{nT-1}\exp_{n,C}^{-1}\left(\frac{j}{n}\right)v\left(\frac{j+1}{n}\right)\qquad\text{for }k\in\{1,\dots,nT\}.\label{DiscGrin_}
\end{align}
Re-writing \cref{DiscGrin_} in terms of continuous-time interpolations gives \cref{DiscGrin}.

\end{proof}

\section{Auxiliary results about functions and measures with values in Banach spaces}\label{AppVM}
The two results in this Section help dealing with functions and vector measures that take values in Banach spaces. Our first Lemma is about representation of such measures by weakly-$*$ measurable functions. It relies on some known facts related to vector measures and  presents them in the form that is useful for our analysis in \cref{Secsolc}. 
\begin{Lemma}\label{LemRep}
Let $I$ be a finite  interval and $X$ be a Banach space.
\begin{enumerate}
[label=(\arabic*),ref=(\arabic*)]
\item 
Each $f\in L^1_{w-*}(I;X^*)$  generates a vector measure  $\mu_f\in {\cal M}(I;X^*)$ that is given by the Gelfand integral \cite[Chapter II \S3 Definition 2]{DiestelUhl} of $f$, i.e.
 \begin{align}
  \left<\mu_f(E),\varphi\right>:=\int_E\left<f,\varphi\right>dt\qquad\text{for all }\varphi\in X\text{ and Borel }E\subset I\label{Defmu}
 \end{align}
 and satisfies
 \begin{align}
  \|\mu_f(E)\|_{X^*}\leq \int_E\|f\|_{X^*}\,dt\qquad\text{for all Borel }E\subset I.\label{normmuE}
 \end{align}        
 \item\label{LemRep2}  Let $\mu \in {\cal M}(I;X^*)$ and assume that there exists $0\leq g\in L^1(I)$ such that
 \begin{align}
  \|\mu(E)\|_{X^*}\leq \int_Eg\,dt\qquad\text{for all Borel }E\subset I.\label{majg}
 \end{align} 
 Then there exists some $ \zeta\in L^{\infty}_{w-*}(I;X^*)$ such that 
 \begin{align}
&\|\zeta\|_{L^{\infty}_{w-*}(I;X^*)}\leq1\nonumber               \end{align}
and
\begin{align}
   \mu=\mu_{\zeta g}.\nonumber
\end{align}
If $X$ is separable, then any $\zeta_1$ and $\zeta_2$ that satisfy the conditions on $\zeta$ coincide a.e. in $\{g>0\}$.

 \end{enumerate}

\end{Lemma}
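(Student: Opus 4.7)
For part (1), the Gelfand integral on the right of \eqref{Defmu} is well-defined for each $\varphi\in X$ because the scalar function $t\mapsto\left<f(t),\varphi\right>$ is Lebesgue measurable (by weak-$*$ measurability of $f$) and dominated by $\|f\|_{X^*}\|\varphi\|_X\in L^1(I)$. The assignment $\varphi\mapsto\int_E\left<f,\varphi\right>dt$ is therefore a linear functional on $X$ of norm at most $\int_E\|f\|_{X^*}\,dt$, which gives both the existence of $\mu_f(E)\in X^*$ and the bound \eqref{normmuE}. Countable additivity in the weak-$*$ topology follows from dominated convergence applied to $t\mapsto\left<f(t),\varphi\right>$ on disjoint unions, and the Orlicz--Pettis theorem upgrades this to strong countable additivity in $X^*$. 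Regularity of $\mu_f$ reduces via each $\left<\mu_f,\varphi\right>$ to regularity of its absolutely continuous scalar densities, and bounded variation $|\mu_f|(I)\leq\int_I\|f\|_{X^*}\,dt$ follows by summing \eqref{normmuE} over finite Borel partitions.

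For part (2), the plan is to obtain the density $\zeta$ by Hahn--Banach/duality. Define $T$ on the space of $X$-valued simple Borel functions by
\begin{align*}
T\Big(\sum_i\chi_{E_i}\varphi_i\Big):=\sum_i\left<\mu(E_i),\varphi_i\right>,
\end{align*}
taking the $E_i$ disjoint. Hypothesis \eqref{majg} yields $|T(\psi)|\leq\sum_i\|\varphi_i\|_X\|\mu(E_i)\|_{X^*}\leq\int_I g(t)\|\psi(t)\|_X\,dt=\|\psi\|_{L^1(I,g\,dt;X)}$. Since simple functions are dense in the weighted Bochner space $L^1(I,g\,dt;X)$, $T$ extends uniquely to a bounded linear functional of norm at most $1$ there. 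Invoking the classical duality $\bigl(L^1(I,g\,dt;X)\bigr)^*\cong L^{\infty}_{w-*}(I,g\,dt;X^*)$ (see, e.g. \cite{DiestelUhl,Dinculeanu}, which holds for any Banach space $X$ on a $\sigma$-finite measure), produces $\eta\in L^{\infty}_{w-*}(I,g\,dt;X^*)$ with $\|\eta\|_{\infty}\leq1$ and $T(\psi)=\int_I\left<\eta,\psi\right>g\,dt$. Extending $\eta$ by $0$ on $\{g=0\}$ gives $\zeta\in L^{\infty}_{w-*}(I;X^*)$ with $\|\zeta\|_{\infty}\leq1$; then $\|\zeta g\|_{X^*}\leq g\in L^1(I)$, so $\zeta g\in L^1_{w-*}(I;X^*)$, and testing against $\chi_E\varphi$ gives $\left<\mu(E),\varphi\right>=\int_E\left<\zeta g,\varphi\right>dt$, i.e. $\mu=\mu_{\zeta g}$.

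For uniqueness when $X$ is separable, suppose $\mu_{\zeta_1 g}=\mu_{\zeta_2 g}$. Then for each $\varphi\in X$, the scalar measure $E\mapsto\int_E\left<(\zeta_1-\zeta_2)g,\varphi\right>dt$ vanishes, so $\left<\zeta_1(t)-\zeta_2(t),\varphi\right>g(t)=0$ for a.a.\ $t$. Fix a countable dense subset $\{\varphi_n\}\subset X$ and let $N$ be the union of the associated null sets; on $\{g>0\}\setminus N$ we have $\left<\zeta_1-\zeta_2,\varphi_n\right>=0$ for all $n$, and by density and continuity of elements of $X^*$ this forces $\zeta_1=\zeta_2$ pointwise in $X^*$ on $\{g>0\}$. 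The mildly delicate point in the whole argument is keeping track of the \emph{weighted} $L^1$ norm in the extension of $T$: bounding $T$ against $\|\psi\|_{L^1(I,g\,dt;X)}$ (rather than against $\|\psi\|_{L^1(I;X)}$) is precisely what delivers a density $\zeta$ with $\|\zeta\|_{\infty}\leq 1$, instead of a merely integrable density.
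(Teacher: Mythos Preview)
Your argument is correct. Part~(1) and the uniqueness clause in part~(2) match the paper almost verbatim; the only cosmetic difference is that you invoke Orlicz--Pettis to pass from weak-$*$ to norm countable additivity, whereas the paper obtains this implicitly from the variation bound \eqref{normmuE} (tail sums are dominated by $\int_{\cup_{k\geq n}E_k}\|f\|_{X^*}\,dt\to 0$).

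For the existence in part~(2) you take a genuinely different route. The paper observes that \eqref{majg} makes $\mu$ absolutely continuous with respect to $g\,dt$ and then quotes a vector Radon--Nikodym theorem from Dinculeanu (\cite[Chapter~II \S13~4, Theorem~5]{Dinculeanu}) to produce a weak-$*$ density $\zeta$ together with the identity $\int_E\|\zeta\|_{X^*}\,g\,dt=|\mu|(E)$; comparing this with \eqref{majg} gives $\|\zeta\|_{X^*}\leq 1$ a.e. You instead build the functional $T$ on simple functions, bound it in the \emph{weighted} Bochner space $L^1(I,g\,dt;X)$, and invoke the duality $(L^1(I,g\,dt;X))^*\cong L^{\infty}_{w-*}(I,g\,dt;X^*)$. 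The two approaches are essentially equivalent---the cited Dinculeanu theorem is exactly what underlies that duality---but yours has the advantage of making transparent why the bound $\|\zeta\|_{\infty}\leq 1$ comes for free (it is the operator norm of $T$), while the paper's version yields the slightly sharper information $\int_E\|\zeta\|_{X^*}\,g\,dt=|\mu|(E)$, which it does not actually need.
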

\begin{proof}
 \begin{enumerate}
 [label=(\arabic*),ref=(\arabic*)]
  \item Since $f\in L^1_{w-*}(I;X^*)$, its Gelfand integral exists. Hence, $\mu_f$ is a well-defined $X^*$-valued set function on the set of Borel subsets of $I$. Estimate \cref{normmuE} is a trivial consequence of \cref{Defmu} and the definition of the $X^*$-norm. Countable additivity directly follows with the corresponding property of the Lebesgue integral of scalar functions, whereas regularity and variation finiteness are direct consequences of the corresponding properties of the scalar measure defined by the right-hand side of the inequality in \cref{normmuE}. 
  \item Thanks to \cref{majg} the given vector measure $\mu$ is continuous with respect to the finite measure $g\,dt$.  By
\cite[Chapter II \S13 4.  Theorem 5]{Dinculeanu} (take there $T:=I$, $E:=X$, $F:=Z:=\R$, $\nu:=g\,dt$, and $m:=\mu$) there exists some $\zeta\in L^1_{w-*,loc}(I;X^*)$ such that $\mu=\mu_{\zeta g}$ and (take $\varphi$ in that Theorem to be the characteristic function of $E$)
 \begin{align}
  \int_E\|\zeta\|_{X^*}\, g\,dt=|\mu|(E)\qquad\text{for all Borel }E\subset I,\label{normeq}
 \end{align}
 where by $|\mu|(E)$ we denote  the variation of $\mu$ over $E$.
Combining \cref{normeq,majg}, we obtain
\begin{align}
  \int_E\|\zeta\|_{X^*}\, g\,dt\leq \int_Eg\,dt\qquad\text{for all Borel }E\subset I,\nonumber
 \end{align}
 and this implies $\|\zeta\|_{X^*}\leq 1$ a.e. in $I$, as required.

 Now let $X$ be separable, ${\cal C}$ being a countable dense subset of $X$. Assume that $\zeta_1$ and $\zeta_2$ satisfy the conditions on $\zeta$. Then \cref{Defmu} for $f_i:=\zeta_i g$ leads to 
 \begin{align}
  \int_E\left<\zeta_1,\varphi\right>gdt=\int_E\left<\zeta_2,\varphi\right>gdt\qquad\text{for all }\varphi\in {\cal C}\text{ and Borel }E\subset I.\nonumber
 \end{align}
Consequently, 
\begin{align}
\left<(\zeta_1-\zeta_2),\varphi\right>g=0\qquad \text{ a.e. in } I \text{ for all }\varphi\in {\cal C}. \nonumber                                                                               \end{align}
 Since ${\cal C}$ is countable, we conclude  that 
 \begin{align}
\left<(\zeta_1-\zeta_2),\varphi\right>g=0\qquad \text{ for all }\varphi\in {\cal C}\text{ a.e. in } I.   \nonumber                                                                          \end{align}
 Finally, utilising the density of  ${\cal C}$ in $X$, we deduce that $\zeta_1-\zeta_2=0$ a.e. in $\{g>0\}$.
 \end{enumerate}
 
\end{proof}

The next Lemma deals specifically with weak measurability of parts of a time-dependent Radon measure.
\begin{Lemma}\label{LemMofParts}
 Let $I$ be a finite interval and $O$ be a bounded domain. Let $f:I\to \RM$ be weakly  measurable. Then: $f^{ac}$ is   $\LOne$-measurable, $f^s$ is weakly $\RM$-measurable, and $|f^s|$ is  weakly-$*$ $\RM$-measurable. 
\end{Lemma}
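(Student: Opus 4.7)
The argument rests on the Lebesgue decomposition realizing $\RM=L^1(\Omega)\oplus \RM_s$ as a \emph{topological} direct sum, where $\RM_s$ denotes the measures singular with respect to Lebesgue. The projections $\mu\mapsto \mu^{ac}$ and $\mu\mapsto \mu^s$ are linear, and bounded of norm one since mutual singularity gives $\|\mu\|_{\RM}=\|\mu^{ac}\|_{L^1}+\|\mu^s\|_{\RM}$. The key point is then that each functional on one of the two summands extends by zero to an element of the full dual $(\RM)^*$, and this is precisely what converts the weak measurability hypothesis on $f$ into the three claimed measurability properties.

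For the first claim, given any $\psi\in L^\infty(\Omega)=(L^1(\Omega))^*$, I would introduce the functional $\tilde\psi\in(\RM)^*$ defined by $\tilde\psi(\mu):=\int_\Omega \psi\,\mu^{ac}\,dx$, for which the bound $|\tilde\psi(\mu)|\le\|\psi\|_{L^\infty}\|\mu^{ac}\|_{L^1}\le\|\psi\|_{L^\infty}\|\mu\|_{\RM}$ is immediate. Applying the weak measurability of $f$ to this specific $\tilde\psi$ shows that $t\mapsto \int\psi\,f^{ac}(t)\,dx = \tilde\psi(f(t))$ is measurable for every $\psi\in L^\infty$, which is exactly weak $L^1$-measurability of $f^{ac}$. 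Since $L^1(\Omega)$ is separable, Pettis's theorem then promotes this to strong (Bochner) $L^1$-measurability.

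Part (2) is structurally identical: for an arbitrary $\Lambda\in(\RM)^*$, the map $\tilde\Lambda(\mu):=\Lambda(\mu^s)$ again lies in $(\RM)^*$ by boundedness of the singular projection, and weak measurability of $f$ forces $t\mapsto\Lambda(f^s(t))=\tilde\Lambda(f(t))$ to be measurable. Since $\Lambda$ was arbitrary, $f^s:I\to\RM$ is weakly $\RM$-measurable, which in particular implies its weak-$*$ measurability, i.e.\ measurability of $t\mapsto\int\varphi\,df^s(t)$ for every $\varphi\in C_0(\overline\Omega)$.

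For the third claim I would invoke the standard variational representation of the total variation,
\[
\int \psi\,d|f^s(t)| = \sup\left\{\int \varphi\,df^s(t) : \varphi\in C_0(\overline\Omega),\ |\varphi|\le \psi\right\},\qquad 0\le\psi\in C_0(\overline\Omega),
\]
and use that $\{\varphi\in C_0(\overline\Omega):|\varphi|\le\psi\}$ is separable in the sup-norm, so the supremum may be replaced by one over a countable subfamily, exhibiting $t\mapsto \int\psi\,d|f^s(t)|$ as a countable supremum of measurable functions; the case of general $\psi\in C_0(\overline\Omega)$ then follows by the splitting $\psi=\psi^+-\psi^-$, both summands still lying in $C_0(\overline\Omega)$. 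I do not anticipate any serious obstacle: the only slightly non-routine point is justifying the above variational identity, but this is standard and amounts to approximating the $|f^s(t)|$-a.e.\ unimodular Radon-Nikodym sign of $df^s(t)/d|f^s(t)|$ by continuous functions of absolute value at most one via Lusin's theorem, after multiplication by $\psi\in C_0(\overline\Omega)$ to ensure the resulting test function remains in $C_0(\overline\Omega)$.
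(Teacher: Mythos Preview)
Your proof is correct and follows essentially the same approach as the paper's. The paper likewise observes that the projections $(\cdot)^{ac}$ and $(\cdot)^s$ are bounded linear (hence preserve weak measurability), invokes Pettis for the $L^1$ part, and handles $|f^s|$ via a variational formula for the total variation together with separability of $C_0(\overline\Omega)$; the paper's formula $\langle|f^s(t)|,\varphi\rangle=\sup\{\langle f^s(t),\varphi\psi\rangle:\psi\in C_0(\overline\Omega),\ \|\psi\|\le 1\}$ is a minor reparametrisation of yours.
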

\begin{proof}
 Since operators $(\cdot)^{ac}:\RM\to \LOne$ and $(\cdot)^s:\RM\to \RM$ are linear and bounded, we conclude that $f^{ac}:I\to \LOne$ and $f^s:I\to \RM$ are weakly  measurable. Moreover,  since $\LOne$ is separable,  Pettis' theorem applies and yields the strong measurability of $f^{ac}$.  
 
 In order to prove that $|f^s|$ is weak-$*$ measurable it suffices to verify that $\left<|f^s|,\varphi\right>$ is measurable for any  $0\leq\varphi\in \Co$. Observe that for such $\varphi$
 \begin{align}
  \left<|f^s(t,\cdot)|,\varphi\right>=&\left\|\varphi f^s(t,\cdot)\right\|_{\RM}\nonumber\\
  =&{\sup}\left\{\left<f^s(t,\cdot),\varphi\psi\right>:\ \psi\in\Co\text{ such that } \|\psi\|= 1\right\}.\label{sup1}
\end{align}
Since $\Co$ is separable, the supremum in \cref{sup1} can be replaced by the supremum over a countable set. Exploiting the weak measurability of $f^s$, we conclude that the function on the right-hand side of \cref{sup1} is measurable. The proof of the Lemma is thus complete. 
\end{proof}

\end{appendices}
\end{document}